\newtheorem{theorem}{Theorem}
\newtheorem{proposition}{Proposition}
\newtheorem{corollary}{Corollary}
\newtheorem{lemma}{Lemma}
\theoremstyle{definition}
\newtheorem{remark}[lemma]{Remark}
\numberwithin{lemma}{section}
\numberwithin{claim}{section}
\numberwithin{equation}{section}
\numberwithin{proposition}{section}
\newcommand{\TR}{\textnormal{T}}
\newcommand{\ID}{\textnormal{Id}}
\newcommand{\RR}{\mathbb{R}}
\newcommand{\bl}{{\boldsymbol\ell}}
\newcommand{\bs}{\boldsymbol\sigma}
\newcommand{\by}{\boldsymbol y}
\newcommand{\bC}{\boldsymbol\chi}
\newcommand{\UN}{\mathbf 1}
\newcommand{\bM}{\mathbf M}
\newcommand{\bR}{\mathbf R}
\newcommand{\bW}{\mathbf W}
\newcommand{\bX}{\mathbf X}
\newcommand{\cB}{\mathcal B}
\newcommand{\cC}{\mathcal C}
\newcommand{\cD}{\mathcal D}
\newcommand{\cE}{\mathcal E}
\newcommand{\cH}{\mathcal H}
\newcommand{\cL}{\mathcal L}
\newcommand{\cM}{\mathcal M}
\newcommand{\cN}{\mathcal N}
\newcommand{\cS}{\mathcal S}
\newcommand{\ep}{\varepsilon}
\newcommand{\qk}{q_k^\alpha}
\newcommand{\BF}{\mathbf f}
\DeclareMathOperator{\DV}{div}
\DeclareMathOperator{\tr}{tr}
\begin{document}
\title[Multi-solitons for the critical wave equation]
{Existence of multi-solitons with any parameters\\
for the $5$D energy critical wave equation}
\author[Y.~Martel]{Yvan Martel}
\address{Laboratoire de mathématiques de Versailles, UVSQ, CNRS, Université Paris-Saclay, 
and Institut Universitaire de France, 45 avenue des États-Unis, 78035 Versailles Cedex, France}
\email{yvan.martel@uvsq.fr}
\author[F.~Merle]{Frank Merle}
\address{AGM - UMR 8088 - Analyse, Géométrie et Modélisation, CY Cergy Paris Université,
CNRS, 95302 Cergy-Pontoise, France 
et Institut des Hautes \'Etudes Scientifiques, France
}
\email{merle@ihes.fr}

\begin{abstract}
For the focusing, energy critical wave equation in dimension $5$,
we construct multi-solitons with any number  of solitons, any choice of signs, speeds, scaling parameters
and translation parameters.
This requires to revisit in depth previous constructions of multi-solitons
based on a unidirectional approach,
 to fully take into account the dimension of the space and the possibility for solitons to move in any direction.

Then, as a consequence of this more general construction and of the arguments developed in \cite{MaM2},
the inelastic nature of any collision of solitons is proved 
under a non-cancellation assumption on the parameters.
\end{abstract}

\maketitle

\section{Introduction}

We consider the focusing energy critical nonlinear wave equation in 5D
\begin{equation}\label{eq:NW}
\partial_t^2 u - \Delta u - |u|^{\frac 4{3}} u = 0, \quad (t,x)\in \RR\times \RR^5.
\end{equation}
Recall that the Cauchy problem for equation~\eqref{eq:NW} is locally well-posed
in the energy space $\dot H^1\times L^2$, using suitable Strichartz estimates. 
See \emph{e.g.}~\cite{KeMe} and references therein.
Note that equation~\eqref{eq:NW} is invariant by the $\dot H^1$ scaling:
if $u$ is a solution of~\eqref{eq:NW}, then for any $\lambda>0$, $u_\lambda$ defined by 
\[
u_\lambda(t,x)=\frac{1}{\lambda^{\frac 32}}u\left(\frac{t}{\lambda},\frac{x}{\lambda}\right),
\]
is also a solution of~\eqref{eq:NW} and satisfies $\|u_\lambda\|_{\dot H^1}=\|u\|_{\dot H^1}$.
The energy 
$E(u(t),\partial_t u(t))$ and the momentum $M(u(t),\partial_t u(t))$
of an $\dot H^1\times L^2$ solution are conserved, where
\[
E(u,v) = \frac 12 \int v^2 + \frac 12 \int |\nabla u|^2- \frac {3}{10} \int |u|^{\frac {10}{3}},
\quad
M(u,v)=\int v\nabla u.
\]
Recall also that the function $W$ defined by
\begin{equation*}
W(x) = \left( 1+ \frac {|x|^2}{15}\right)^{-\frac{3}2},\quad 
\Delta W + W^{\frac 73}=0 , \quad x\in \RR^5,
\end{equation*}
is a stationary solution of~\eqref{eq:NW}, called here ground state, or soliton.
By scaling, translation invariances and change of sign, we obtain a family of stationary solutions
of~\eqref{eq:NW} defined by 
$W_{\lambda,x_0,\pm}(x)=\pm \lambda^{-\frac 32}W\left(\lambda^{-1}(x-x_0)\right)$,
where $\lambda>0$ and $x_0\in \RR^5$.
Using the Lorentz transform, we obtain traveling waves, also called solitons.
Indeed, for any~$\bl\in \RR^5$, with $|\bl|< 1$, let
\begin{equation}\label{eq:Wl}
W_\bl(x)=W\biggl(x+\biggl(\frac{1}{\sqrt{1-|\bl|^2}}-1\biggr)\frac{\bl(\bl\cdot x)}{|\bl|^2}\biggr)
\quad \mbox{and}\quad w_\bl(t,x)=W_\bl(x-\bl t).
\end{equation}
Then, the functions $\pm w_{\bl}$, as well as rescaled
and translated versions of them, are solutions of~\eqref{eq:NW}.

\medbreak 
In this article, we prove a general existence result for multi-solitons of \eqref{eq:NW}.

\begin{theorem}[Multi-solitons with any set of parameters]\label{th:un}
Let $K\geq 2$. For all $k\in \{1,\ldots,K\}$, let $\lambda_k^\infty>0,$ ${\by}_k^\infty\in \RR^5$, $\epsilon_k\in \{\pm1\}$ and $\bl_k\in \RR^5$ with $|\bl_k|<1$
such that for any $k\neq m$, $\bl_k\neq \bl_m$.
Let
\begin{equation*}
W_k^\infty(t,x)= \frac {\epsilon_k}{(\lambda_k^\infty)^{\frac 32}} W_{\bl_k} \left( \frac{x - \bl_k t -{\by}^\infty_k}{\lambda_k^\infty}\right).
\end{equation*}
Then, there exist $T>0$ and a solution $u$ of~\eqref{eq:NW} on $[T,+\infty)$, satisfying
\begin{equation*}
\lim_{t\to +\infty}\biggl\|\nabla_{t,x} u(t) - \sum_{k=1}^K \nabla_{t,x} W_k^\infty(t)\biggr\|_{L^2(\RR^5)} = 0
.
\end{equation*}
\end{theorem}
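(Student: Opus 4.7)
The plan is to follow the backward‑in‑time compactness strategy that is by now standard for multi‑soliton construction: fix a sequence $T_n\to+\infty$, solve \eqref{eq:NW} backward from $T_n$ with Cauchy data $(\tilde W,\partial_t\tilde W)(T_n,\cdot)$, where $\tilde W(t):=\sum_{k=1}^K W_k^\infty(t)$, and extract a weak limit as $n\to\infty$. The core analytical task is a uniform‑in‑$n$ decay bound, for instance
$\|\nabla_{t,x}(u_n-\tilde W)(t)\|_{L^2(\RR^5)}\le C\,t^{-\kappa}$
on some interval $[T,T_n]$ with $T$ independent of $n$. The driving small quantity is the soliton‑interaction residual $S:=f(\tilde W)-\sum_k f(W_k^\infty)$ with $f(u)=|u|^{4/3}u$, whose $L^2$ norm decays in $t$ at a rate governed by $\min_{k\neq m}|\bl_k-\bl_m|>0$ together with the polynomial spatial decay of $W_{\bl}$.

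The heart of the proof is a modulated energy bootstrap. Each Lorentz‑boosted soliton $W_{\bl_k}$ carries scaling, translation and boost symmetries, and a single unstable eigendirection of the elliptic operator $-\Delta-\tfrac{7}{3}W^{4/3}$. I would introduce time‑dependent modulated scaling and translation parameters $\lambda_k(t),\by_k(t)$, write $u_n=\sum_k W_k+\eta$ with $W_k(t)$ the corresponding refined profile, and fix the parameters by imposing orthogonality of $(\eta,\partial_t\eta)$ to the scaling and translation generators of each $W_k$, pulled back to the laboratory frame through the $\bl_k$‑Lorentz transform. Because the solitons separate linearly in $t$, the cross terms in the second variation of the energy around $\sum_k W_k$ are asymptotically negligible, and one obtains local coercivity of this quadratic form on the orthogonal complement of the modulation directions, modulo $K$ scalar components corresponding to the unstable eigendirections of $W_1,\ldots,W_K$. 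These $K$ components are neutralised by the classical Brouwer‑type argument: one perturbs the final datum at $T_n$ within a $K$‑dimensional ball, shows that the flow projected on the unstable modes is strictly outgoing on its boundary, and deduces the existence of at least one good choice of initial perturbation, for which the bootstrap closes and the weak limit yields the desired solution.

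The principal obstacle, as stressed in the abstract, is the full directional freedom of the velocities. Previous unidirectional constructions aligned all $\bl_k$ on a single line, reducing the effective symmetry group of each soliton to a $(1+1)$‑dimensional Lorentz action, so that the orthogonality system and the cross‑soliton estimates could be handled essentially componentwise. Here the boosts attached to different solitons live in genuinely different directions of $\RR^5$, so one has to prove, in a vectorial and frame‑dependent way, that the $6K\times 6K$ modulation matrix remains uniformly invertible despite being built from eigenvectors of distinct Lorentz transforms, and that the symmetry directions of different $W_k$ stay uniformly transverse in $\dot H^1\times L^2$. This directional version of the modulation/coercivity lemma, replacing the scalar algebra of the one‑dimensional setting, is where I would expect the technical work to concentrate; once it is in place, the rest of the argument should follow the familiar pattern of uniform bootstrap plus topological selection plus weak compactness.
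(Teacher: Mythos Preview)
Your overall architecture (backward compactness, modulation, Brouwer selection of the $K$ unstable modes) matches the paper, but you have misidentified the obstruction. The $6K\times 6K$ modulation matrix is \emph{not} the issue: because the solitons separate linearly, that matrix is asymptotically block-diagonal and each block is invertible by standard single-soliton spectral theory, regardless of the directions of the $\bl_k$. Likewise, coercivity of the second variation on the orthogonal complement is routine once the solitons are far apart.

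The genuine difficulty is the \emph{time-derivative} estimate for the energy, and your proposal does not address it. Near soliton $k$ the natural Lyapunov functional is the Lorentz-adapted quantity
\[
\int \bigl(|\nabla\ep|^2+\eta^2+2(\bl_k\cdot\nabla\ep)\eta-\tfrac73 W_k^{4/3}\ep^2\bigr),
\]
whose time derivative vanishes at leading order \emph{only} if the localising weight is itself transported with velocity $\bl_k$. In the collinear case one can patch these local functionals together with a single monotone cutoff in the common direction; this is exactly the ``unidirectional'' mechanism of the earlier constructions, and it breaks down as soon as the $\bl_k$ are not aligned, because no single scalar cutoff can be comoving with all solitons simultaneously. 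A bootstrap based on the plain second variation of the energy, as you describe, reproduces those earlier arguments and fails for the same reason.

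The paper's resolution is a genuinely new weighted functional
\[
\cH=\int \rho\bigl(|\nabla\ep|^2+\eta^2-\cdots\bigr)+2\int(\bC\cdot\nabla\ep)\,\eta+4\int\Phi\,\ep\eta,
\]
where $\rho(t,x)\sim\max(t,|x|)$ and $\bC(t,x)$ is a vector field engineered to equal $\bl_k t$ near the $k$-th soliton and $x$ far away. The interplay of the virial-type term $\int(\bC\cdot\nabla\ep)\eta$ with the correction $4\int\Phi\ep\eta$ produces, after delicate square-completion using $\operatorname{div}(x/|x|)=4/|x|$ in $\RR^5$, the differential inequality $\cH'+\tfrac{2}{L^2 t}\cH\gtrsim -t^{-6+3\delta}$, which closes the bootstrap at the rate $\cN\lesssim t^{-5/2+2\delta}$. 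This construction, not the modulation algebra, is where the work concentrates; without it your scheme would stall at the same point as the previous papers.
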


Theorem \ref{th:un} means that pure asymptotic multi-solitons, in one sense of time, exist for any number of solitons and any choice of 
(two by two distinct) speeds, signs, scaling parameters and translation parameters.

\medskip

Compared to some other nonlinear models with solitons, an important difficulty to prove such a result for the critical wave equation \eqref{eq:NW} comes from the 
algebraic decay of the solitons as $|x|\to +\infty$, which yields nonlinear interactions of order $t^{-3}$
as $t\to +\infty$. For example, 
for the generalized Korteweg-de Vries, the nonlinear Schrödinger equations and the nonlinear Klein-Gordon equation
considered in \cite{CoMu,Ma05,MaM0}, the exponential decay of the solitons facilitates the construction
of multi-solitons.

As a consequence, the first existence result proved in \cite{MaM1}
for equation \eqref{eq:NW} was restricted
to the following configurations:
\begin{itemize}
	\item Any number of solitons, but with the condition that the speeds $\bl_k$ are all collinear to one fixed vector;
	\item Two solitons with any different speeds; this was obtained as a consequence of the fact
	that with only two solitons, one can use the Lorentz transformation to reduce to the previous case of collinear speeds.
\end{itemize}
These restrictions were due to the method of proof, based on a unidirectional approach, like the one
developed for the generalized Korteweg-de Vries equation in~\cite{Ma05,MaMT} for instance.

Another configuration was treated in \cite{Yu19}, proving the existence of multi-solitons
with any number of solitons with different speeds, under the following explicit
smallness condition on the speeds
\[
\sum_{j=1,\ldots,5} \max_{k=1,\ldots,K} \bl_{k,j}^2 \leq \frac 9{25}.
\]

In the present article, we revisit the construction technique, introducing
a new energy functional that fully takes into account the dimension of the space
and the possibility for solitons to move in any direction.
For the reader convenience, we provide in \S\ref{s:3.3} a heuristic presentation of the method.

\medbreak

As a consequence of the proof of the existence result,
we extend the inelasticity result \cite[Theorem 1]{MaM2}
to any configuration of multi-solitons, 
under a general non-vanishing assumption.
In \cite{MaM2}, the result was restricted to the configurations for which the construction was possible
(with a non-vanishing assumption).

\begin{corollary}[Inelasticity and dispersion as $t\to-\infty$]\label{th:dx}
Let $K\geq 2$. For all $k\in \{1,\ldots,K\}$, let $\lambda_k^\infty>0,$ ${\by}_k^\infty\in \RR^5$, $\epsilon_k\in \{\pm1\}$ and $\bl_k\in \RR^5$ with $|\bl_k|<1$ such that
for any $k\neq m$, $\bl_k\neq \bl_m$.
Assume that
\begin{equation}\label{eq:nv}
\sum_{k=1}^K (1-|\bl_k|^2)^{\frac 32}\lambda_k^\infty\Biggl(\sum_{m\neq k}\epsilon_{m}(\lambda_{m}^\infty)^{\frac 32}|\bs_{k,m}|^{-3}\Biggr)\neq 0
\end{equation}
where
\begin{equation}\label{eq:bb}
\bs_{k,m} = \bl_k-\bl_m+
\left(\frac 1{\sqrt{1-|\bl_m|^2}} - 1\right) \frac{\bl_m (\bl_m\cdot(\bl_k-\bl_m))}{|\bl_m|^2}.
\end{equation}
Then, there exists $C>0$ such that the solution $u$ constructed in Theorem \ref{th:un}
satisfies, for any $A>0$ sufficiently large,
\begin{equation}\label{eq:di}
\liminf_{t\to -\infty} \|\nabla u(t)\|_{L^2(|x|>|t|+A)} \geq {C A^{-\frac 52}}.
\end{equation}
\end{corollary}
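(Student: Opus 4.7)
\emph{Plan of proof.} The strategy is to bootstrap the inelasticity argument of \cite[Theorem 1]{MaM2} onto the enhanced multi-soliton construction provided by Theorem \ref{th:un}. The point is that the proof in \cite{MaM2} is essentially local in the parameter configuration: once one has a sufficiently sharp description of $u(t)$ near $t=+\infty$ that isolates the leading-order $t^{-3}$ interaction term, together with the coercive energy functional of \S\ref{s:3.3} controlling the remainder, the same wave/ODE analysis transferring radiation to the region $|x|>|t|+A$ can be carried out.

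First, building on the construction of Theorem \ref{th:un}, I would extract a refined asymptotic of the form
\[
u(t)=\sum_{k=1}^{K}W_k^\infty(t)+z(t)+r(t),\qquad t\to+\infty,
\]
where $z(t)$ is the explicit order-$t^{-3}$ correction generated by the pairwise nonlinear interactions of the solitons, and $r(t)$ is a higher-order error whose $\dot H^1\times L^2$ norm is controlled by the energy functional introduced for Theorem \ref{th:un}. The inhomogeneous source producing $z$ consists, to leading order, of the sum over pairs $(k,m)$ of terms of the type $|W_m^\infty|^{4/3}W_k^\infty$ evaluated in the far field of soliton $m$. Using the pointwise decay $W(y)\sim 15^{3/2}|y|^{-3}$ and the explicit form \eqref{eq:Wl} of $W_\bl$, the evaluation of soliton $m$ at the position of soliton $k$ produces precisely the prefactor $(1-|\bl_k|^2)^{3/2}\lambda_k^\infty(\lambda_m^\infty)^{3/2}|\bs_{k,m}|^{-3}$ appearing in \eqref{eq:nv}, where $\bs_{k,m}$ is the relative position seen in the rest frame of soliton $m$ after the Lorentz boost, hence the explicit formula \eqref{eq:bb}.

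Next, $z(t)$ solves a linear wave equation with an explicit source of size $t^{-3}$ concentrated along the soliton trajectories. By Duhamel and the 5D fundamental solution of the linear wave equation, the leading free-wave component of $z$ propagates outward along the light cone; the non-cancellation hypothesis \eqref{eq:nv} is precisely what guarantees that the total radiated coefficient does not vanish, so that this free wave persists. Extending $u$ backward in time (possible in the region $|x|>|t|+A$ by finite speed of propagation, well outside the soliton cores), the standard pointwise bound for the free wave in $\RR^5$ then yields, for $A$ large,
\[
\liminf_{t\to-\infty}\|\nabla u(t)\|_{L^2(|x|>|t|+A)}\geq C\,A^{-\frac 52},
\]
as claimed.

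The principal obstacle, and the reason why this extension was not available in \cite{MaM2}, is carrying out the refined asymptotic when the speeds $\bl_k$ are not collinear: each soliton must be analysed in its own Lorentz rest frame, and the contributions from the other $K-1$ solitons arrive from directions that are not aligned with each other, forcing the twisted relative position $\bs_{k,m}$ in \eqref{eq:bb}. This is exactly what the multi-directional energy method behind Theorem \ref{th:un} is designed to handle; once the coefficient in \eqref{eq:nv} is identified and the remainder $r$ is controlled, the transfer of radiation outside the light cone and the final lower bound \eqref{eq:di} follow from the arguments of \cite{MaM2} with only cosmetic modifications.
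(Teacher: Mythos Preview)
Your high-level strategy is right: the corollary does follow by plugging the refined construction (what the paper states as Proposition~\ref{pr:S1}) into the inelasticity machinery of \cite{MaM2}. But the mechanism you sketch for the final step is not the one that actually works, and the gap is substantive.

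You describe the endgame as ``Duhamel and the 5D fundamental solution'' plus ``the standard pointwise bound for the free wave''. That is not how the lower bound \eqref{eq:di} is obtained. The actual argument rests on the \emph{exterior channels of energy} dichotomy for radial linear waves in $\RR^5$ (Proposition~\ref{pr:ch}, from \cite{KeLS}): for any radial $(U_0,U_1)$ and any $R>0$, at least one of the two time directions carries a fixed fraction of $\|\pi_R^\perp(U_0,U_1)\|_{(\dot H^1\times L^2)(|x|>R)}$ outside the cone $|x|>|t|+R$, where $\pi_R^\perp$ projects off $\mathrm{span}\{(|x|^{-3},0),(0,|x|^{-3})\}$. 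Two ingredients you omit are essential here. First, the solution is not radial, so one passes to the spherical average $U_{\rm L}$ of $u_{\rm L}-\sum_k w_{\beta_k,\rm L}$; this radial reduction is what makes Proposition~\ref{pr:ch} applicable. Second, the projection $\pi_R^\perp$ kills exactly the leading $|x|^{-3}$ tail of each soliton, so the solitons themselves contribute nothing; the nonzero lower bound comes from the \emph{next} order, namely the explicit $|x|^{-4}$ asymptotics of the correction $v_{\bl_k}$ built in Lemma~\ref{le:as}. That is where the $R^{-5/2}$ exponent and the constant $\Psi$ in \eqref{eq:nv} arise (Lemma~\ref{le:UL}), and your identification of the coefficient as coming directly from ``evaluation of soliton $m$ at the position of soliton $k$'' misses this: the $c_k$ of Lemma~\ref{le:in} accounts for the $\epsilon_m(\lambda_m^\infty)^{3/2}|\bs_{k,m}|^{-3}$ factor, but the $(1-|\bl_k|^2)^{3/2}\lambda_k^\infty$ weight enters through the asymptotic constant of $v_{\bl_k}$.

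Finally, the channel estimate gives only a dichotomy. To force the energy into the \emph{backward} cone one uses that $u$ is a pure multi-soliton as $t\to+\infty$, hence dispersionless there; this rules out the forward alternative. Your sketch does not isolate this step. In short: the reduction to \cite{MaM2} is correct, but the tools you name (Duhamel, pointwise bounds) are not the ones doing the work, and a reader following your outline would not find a proof.
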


The solution constructed in Theorem~\ref{th:un} exists at least on a time interval of the form $[T,+\infty)$ and it has an exact multi-soliton behavior as $t\to +\infty$.
In particular, it is ``dispersion-less'' at~$+\infty$.
Corollary \ref{th:dx} investigates the behavior of that particular solution in the other sense of time.
Note that the constructed solution does not necessarily exist for all $t<T$
(not all solutions of \eqref{eq:NW} are global).
However, by the finite speed of propagation and the small data Cauchy theory, it can be extended uniquely as a solution of~\eqref{eq:NW} for all $(t,x)\in \RR\times\RR^5$ in the region $|x|>|t|+A$, provided that $A$ is large enough (see section \ref{S:5} for a justification of this claim).

Corollary~\ref{th:dx} gives a lower bound on the amount of energy lost
as dispersion in this region as $t\to -\infty$. In particular, it implies that
the solution does not have a pure multi-soliton behavior as $t\to-\infty$
(even with different parameters).
This inelasticity result is proved using the 
strategy developed in \cite{MaM2}, based on the theory of channels of energy for the linear wave equation,
applied to the radial part of the solution.
We refer to \cite{DKM4} for the first introduction of this method 
and to~\cite{KeLS} for the exact result to be used in the present paper
(see also Proposition \ref{pr:ch}).
Here, the non-vanishing assumption on the parameters \eqref{eq:nv} is needed to avoid that,
at the main order, the sum of the signed, radial channeling components
due to each two by two collision of solitons vanishes.

\begin{remark}
The key arguments to prove the inelasticity are taken from \cite{MaM2}, and the improvement
of Corollary~\ref{th:dx} compared to \cite[Theorem 1]{MaM2} is due to the general construction method
of Theorem 1.
Recall that the proof of inelasticity requires a refined approximate solution (see Lemma \ref{le:43}),
so that the main order of the nonlinear interactions between the solitons is computed and then
justified on the multi-soliton solution. 
Indeed,
a more precise and technical result than Theorem \ref{th:un}, 
stated as Proposition \ref{pr:S1} below, is needed to prove Corollary~\ref{th:dx}.

If one does not intend to prove any inelasticity result, the construction method 
used in the proof of Theorem \ref{th:un} can be simplified, 
with no need of refined approximate solution. 
Such a simplified construction based on the energy method of the present paper
is expected to be accessible for any dimension $d\geq 6$.
The technique introduced in the present paper should also allow to improve other results
on the critical wave equation, such as \cite{Yu21,Yu22}.
\end{remark}

\begin{remark}
For references on the collision of solitons for other dispersive or wave models, 
including the case of integrable models, we refer to the introductions of \cite{MMan,MMin,MaM2}.
Following the discussion in \cite{MaM2}, one expects that understanding the global behavior of multi-solitons will enlighten the soliton resolution conjecture; see \cite{DKM4,DKM6,DKM7,DJKM,JeLa,JL25}
on this subject.

Apart from the integrability theory, there are few results dealing with the question of the collision of
solitons. 
In the case of 2-solitons with specific configurations of speeds, we refer to \cite{MMan,MMin,Pe11} for the generalized Korteweg-de Vries equation
and the one dimensional nonlinear Schrödinger equation.
For the energy critical wave and wave maps equations
with symmetry assumptions,
we refer to \cite{Je19,JeLa} proving, among other results, the existence and the classification (globally in time) of 2-soliton configurations.
\end{remark}

\subsection*{Notation}
In this paper, $i,j\in \{1,\ldots,5\}$ are related to the coordinates of the space $\RR^5$, while $k,m\in \{1,\ldots,K\}$ are related to soliton numbering.
Moreover, $\sum_k=\sum_{k=1}^K$ and $\sum_j = \sum_{j=1}^5$.
We will also denote $\partial_j = \partial/\partial x_j$.
We denote by $\UN_\Omega$ the indicator function of a subset $\Omega$ of $\RR^5$.
We denote $f(u) = |u|^\frac43 u$ and $f'(u)=\frac73 |u|^\frac43$.
Let
\[
\Lambda = \frac 32 + x \cdot \nabla,\quad 
\cL = -\Delta -\frac 73 W^\frac 43.
\]
We consider a radial $\cC^\infty$ function $\varphi:\RR^5\to[0,1]$, such that
$\varphi(x)=1$ for $|x|<1$ and $\varphi(x)=0$ for $|x|>2$.
Let $0< \alpha<1$ small to be chosen. Define
\[
\Theta (t,x) =
\begin{cases} 
1 & \quad \mbox{if $|x|<L t$}\\
\left( {Lt}/|x|\right)^{\alpha} & \quad \mbox{if $|x|\geq Lt$}
\end{cases}
\]
and
\[
\rho(t,x) = \begin{cases} 
t & \mbox{if $|x|<Lt$}\\
t^ \alpha \left(|x|/L\right)^{1- \alpha} & \mbox{if $|x|\geq L t$}
\end{cases}
\]
Observe that the functions $\Theta$ and $\rho$ are continuous
and piecewise $\cC^1$ on $I\times \RR^5$. Moreover, it holds on $\RR^5$,
$0\leq \Theta\leq 1$ and $\rho\geq t$.
We also introduce the notation
\[
\|f\|_\rho = \left(\int \rho f^2 \right)^\frac12.
\]
For $\bl\in \RR^5$ with $|\bl|<1$, we also define 
the operators 
\[
A_\bl=\partial_t +\bl \cdot \nabla,\quad 
\cL_\bl = -\Delta +\bl\cdot(\bl\cdot\nabla) -\frac 73 w_\bl^{\frac 43}.
\]
In the framework of Theorem \ref{th:un}, we define
\[
\bar \ell = \max (|\bl_1|,\ldots,|\bl_K|), \quad 0<\bar \ell <1,
\]
\begin{equation}\label{eq:sg}
\sigma = \frac 1{100} \min ( 1-\bar\ell, |\bl_k-\bl_m| ; k,m\in \{1,\ldots, K\}, k\neq m)>0,
\quad L= 1-\sigma.
\end{equation}
In the proof of Theorem~\ref{th:un}, we will use small parameters, like 
$\alpha>0$ and $\delta>0$ ($\delta$ will taken ultimately to be equal to
$\alpha/4$ and $\alpha$ will be taken small enough), and a large constant $T_0>1$ (to be chosen depending on $\delta$ and $\alpha$).
We will denote by $C$ a positive constant that may change from line to
line and that is independent of $\alpha$, $\delta$ and $T_0$
(such a constant may depend on the parameters of the solitons). The notation $A\lesssim B$ means that $A\leq C B$ for such a constant $C$. 
To indicate the dependence on a small parameter $\delta>0$, we will 
use the notation $A\lesssim_\delta B$.
Note that the parameter $\sigma>0$ has been fixed in \eqref{eq:sg}, but
it could have been taken arbitrarily small.

\section{Soliton interactions}

\subsection{Non homogeneous linearized equation}

Let $\bl\in \RR^5$ be such that $|\bl|<1$.
Let the functions $F$ and $G$ be defined by
\begin{equation}\label{eq:FG}
\begin{aligned}
&F = W^{\frac 43} + \kappa_\bl \Lambda W , \quad 
G = (1-\bl^2)^{-\frac 12} \kappa_\bl \left(\bl\cdot\nabla \Lambda W\right),\\
&\kappa_{\bl} = - (1-\bl^2) \frac{(W^{\frac 43},\Lambda W)}{\|\Lambda W\|_{L^2}^2}>0 .
\end{aligned}
\end{equation}
Set
\[
\zeta_\bl(t,x) = x-\bl t 
+ \biggl(\frac{1}{\sqrt{1-|\bl|^2}}-1\biggr) \frac{\bl(\bl\cdot (x-\bl t))}{|\bl|^2},
\]
and 
\[
f_\bl (t,x) = t^{-3} F ( \zeta_\bl(t,x)),\quad
g_\bl (t,x) = t^{-2} G ( \zeta_\bl(t,x) ).
\]
We reformulate Lemma 3.1 in \cite{MaM2} as follows.
\begin{lemma}\label{le:as}
There exists a smooth function $v_\bl:(t,x)\in [1,+\infty)\times \RR^5\mapsto v_\bl(t,x)$ such that, for all $0<\delta<1$, 
 for all $m\geq 0$,  $t\geq 1$, $x\in \RR^5$,
\begin{equation}\label{eq:Av}
\begin{aligned}
|A_\bl^m v_\bl(t,x)| \lesssim_\delta (t+\langle \zeta_\bl\rangle)^{-1} t^{-(1+m)} \langle \zeta_\bl\rangle^{-2+\delta}\\
|A_\bl^m\nabla v_\bl(t,x)| \lesssim_\delta (t+\langle \zeta_\bl\rangle)^{-1}t^{-(1+m)} \langle \zeta_\bl\rangle^{-3+\delta}\\
|A_\bl^m\nabla\nabla v_\bl(t,x)| \lesssim_\delta (t+\langle \zeta_\bl\rangle)^{-1}t^{-(1+m)} \langle \zeta_\bl\rangle^{-4+\delta}
\end{aligned}
\end{equation}
and
\begin{equation}\label{eq:AE}\begin{aligned}
|A_\bl^m \cE_{\bl}(t,x)|&\lesssim_\delta t^{-(4+m)+\delta} \langle \zeta_\bl\rangle^{-3}\\
|A_\bl^m\nabla \cE_{\bl}(t,x)|&\lesssim_\delta t^{-(4+m)+\delta} \langle \zeta_\bl \rangle^{-4}
\end{aligned}\end{equation}
where
\[
\cE_\bl=
\partial_t^2 v_\bl - \Delta v_\bl - \frac 73 w_\bl^{\frac 43} v_\bl 
- f_\bl - g_\bl.
\]
\end{lemma}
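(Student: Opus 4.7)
The plan is to work in the rest frame of the soliton by a Lorentz boost, thereby reducing the problem to an elliptic hierarchy for $\cL$, and then to estimate the residual. The statement is a reformulation of Lemma 3.1 of \cite{MaM2}, so I would essentially follow that construction and repackage the estimates in the weighted form \eqref{eq:Av}--\eqref{eq:AE}.

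First, I introduce the boosted coordinates $(\tau, y) = (\gamma(t - \bl\cdot x), \zeta_\bl(t,x))$ with $\gamma = (1-|\bl|^2)^{-1/2}$. Lorentz invariance of $\partial_t^2 - \Delta$ and the identity $w_\bl(t,x) = W(y)$ give
\[
(\partial_t^2 - \Delta - \tfrac{7}{3} w_\bl^{4/3}) v_\bl = (\partial_\tau^2 + \cL)\tilde v_\bl,
\]
where $\tilde v_\bl(\tau,y) = v_\bl(t,x)$. Since $t = \gamma(\tau + \bl\cdot y)$, writing $R := \tau + \bl\cdot y$, the source reads
\[
f_\bl + g_\bl = \gamma^{-3} R^{-3} F(y) + \gamma^{-2} R^{-2} G(y).
\]
I then pose the geometric ansatz $\tilde v_\bl = R^{-2}\Phi_G + R^{-3}\Phi_F$. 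A direct computation gives
\[
(\partial_\tau^2 + \cL)(R^{-k}\Phi) = R^{-k}\cL\Phi + 2kR^{-k-1}\bl\cdot\nabla\Phi + k(k+1)(1-|\bl|^2)R^{-k-2}\Phi,
\]
so matching orders yields the elliptic problems $\cL\Phi_G = \gamma^{-2}G$ and $\cL\Phi_F = \gamma^{-3}F - 4\bl\cdot\nabla\Phi_G$.

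Next I would verify solvability against $\ker\cL = \mathrm{span}\{\Lambda W, \partial_1 W, \ldots, \partial_5 W\}$: orthogonality of $G$ to $\Lambda W$ follows from $(\bl\cdot\nabla\Lambda W)\Lambda W = \tfrac{1}{2}\bl\cdot\nabla(\Lambda W)^2$, and to $\partial_j W$ from the radial identity $\int \Lambda W\,\partial_i\partial_j W = \tfrac{1}{5}\delta_{ij}\int \Lambda W\,\Delta W = 0$ (Pohozaev, using $\int \Lambda W\,W^{7/3}=0$). Orthogonality of the $\Phi_F$ equation against $\partial_j W$ comes from radial/parity considerations on $F$ and the vector symmetry of $\Phi_G$ inherited from $\bl$. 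Orthogonality of the $\Phi_F$ equation against $\Lambda W$ reduces to a scalar identity linking $(W^{4/3}, \Lambda W)$, $\|\Lambda W\|^2$ and the quadratic form $(\tilde A, \cL \tilde A)$ with $\cL \tilde A = \bl\cdot\nabla\Lambda W$; the factor $(1-|\bl|^2)$ in the definition of $\kappa_\bl$ in \eqref{eq:FG} is precisely what calibrates this balance. Having solvability, I invert $\cL$ on $(\ker\cL)^{\perp}$ by spectral theory; using $F,G = O(\langle y\rangle^{-4})$ (inherited from $\Lambda W = O(\langle y\rangle^{-3})$) and the $\RR^5$ Green function of $-\Delta$ (which gains two powers of $\langle y\rangle$ modulo a $\delta$-loss in weighted spaces), one obtains $|\Phi_{G,F}| \lesssim_\delta \langle y\rangle^{-2+\delta}$, with the corresponding $\langle y\rangle^{-3+\delta}$ and $\langle y\rangle^{-4+\delta}$ bounds for the first and second derivatives.

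Returning to $(t,x)$-variables, one takes $v_\bl(t,x) = t^{-2}\gamma^2\Phi_G(\zeta_\bl) + t^{-3}\gamma^3\Phi_F(\zeta_\bl)$, smoothly cut off in the region where $R$ could be small relative to $|y|$. The key observation $A_\bl \zeta_\bl = 0$ (from $\zeta_\bl = M(x - \bl t)$ with $M\bl = \gamma\bl$) shows that $A_\bl$ acts only on the explicit $t$-factors, gaining one $t^{-1}$ per application. Combining this with the $\Phi$-bounds and the fact that the mixed weight $(t+\langle\zeta_\bl\rangle)^{-1}$ interpolates between the regimes $|\zeta_\bl| \lesssim t$ (where $R \sim t/\gamma$) and $|\zeta_\bl| \gtrsim t$ (where $R \sim \bl\cdot y/\gamma$) yields \eqref{eq:Av}. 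Finally the residual $\cE_\bl$ consists of the $R^{-4}$ and $R^{-5}$ terms generated by the identity above, each contributing at most $t^{-4+\delta}\langle\zeta_\bl\rangle^{-3}$, giving \eqref{eq:AE}. The main obstacle I expect is the delicate solvability at order $R^{-3}$, which forces the specific $(1-|\bl|^2)$ factor in $\kappa_\bl$, together with the sharp weighted $\cL^{-1}$-estimates producing the $\delta$-loss; both are handled in the proof of \cite[Lemma 3.1]{MaM2}.
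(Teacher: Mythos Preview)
The paper does not prove this lemma: it is stated as a reformulation of \cite[Lemma~3.1]{MaM2}, and the subsequent remark recalls that the function $v_\bl$ is ``constructed explicitly by induction in \cite{MaM2}''. So your deferral to \cite{MaM2} matches what the paper does, and your setup via the Lorentz boost, the identity for $(\partial_\tau^2+\cL)(R^{-k}\Phi)$, and the solvability discussion against $\ker\cL$ all capture the right mechanism.

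However, the specific two-term ansatz you propose does \emph{not} close, and your final paragraph makes an incorrect claim. The source $F=W^{4/3}+\kappa_\bl\Lambda W$ is dominated at infinity by $\Lambda W\sim c|y|^{-3}$, not $|y|^{-4}$; inverting $\cL$ (which at infinity is $-\Delta$ on $\RR^5$) on a $|y|^{-3}$ right-hand side yields $\Phi_F\sim|y|^{-1}$, not $\langle y\rangle^{-2+\delta}$. Even accepting $\Phi_G\sim\langle y\rangle^{-2}$, the $R^{-4}$ residual you produce is then of size $t^{-4}\langle y\rangle^{-2}$ (from the $(1-|\bl|^2)\Phi_G$ and $\bl\cdot\nabla\Phi_F$ contributions), and $t^{-4}\langle y\rangle^{-2}$ is \emph{not} bounded by $t^{-4+\delta}\langle y\rangle^{-3}$ in the region $\langle y\rangle\gtrsim t^{\delta}$. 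So the bound \eqref{eq:AE} fails for your ansatz, and for the same reason your $v_\bl$ does not satisfy \eqref{eq:Av} in the far region $|y|\gg t$.

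This is precisely why the construction in \cite{MaM2} is an \emph{induction}: one keeps adding terms $R^{-j}\Phi_j$, each $\Phi_j$ having worse spatial behaviour ($\Phi_j\sim\langle y\rangle^{j-4}$), and the expansion is only useful in the inner region $|y|\lesssim t$; a separate treatment (matching/cutoff) is needed in the exterior. Your sketch identifies the first two steps and the orthogonality obstructions correctly, but stopping at order $R^{-3}$ and asserting the residual already has the $\langle y\rangle^{-3}$ decay is a genuine gap. If you want to keep this as a sketch, replace the last two sentences by the honest statement that the two-term residual is $O(t^{-4}\langle y\rangle^{-2})$ and that reaching \eqref{eq:AE} requires the full inductive expansion and outer-region analysis carried out in \cite[\S3]{MaM2}.
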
 
\begin{remark}
Lemma \ref{le:as} means that asymptotically in large time, the function $v_\bl$ is an approximate solution of the problem $\cE_\bl=0$
at a given order of $t^{-1}$. This function, which is constructed explicitly by induction in \cite{MaM2}, will allow us to keep track of the nonlinear interaction between the solitons.
Indeed, the motivation for solving the specific non homogeneous problem 
of Lemma \ref{le:as} comes from the explicit interaction terms
$c_k t^{-3} |W_k|^{4/3}$ computed in Lemma \ref{le:in} below. 
The additional scaling terms in $F$ and $G$ involving the constant $\kappa_\ell$ are introduced for the problem
to have a relevant solution. This spectral constraint is due to the direction $\Lambda W$ of the kernel 
of the operator $\cL$.
\end{remark}

\subsection{Approximate multi-soliton}\label{S:2.2}

Let $C_0 > 1$ and $T_0> 1$ large to be fixed and $I\subset [T_0,+\infty)$ be an interval of $\RR$. 
We consider $\cC^1$ functions $\lambda_k>0$, $\by_k\in \RR^5$ defined on $I$.
We assume that these functions satisfy, for all $t\in I$,
\begin{equation}\label{eq:bs}
|\lambda_k(t)-\lambda^\infty_k|+|\by_k(t)-\by_k^\infty| \leq C_0 t^{-1}.
\end{equation}
Set
\[
\cD = \sum_k\left(|\dot{\lambda}_k|+|\dot{\by}_k|\right).
\]
For $\vec G= (G,H)$, define
\[(\theta_k G)(t,x) = \frac {\epsilon_k}{\lambda_k^{\frac 32}(t)} G\left(\frac {x -\bl_k t - \by_k(t)}{\lambda_k(t)}\right),
\quad
\vec \theta_k \vec G = \begin{pmatrix}\theta_k G \\[.2cm] \displaystyle \frac {\theta_k} {\lambda_k} H \end{pmatrix},\quad
\vec {\tilde \theta}_k \vec G = \begin{pmatrix}\displaystyle \frac {\theta_k}{\lambda_k} G\\[.4cm] \theta_k H \end{pmatrix}.
\]
In particular, set
\begin{equation}\label{eq:Wk}
W_k = \theta_k W_{\bl_k} ,\quad X_k=-\bl_k\cdot \nabla W_k,\quad 
\vec W_k = \begin{pmatrix} W_k \\[.2cm] \displaystyle X_k \end{pmatrix}.
\end{equation}
Let
\begin{equation}\label{eq:qk}
q_k(t,x) = (1+| x- \bl_k t - \by_k^\infty|^2)^{-\frac12},\quad
q =\sum_k q_k.
\end{equation}
Set (the constant $\sigma$ has been fixed in \eqref{eq:sg})
\[
B_k(t)=\left\{x \in \RR^5: |x-\bl_k t - \by_k^\infty|<\sigma t\right\},\quad 
 \Omega(t) = \RR^5 \setminus \cup_k B_k(t).
\]

\begin{lemma}\label{le:in}
For $m\neq k$, set
\[
\bs_{k,m} = 
\biggl(\frac 1{\sqrt{1-|\bl_m|^2}} - 1\biggr) \frac{\bl_m (\bl_m\cdot(\bl_k-\bl_m))}{|\bl_m|^2}
+\bl_k-\bl_m
\]
and
\[
c_{k} = \frac 73(15)^{\frac 32} \sum_{m\neq k}{\epsilon_m(\lambda_m^\infty)^{\frac 32}}{|\bs_{k,m}|^{-3}}.
\]
Then,
\[
f\Bigl(\sum_k W_k\Bigr)- \sum_{k} f(W_k) =
t^{-3}\sum_{k} c_k |W_k|^{\frac 43}+\bR_{\Sigma}
\]
where, for all $t\in I$, $x\in \RR^5$,
\[
 |\bR_{\Sigma}| \lesssim t^{-4} q^3,\quad
 |\nabla \bR_\Sigma|\lesssim t^{-4} q^4.
\]
\end{lemma}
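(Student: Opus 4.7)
\medskip
\noindent\textit{Proof plan.}
The plan is to split $\RR^5$ into the regions $B_k$ and $\Omega$ and estimate the remainder $\bR_\Sigma$ separately in each. The heuristic is familiar: on $B_k$, the soliton $W_k$ is of order one near its center while every $W_m$, $m\neq k$, is small of order $t^{-3}$, since the soliton centers are separated by $|(\bl_k-\bl_m)t|\sim t$ and the ground state has algebraic tail $W(z)\sim 15^{3/2}|z|^{-3}$; on $\Omega$, every $W_k$ is small of order $q^3\lesssim t^{-3}$. This dichotomy will produce both the pointwise bound $|\bR_\Sigma|\lesssim t^{-4}q^3$ and the gradient bound $|\nabla\bR_\Sigma|\lesssim t^{-4}q^4$, the latter costing one extra factor of $q$.

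The key pointwise input on $B_k$ is a tail expansion of $W_m$ for $m\neq k$. Writing $y=x-\bl_k t-\by_k^\infty$ so that $|y|<\sigma t$, an explicit computation of the Lorentz-boosted argument using \eqref{eq:Wl} together with the definition of $\bs_{k,m}$ in \eqref{eq:bb} gives
\[
\frac{\zeta_{\bl_m}(t,x)}{\lambda_m(t)} = \frac{t}{\lambda_m(t)}\,\bs_{k,m} + O(1+|y|);
\]
expanding $W(Z)=15^{3/2}|Z|^{-3}+O(|Z|^{-5})$, replacing $\lambda_m(t)$ and $\by_m(t)$ by their limits modulo $O(t^{-1})$ via \eqref{eq:bs}, and using $|\nabla W|\lesssim\langle\cdot\rangle^{-4}$ then yields
\[
W_m(t,x)=\epsilon_m(15)^{3/2}(\lambda_m^\infty)^{3/2}|\bs_{k,m}|^{-3}\,t^{-3}+O\bigl((1+|y|)\,t^{-4}\bigr),
\]
together with $|\nabla W_m|\lesssim t^{-4}$ on $B_k$. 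With this in hand, set $v_k:=\sum_{m\neq k}W_m=O(t^{-3})$; Taylor expanding with $|f''|\lesssim|\cdot|^{1/3}$ gives
\[
f\Bigl(\sum_k W_k\Bigr)-f(W_k)=f'(W_k)v_k+O\bigl(|W_k|^{1/3}v_k^2\bigr),
\]
and subtracting $\sum_{m\neq k}f(W_m)=O(t^{-7})$ while replacing $v_k$ by its constant-in-$x$ leading value extracts exactly the announced main term $c_k t^{-3}|W_k|^{4/3}$. The remainder on $B_k$ is then controlled via the weights $|W_k|\lesssim q_k^3$, $(1+|y|)\lesssim q_k^{-1}$ and $q_k\gtrsim t^{-1}$, giving
\[
|W_k|^{4/3}(1+|y|)\,t^{-4}\lesssim q_k^3\,t^{-4},\qquad |W_k|^{1/3}v_k^2\lesssim q_k\,t^{-6}\lesssim q_k^3\,t^{-4}.
\]

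On $\Omega$, $q_k\lesssim t^{-1}$ uniformly, $|W_k|\lesssim q_k^3$, and the elementary inequality
\[
\Bigl|f\Bigl(\sum_k W_k\Bigr)-\sum_k f(W_k)\Bigr|\lesssim\Bigl(\sum_k|W_k|\Bigr)^{7/3}\lesssim q^7=q^3\cdot q^4\lesssim t^{-4}q^3
\]
closes the pointwise estimate, the extracted main term $t^{-3}\sum_k c_k|W_k|^{4/3}\lesssim t^{-3}q^4\lesssim t^{-4}q^3$ being itself remainder-sized on $\Omega$. The gradient bound $|\nabla\bR_\Sigma|\lesssim t^{-4}q^4$ comes from the same chase carried out with one derivative: differentiating $|W_k|^{4/3}$ gains one factor of $q_k$ through $|\nabla W_k|\lesssim q_k^4$ and $|W_k|^{1/3}\lesssim q_k$, so (using $q_k(1+|y|)\lesssim 1$ on $B_k$) each remainder weight $q_k^3 t^{-4}$ is upgraded to $q_k^4 t^{-4}$; on $\Omega$ the same $q\lesssim t^{-1}$ inequality closes the estimate. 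The main obstacle is the pointwise expansion of $W_m$ on $B_k$: the error $(1+|y|)t^{-4}$ degenerates to $t^{-3}$ near the boundary of $B_k$ where $|y|\sim\sigma t$, comparable with the main term, and the estimate only closes once multiplied by the soliton weight $|W_k|^{4/3}\sim q_k^4$, which exactly compensates $(1+|y|)$ via $q_k\lesssim(1+|y|)^{-1}$.
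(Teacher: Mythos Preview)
Your proof is correct and follows essentially the same approach as the paper: split into the regions $B_k$ and $\Omega$, Taylor-expand $f$ around the dominant soliton $W_k$ on $B_k$, and extract the leading constant from the far-field expansion of $W_m$ ($m\neq k$) via the algebraic tail $W(Z)=15^{3/2}|Z|^{-3}+O(|Z|^{-5})$ and the Lorentz variable, which produces exactly $\bs_{k,m}$. The paper phrases the expansion on $B_k$ slightly differently---it first compares $W_m(t,x)$ to its value at the center $\bl_k t+\by_k^\infty$ and then expands there---but this is equivalent to your single-step expansion $W_m=\epsilon_m 15^{3/2}(\lambda_m^\infty)^{3/2}|\bs_{k,m}|^{-3}t^{-3}+O((1+|y|)t^{-4})$; both yield the same remainder size after multiplication by $|W_k|^{4/3}\lesssim q_k^4$ and the compensation $q_k(1+|y|)\lesssim 1$. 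Your treatment of $\Omega$ and of the gradient bound (by re-running the argument with one derivative, gaining a factor $q$) also matches the paper's, which likewise only sketches the gradient estimate.
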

\begin{proof}
Note that if $\bl_k\neq \bl_m$ then $\bs_{k,m}\neq 0$. Indeed, $\bs_{k,m}=0$
would imply that $\bl_k$ and $\bl_m$ are collinear, and in that case,
it is easy to compute $\bs_{k,m}$ to show that it is non zero.

We sketch the proof and refer to the proof of Lemma 4.1 in \cite{MaM2}.
First, for $x\in \Omega$, we have for all $k$,
\[
|W_k|^\frac73\lesssim q^7\quad\mbox{and}\quad  |W_k|^\frac43\lesssim q^4,
\]
and so for such $x$, $|\bR_\Sigma|\lesssim t^{-3}q^4\lesssim t^{-4} q^3$.
Second, for $x\in B_k$, 
using  $f(1+a)=\frac73 a +O( a^2)$, we obtain
\begin{align*}
f\biggl(\sum_m W_m\biggr) & = f(W_k) f\biggl( 1+\sum_{m\neq k} \frac{W_k}{W_m}\biggr)\\
& = f(W_k) \left(1+\frac 73 \frac{\sum_{m\neq k} W_m}{W_k}\right)
+ O\biggl(f(W_k)\sum_{m\neq k}\frac{W_m^2}{W_k^2}\biggr)
\end{align*}
so that
\[
f\biggl(\sum_m W_m\biggr)- f(W_k) 
= \frac 73  \biggl(\sum_{m\neq k} W_m \biggr) W_k^\frac43
+ O\biggl(\sum_{m\neq k} q_m^6q_k \biggr)
\]
Third, we easily check that for $x\in B_k$ and $m\neq k$,
\[
|W_m(t,x)-W_m(t,\bl_k t-\by_k)|\lesssim t^{-4} q_k^{-1}.
\]
Lastly,  the estimate $|W(x)-15^{3/2}|x|^{-3}|\lesssim |x|^{-5}$ for $|x|\gg 1$
shows that for $x\in B_k$ and $m\neq k$,
\begin{align*}
W_m(t,\bl_k t-\by_k)&=15^\frac32\epsilon_m\lambda_m^\frac32|\sigma_{k,m}|^{-3}
t^{-3} + O(t^{-5}).
\end{align*}
In particular, gathering the last two estimates, we obtain
for $x\in B_k$,
\[
\Bigl|W_m(t,x)
-15^\frac32\epsilon_m\lambda_m^\frac32|\sigma_{k,m}|^{-3}t^{-3} \Bigr|
W_k^\frac43 \lesssim t^{-4} q_k^3.
\]
This proves the pointwise estimate for $\bR_\Sigma$.
The estimate for $\nabla \bR_\Sigma$ is proved similarly, with a gain in space
decay.
\end{proof}
To cancel the main interaction terms $t^{-3} c_k |W_k|^{\frac 43}$ that appear in Lemma~\ref{le:in}, we define correction terms
to be added to the sum of solitons. Let
\begin{align*}
v_k(t,x) & = \frac 1{\lambda_k^3} v_{\bl_k}\left( \frac t{\lambda_k} , \frac {x-\by_k}{\lambda_k}\right)\\
z_k(t,x) & = \frac 1{\lambda_k^4} (\partial_tv_{\bl_k})\left( \frac t{\lambda_k} , \frac {x-\by_k}{\lambda_k}\right)
+ \frac{\kappa_{\bl_k}\epsilon_k}{2 \lambda_k^{\frac 12}t^2}\Lambda_k W_k(t,x)
\end{align*}
where the function $v_{\bl}$ is defined in Lemma \ref{le:as}
and $\kappa_\bl$ is defined in \eqref{eq:FG}.
Define
\[
a_k = - \frac {c_k\epsilon_k\kappa_{\bl_k}}2,\quad 
\vec v_k = \begin{pmatrix} v_k 
\\[.2cm]z_k \end{pmatrix}
\]
and
\begin{equation}\label{eq:WW}
\vec\bW =  \begin{pmatrix} \bW
\\[.2cm] \bX \end{pmatrix}
= \begin{pmatrix} \sum_k W_k + \sum_k c_k v_k
\\[.2cm] -\sum_k \bl_k \cdot \nabla W_k + c_k z_k\end{pmatrix}=
\sum_k \bigl( \vec W_k + c_k\vec v_k \bigr).
\end{equation}
The function $\vec\bW$ defined above is a refined approximate solution to the
multi-soliton problem in the sense that it takes into account the 
first order of the soliton interactions.

\begin{lemma}[{\cite[Lemma 4.3]{MaM2}}]\label{le:43}
Assume~\eqref{eq:bs}.
Then, the function $\vec\bW$ satisfies on $I\times \RR^5$ 
\[
\left\{\begin{aligned}
\partial_t\bW & = \bX - \bM_{\bW} - \bR_{\bW}\\
\partial_t	\bX & 
	= \Delta \bW +|\bW|^{\frac 43} \bW - \bM_{\bX}- \bR_{\bX}
\end{aligned}\right.
\]
where $\Lambda_k = \frac 32 + (x -\bl_k t -\by_k)\cdot \nabla$,
\begin{align*}
\bM_{\bW} &= \sum_k \Biggl( \frac{\dot\lambda_k}{\lambda_k} - \frac{a_k}{\lambda_k^{\frac 12}t^2} \Biggr) \Lambda_k W_k
+ \sum_k \dot {\by}_k \cdot \nabla W_k\\
\bM_{\bX} &= 
- \sum_k \Biggl( \frac{\dot\lambda_k}{\lambda_k} - \frac{a_k}{\lambda_k^{\frac 12}t^2} \Biggr) (\bl_k \cdot \nabla) \Lambda_k W_k 
- \sum_k ({\dot {\by}}_k \cdot \nabla) ( \bl_k \cdot \nabla) W_k
\end{align*}
and, for all $0<\delta< 1$, it holds on $I\times\RR^5$,
\begin{equation}\label{eq:bW}\begin{aligned}
&|\bW|
\lesssim_\delta q^{3}+t^{-1}q^{3-\delta} ,\quad
|\bX|\lesssim_\delta q^{4}+t^{-2}q^{3-\delta} ,\\
&|\nabla\bW - \sum_k \nabla W_k|
+|\bX+\sum_k \bl_k\cdot \nabla W_k|
\lesssim_\delta t^{-2} q^{3-\delta}.
\end{aligned}\end{equation}
Moreover, 
for all $0<\delta\leq \alpha/4$, it holds on $I\times\RR^5$,
\begin{equation}\label{eq:Rr}
\|\nabla\bR_\bW\|_\rho\lesssim_\delta t^{-\frac 32} \cD,\quad
\|\bR_\bX\|_\rho\lesssim_\delta t^{-\frac 72+\delta}+t^{-\frac 32} \cD.
\end{equation}
\end{lemma}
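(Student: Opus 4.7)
The plan is a direct, term-by-term verification, closely following the collinear case \cite[Lemma 4.3]{MaM2}; the point is that the non-collinearity of the speeds $\bl_k$ enters only through the new values of the constants $c_k$ and $\bs_{k,m}$ in Lemma \ref{le:in}, so all algebraic cancellations carry through unchanged. First I would compute $\partial_t W_k$ and $\partial_t X_k$ from the chain rule applied to $W_k=\theta_k W_{\bl_k}$: the explicit $t$-dependence through the rigid translation $\bl_k t$ uses that $w_{\bl_k}$ is a stationary boosted soliton, hence solves $(\partial_t^2-\Delta)w_{\bl_k}=f(w_{\bl_k})$, while the modulation dependence on $\lambda_k(t), \by_k(t)$ produces exactly the contributions to $\bM_\bW$ and $\bM_\bX$, up to the $-(a_k/\lambda_k^{1/2} t^2)\Lambda_k W_k$ piece which is generated by $\partial_t$ of the spectral correction $(\kappa_{\bl_k}\epsilon_k/(2\lambda_k^{1/2} t^2))\Lambda_k W_k$ placed inside $z_k$.

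Next, using Lemma \ref{le:as} after rescaling to the $(\lambda_k,\by_k)$-frame, I would expand
\[
\partial_t z_k = \Delta v_k + \tfrac73 w_{\bl_k}^{4/3} v_k + f_{\bl_k}+g_{\bl_k}+\cE_{\bl_k} + (\text{modulation remainder}),
\]
with source $f_{\bl_k}=t^{-3}F(\zeta_{\bl_k})$, $g_{\bl_k}=t^{-2}G(\zeta_{\bl_k})$. Summing $c_k$ times this over $k$, the $c_k t^{-3} W^{4/3}(\zeta_{\bl_k})$ part of $c_k F$ cancels exactly the leading interaction $c_k t^{-3}|W_k|^{4/3}$ produced by Lemma \ref{le:in} in the expansion of $f\bigl(\sum_m W_m\bigr)$, while the $c_k \kappa_{\bl_k}\Lambda W(\zeta_{\bl_k})/t^{3}$ part and the $g_{\bl_k}$ part cancel respectively against the spectral-correction contribution to $\bM_\bW$ (recall $a_k=-c_k\epsilon_k\kappa_{\bl_k}/2$) and its spatial gradient. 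All leftover pieces (linearization crossings $(7/3) w_{\bl_k}^{4/3} (c_j v_j)$ for $j\neq k$, $O(v_k^2)$ nonlinear terms, $\cE_{\bl_k}$, and modulation-rate corrections to $v_k, z_k$) are collected into $\bR_\bW$ and $\bR_\bX$.

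The pointwise bounds \eqref{eq:bW} then follow by combining $|W_k|\sim q_k^3$ and $|X_k|\sim q_k^4$ with $|v_k|\lesssim_\delta t^{-1}q^{3-\delta}$, $|\nabla v_k|\lesssim_\delta t^{-1}q^{4-\delta}$, and the analogous bound for $z_k+\bl_k\cdot\nabla(c_k v_k)$, all coming from Lemma \ref{le:as}. For the $\rho$-weighted estimates \eqref{eq:Rr}, I would split $\bR_\bW, \bR_\bX$ by origin: modulation pieces proportional to $\dot\lambda_k$ or $\dot\by_k$ give $t^{-3/2}\cD$ via $\|\rho^{1/2}\partial v_k\|_{L^2}\lesssim_\delta t^{-3/2}$; the soliton-soliton residual $\bR_\Sigma$ of Lemma \ref{le:in} and the approximation error $\cE_{\bl_k}$ of Lemma \ref{le:as}, both of size $t^{-4+\delta}q^3$, produce $\rho$-weighted norms bounded by $t^{-7/2+\delta}$; higher-order crossings involving $v_k$ are smaller. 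The weighted integrals are handled by splitting $|x|<Lt$ from $|x|\geq Lt$ and using $\rho\lesssim\max(t,\langle x\rangle)$ together with $\alpha$ small enough.

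The main obstacle is the spectral cancellation: the inhomogeneous problem $\cE_\bl=0$ cannot be solved with the naive source $t^{-3}W^{4/3}$ alone, because $W^{4/3}$ has a non-trivial component along $\Lambda W\in\ker\cL$; the role of the constant $\kappa_\bl$ defined in \eqref{eq:FG} is precisely to modify the source to $F=W^{4/3}+\kappa_\bl\Lambda W$ so that the resulting stationary problem is solvable, at the cost of producing the secular term $\kappa_{\bl_k}\Lambda_k W_k/t^2$ inside $z_k$. Verifying that this same $\kappa_{\bl_k}$, together with $a_k=-c_k\epsilon_k\kappa_{\bl_k}/2$, simultaneously closes the cancellations in both the $\bW$ and $\bX$ equations and keeps the modulation parameters $\dot\lambda_k, \dot\by_k$ free is the delicate algebraic step; once it is in place, all remaining estimates are routine given Lemmas \ref{le:as} and \ref{le:in}.
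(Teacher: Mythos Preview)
Your proposal is correct and follows essentially the same approach as the paper, which also reduces to the term-by-term computations of \cite[Lemma~4.3]{MaM2}, decomposes $\bR_\bX=\bR_v+\bR_\Sigma+\bR_\cE+\bR_\cD$, and converts the resulting pointwise bounds into \eqref{eq:Rr} via $\|q^{3-\delta}\|_\rho\lesssim t^{1/2}$. One clarification on the last step: it is not that $\alpha$ must be small but rather that $\alpha>0$ (together with $2\delta<\alpha$, whence the hypothesis $\delta\le\alpha/4$) is what makes the tail integral $\int_{Lt}^\infty r^{-1-\alpha+2\delta}\,dr$ converge --- this is precisely the place in the argument where the exponent $\alpha$ in the definition of $\rho$ is needed.
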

\begin{remark}
The main term $t^{-\frac72+\delta}$ in the estimate of $\|\bR_\bX\|_\rho$  
says that the function $\vec\bW$ is a better approximation 
of the multi-soliton problem than simply the sum $\sum_k \vec W_k$.
Note that getting the estimate \eqref{eq:Rr} is the point in the paper where we need 
$\alpha>0$ in the definition of the function $\rho$. Indeed, if $\rho(x)=|x|/L$
for $|x|>Lt$, then from the pointwise estimates on
$\nabla\bR_\bW$ and $\bR_\bX$, the quantities $\|\nabla\bR_\bW\|_\rho$ and $\|\bR_\bX\|_\rho$
are not well-defined. We actually need $0<\delta<\alpha/2$ to be able to
define these quantities.
\end{remark}
\begin{proof}
Note that using \eqref{eq:bs}, for $t$ large so that $|\by_k-\by_k^\infty|\leq \frac12$,
\[
|W_k|\lesssim (1+|x-\bl_k t-\by_k|)^{-3}
\lesssim (1+|x-\bl_k t-\by_k^\infty|-|\by_k-\by_k^\infty|)^{-3}\lesssim q_k^3
\]
and similarly, $|\nabla W_k|\lesssim q_k^4$.
Thus, obtain the estimates \eqref{eq:bW} directly from the definition of $\bW$ and $\bX$
and \eqref{eq:Av}.
Then, from the computations in the proof of Lemma 4.3 in \cite{MaM2}, we obtain
the system for $\vec \bW$ with
\begin{align*}
\bR_\bW & = \sum_k \biggl\{-3 \frac{\dot \lambda_k}{\lambda_k^4} v_{\bl_k} \left( \frac t{\lambda_k} , \frac {x-\by_k}{\lambda_k}\right)
- \frac{\dot \lambda_k}{\lambda_k^4} \frac{t}{\lambda_k}A_{\bl_k} v_{\bl_k} \left( \frac t{\lambda_k} , \frac {x-\by_k}{\lambda_k}\right) \\ 
&\quad-\frac{\dot \lambda_k}{\lambda_k^4}\left(\frac{x-\bl_k t-\by_k}{\lambda_k}\right)\cdot \nabla v_{\bl_k} \left( \frac t{\lambda_k} , \frac {x-\by_k}{\lambda_k}\right) - \frac{\dot {\by}_k}{\lambda_k^4} \cdot\nabla v_{\bl_k} \left( \frac t{\lambda_k} , \frac {x-\by_k}{\lambda_k}\right)\biggr\}
\end{align*}
and
\[
\bR_\bX = 
\bR_v + \bR_\Sigma +\bR_\cE+\bR_\cD
\]
where we define
\[
\bR_v = |\bW|^\frac43\bW - \Bigl|\sum_k W_k\Bigr|^\frac43\Bigl( \sum_k W_k\Bigr) - 
\frac 73 \sum_k c_k |W_k|^\frac 43 v_k,
\]
\[
\bR_\cE=
\sum_k \frac 1{\lambda_k^5} \cE_{\bl_k} \left( \frac t{\lambda_k} , \frac {x-\by_k}{\lambda_k}\right),
\]
and
\begin{align*}
\bR_\cD
& = \sum_k \biggl\{- \frac{\dot {\by}_k}{\lambda_k^5} \cdot \nabla \partial_t v_{\bl_k} \left( \frac t{\lambda_k} , \frac {x-\by_k}{\lambda_k}\right)-4 \frac{\dot \lambda_k}{\lambda_k^5} \partial_t v_{\bl_k}\left( \frac t{\lambda_k} , \frac {x-\by_k}{\lambda_k}\right) 
\\ &\quad 
 - \frac{\dot \lambda_k}{\lambda_k^5} \frac{t}{\lambda_k} A_{\bl_k} \partial_t v_{\bl_k}\left( \frac t{\lambda_k} , \frac {x-\by_k}{\lambda_k}\right) 
 - \frac{\dot \lambda_k}{\lambda_k^5} \left(\frac{x-\bl t-\by_k}{\lambda_k}\right)\cdot
\nabla \partial_t v_{\bl_k} \left( \frac t{\lambda_k} , \frac {x-\by_k}{\lambda_k}\right) 
\\&\quad -\frac {\kappa_{\bl_k}\epsilon_k}{2t^2\lambda_k^{\frac 12}} \frac{\dot\lambda_k}{\lambda_k}
\left(\frac 12 \Lambda_k W_k+\Lambda_k^2 W_k\right)
-\frac {\kappa_{\bl_k}\epsilon_k\bl_k}{2t^2\lambda_k^{\frac 12}} \dot{\by}_k\cdot\nabla\Lambda_k W_k\biggr\}.
\end{align*}
Now, we justify pointwise estimates for $\bR_\bW$ and $\bR_\bX$.
First, using \eqref{eq:Av}, we see easily 
\[
|\bR_\bW|\lesssim_\delta \cD t^{-2} q^{2-\delta},\quad
|\nabla \bR_\bW|\lesssim_\delta \cD t^{-2} q^{3-\delta}
\]
and similarly, 
\[
|\bR_\cD|\lesssim_\delta \cD t^{-2} q^{3-\delta},\quad
|\nabla\bR_\cD|\lesssim_\delta \cD t^{-2} q^{4-\delta}.
\]
Moreover, using \eqref{eq:AE}, 
\[
|\bR_\cE|\lesssim_\delta t^{-4+\delta} q^3,\quad
|\nabla \bR_\cE|\lesssim_\delta t^{-4+\delta} q^4.
\]
Then, we split $\bR_v = \bR_{v,1}+\bR_{v_2}$ where
\begin{align*}
\bR_{v_1} & = f\Bigr(\sum_k W_k+\sum_k v_k\Bigl) - f\Bigl(\sum_k W_k\Bigr) 
 - f'\Bigl(\sum_m W_m\Bigr) \Bigl( \sum_k c_k v_k \Bigr)\\
\bR_{v_2} &=  \sum_k c_kv_k \Bigl(f' \Bigl(\sum_m W_m\Bigr)  -  f'(W_k)\Bigr).
\end{align*}
Using \eqref{eq:Av}, we have
\[
|\bR_{v_1}|\lesssim 
\biggl(\sum_m |W_m|^\frac13\biggr) \biggl(\sum_k |v_k|^2\biggr)
\lesssim  t^{-4} q^{4},
\]
and similarly,
\[
|\nabla \bR_{v_1}|\lesssim 
\biggl(\sum_m |\nabla W_m||W_m|^{-\frac23}\biggr) \biggl(\sum_k |v_k|^2\biggr)+
\biggl(\sum_m |W_m|^\frac13\biggr) \biggl(\sum_k |\nabla v_k| |v_k|\biggr)
\lesssim t^{-4}q^{5}.
\]
and
\[
|\bR_{v_2}|
 \lesssim \sum_k |v_k| |W_k|\biggl| \biggl|1+ \frac{\sum_{m\neq k} W_m}{W_k}\biggr|^{\frac43}-1\biggr|
 \lesssim_\delta t^{-4+\delta} q^{4-\delta}.
\]
Similarly, we have
$|\nabla \bR_{v_2}|\lesssim_\delta t^{-4+\delta}q^{5-\delta}$.
Lastly, by Lemma \ref{le:in},
\[
|\bR_\Sigma|\lesssim_\delta t^{-4} q^{3},\quad
|\nabla \bR_\Sigma|\lesssim_\delta t^{-4}q^{4}.
\]
Therefore,
\[
|\bR_\bX|\lesssim_\delta t^{-4+\delta} q^3 + \cD t^{-2}q^{3-\delta},\quad
|\nabla\bR_\bX|\lesssim_\delta t^{-4+\delta} q^4 + \cD t^{-2}q^{4-\delta}.
\]
To deduce \eqref{eq:Rr} from the above pointwise estimates, we compute
$\|q^3\|_\rho$ and $\|q^{3-\delta}\|_\rho$.
First,
\begin{equation}\label{eq:QD}
\|q^3\|_\rho^2 = \int \rho q^6  dx 
\lesssim t \int_0^{Lt} (1+r)^{-2} dr + t^\alpha \int_{Lt}^{+\infty} r^{-1-\alpha} dr
\lesssim t + \alpha^{-1}\lesssim t,
\end{equation}
for $t$ sufficiently large (depending on $\alpha$).
For $0<\delta\leq \alpha/4$, we have similarly,
\begin{equation}\label{eq:qd}
\|q^{3-\delta}\|_\rho^2 = \int \rho q^{6-2\delta}  dx 
\lesssim t \int_0^{Lt} (1+r)^{-2+2\delta} dr 
+ t^\alpha \int_{Lt}^{+\infty} r^{-1-\alpha+2\delta} dr
\lesssim t + \alpha^{-1}\lesssim t,
\end{equation}
for $t$ sufficiently large (depending on $\alpha$).
\end{proof}

\subsection{Modulation}
\begin{lemma}[Properties of the decomposition]\label{le:dc} 
There exist $T_0\gg 1$ and $0<\omega_0\ll 1$ such that if $u(t)$ is a solution of~\eqref{eq:NW} which satisfies on $I$,
\begin{equation}\label{hyp:4}
\biggl\|\vec u - \sum_{k} \left( \vec W_k^\infty + c_k\vec V_k^\infty \right) \biggr\|_{\dot H^1\times L^2}< \omega_0,
\end{equation}
then there exist $\cC^1$ functions $\lambda_k>0$, $\by_k$ on $I$ such that, 
$\vec \ep(t)$ being defined by
\[
\vec \ep=\begin{pmatrix}\ep \\ \eta \end{pmatrix},\quad 
\vec u = \begin{pmatrix} u \\ \partial_t u \end{pmatrix} =
\vec \bW + \vec \ep, 
\]
the following hold on $I$.
\begin{itemize} 
\item\emph{Orthogonality and smallness.} For $j=1,\ldots,5$, 
\begin{equation}\label{eq:or}
(\ep,\Lambda_k W_k)_{\dot H_{\bl_k}^1}=
(\ep,\partial_j W_k)_{\dot H_{\bl_k}^1}= 0,
\end{equation}
\begin{equation}\label{bounds}
|\lambda_k -\lambda_k^{\infty}|
+|\by_k -\by_k^\infty|
+\|\vec \ep \|_{\dot H^1\times L^2}
\lesssim \biggl\|\vec u -\sum_{k} \left( \vec W_k^\infty + c_k\vec V_k^\infty \right) \biggr\|_{\dot H^1\times L^2}
\end{equation}
\item\emph{Equation of $\vec \ep$.} 
\[
\left\{\begin{aligned}
\partial_t \ep & = \eta + \bM_{\bW}+\bR_{\bW}\\
\partial_t \eta & 
= \Delta \ep +\left| \bW + \ep\right|^{\frac 43} (\bW + \ep)
- |\bW|^{\frac 43}\bW + \bM_{\bX}+ \bR_{\bX} .
\end{aligned}\right.
\]
\item\emph{Parameter estimates.}
\begin{equation}
\label{eq:ly}
\biggl|\frac {\dot \lambda_k}{\lambda_k} - \frac{a_k}{\lambda_k^{\frac 12}t^2}\biggr|
+|\dot {\by}_k|
\lesssim \left\|\vec\ep\right\|_{\dot H^1\times L^2} +t^{-4}.
\end{equation}
\item\emph{Unstable directions.} Let
$z_k^{\pm} = (\vec \ep ,\vec {\tilde\theta}_k \vec Z_{\bl_k}^{\pm})$.
Then, for any $0<\delta<1$, 
\begin{equation}
\label{eq:zk}
\left| \frac d{dt} z_k^{\pm} \mp \frac{\sqrt{\lambda_0}}{\lambda_k}(1-| \bl_k|^2)^{\frac 12}z_k^{\pm} \right|
\lesssim_\delta \| \vec\ep \|_{\dot H^1\times L^2}^2
+ t^{-1} \| \vec\ep \|_{\dot H^1\times L^2} + t^{-4+\delta}.
\end{equation}
\end{itemize}
\end{lemma}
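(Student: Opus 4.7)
I follow the usual modulation procedure, adapted to the refined ansatz $\vec\bW$ of Lemma~\ref{le:43}. For each $t\in I$, view $\vec u(t) = \vec\bW(t) + \vec\ep(t)$ as defining $\vec\ep$ once the $6K$ parameters $(\lambda_k,\by_k)_{k=1}^K$ are chosen, and apply the implicit function theorem to the map
\[
\Phi(\lambda_k,\by_k) = \Bigl( (\ep,\Lambda_k W_k)_{\dot H^1_{\bl_k}},\; (\ep,\partial_j W_k)_{\dot H^1_{\bl_k}} \Bigr)_{k,j},
\]
based at $(\lambda_k^\infty,\by_k^\infty)$. At the base point, $\vec u = \sum_k(\vec W_k^\infty + c_k\vec V_k^\infty)$ gives $\vec\ep=0$, and the Jacobian of $\Phi$ splits, modulo cross-soliton terms polynomially small in $t$ (the speeds $\bl_k$ are two-by-two distinct and $\Lambda W,\partial_j W$ are spatially localized), into $K$ diagonal blocks, each an invertible Gram matrix built from the kernel of $\cL_{\bl_k}$. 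Quantitative IFT yields $(\lambda_k,\by_k)\in\cC^1(I)$ together with the bound~\eqref{bounds}.

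The system for $\vec\ep$ is obtained by subtracting Lemma~\ref{le:43} from~\eqref{eq:NW} rewritten as a first-order system. To prove~\eqref{eq:ly}, I would differentiate the orthogonality conditions~\eqref{eq:or} in~$t$ and substitute the $\vec\ep$-equation. The resulting linear system for $\bigl(\dot\lambda_k/\lambda_k - a_k/(\lambda_k^{1/2}t^2),\,\dot\by_k\bigr)_k$ has principal matrix equal to the same invertible block-diagonal Gram matrix, and right-hand side of two types: a piece of size $O(\|\vec\ep\|_{\dot H^1\times L^2})$ from the nonlinearity of~\eqref{eq:NW} around $\bW$; and contributions of $\bR_\bW,\bR_\bX$ paired against the localized test functions, of size $O(t^{-4})$ from the pointwise bounds derived in the proof of Lemma~\ref{le:43}. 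The shift $a_k/(\lambda_k^{1/2}t^2)$ is exactly the imprint of the term $\frac{\kappa_{\bl_k}\epsilon_k}{2\lambda_k^{1/2}t^2}\Lambda_k W_k$ built into $z_k$; this term absorbs the $\Lambda W$-component of the spectrally obstructed leading interaction $c_k t^{-3}|W_k|^{4/3}$ of Lemma~\ref{le:in}, via the choice of $F$ and $\kappa_\bl$ in~\eqref{eq:FG}.

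For~\eqref{eq:zk}, I would expand
\[
\tfrac{d}{dt} z_k^\pm = (\partial_t\vec\ep,\vec{\tilde\theta}_k\vec Z_{\bl_k}^\pm) + (\vec\ep,\partial_t(\vec{\tilde\theta}_k\vec Z_{\bl_k}^\pm)),
\]
insert the $\vec\ep$-equation in the first summand, and use that $\vec Z_{\bl_k}^\pm$ are eigenfunctions of the Hamiltonian linearization of~\eqref{eq:NW} about $w_{\bl_k}$ with eigenvalues $\pm\sqrt{\lambda_0}(1-|\bl_k|^2)^{1/2}$. Combined with the $\partial_t$-derivative of the transported/rescaled frame $\vec{\tilde\theta}_k$, which regenerates the convection $\bl_k\cdot\nabla$ and the rescaling factors, the leading part yields $\pm(\sqrt{\lambda_0}/\lambda_k)(1-|\bl_k|^2)^{1/2}z_k^\pm$. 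The error terms are: the cross-linearization discrepancy $[f'(\bW)-f'(W_k)]\ep$, bounded by $t^{-1}\|\vec\ep\|_{\dot H^1\times L^2}$ because $|W_m|\lesssim t^{-3}$ on the effective support of the $k$-th test function for $m\neq k$; the quadratic nonlinear remainder of size $O(\|\vec\ep\|^2_{\dot H^1\times L^2})$; and the source contributions of $\bM_\bW,\bM_\bX,\bR_\bW,\bR_\bX$ producing $O(t^{-4+\delta})$ via~\eqref{eq:Rr} combined with~\eqref{eq:ly} just established.

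The step I expect to be the main obstacle is the algebraic bookkeeping needed to verify that $\bM_\bW,\bM_\bX$ contribute only to the diagonal of the Gram system for $(\dot\lambda_k,\dot\by_k)$ and produce exactly the shift $a_k/(\lambda_k^{1/2}t^2)$, with no spurious cross terms between different solitons at leading order. This relies on the speed separation $|\bl_k-\bl_m|\gtrsim\sigma$ and on the spectral cancellation engineered by~\eqref{eq:FG}.
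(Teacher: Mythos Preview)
The paper does not supply its own proof of this lemma: it is stated without argument and relies implicitly on the identical result \cite[Lemma~4.4]{MaM2}. Your plan is the standard modulation procedure and coincides with that reference, so it is correct in outline and in spirit.

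One point deserves more care than you give it. In the derivation of~\eqref{eq:zk} you assert that the $\bM_\bW,\bM_\bX$ contributions are $O(t^{-4+\delta})$. Taken at face value, pairing $\bM_\bW$ against the first component of $\vec{\tilde\theta}_k\vec Z_{\bl_k}^\pm$ produces a term of size
\[
\Bigl(\Bigl|\tfrac{\dot\lambda_k}{\lambda_k}-\tfrac{a_k}{\lambda_k^{1/2}t^2}\Bigr|+|\dot\by_k|\Bigr)\cdot\bigl|(\Lambda_k W_k,\cdot)\bigr|,
\]
which via~\eqref{eq:ly} is only $O(\|\vec\ep\|_{\dot H^1\times L^2}+t^{-4})$, too large for the right-hand side of~\eqref{eq:zk}. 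What saves you is the $L^2$ orthogonality of the kernel directions $\Lambda W,\partial_j W$ to the eigenfunction $Y$ underlying $\vec Z_{\bl}^\pm$: since $\cL$ is self-adjoint with $\cL\Lambda W=\cL\partial_j W=0$ and $\cL Y=-\lambda_0 Y$ with $\lambda_0\neq 0$, one has $(\Lambda W,Y)_{L^2}=(\partial_j W,Y)_{L^2}=0$, and this carries over to the Lorentz-boosted pairing. With that cancellation, the diagonal $\bM$ contribution vanishes, and only the cross-soliton pieces (which are $O(t^{-3})$ or better by separation) and the frame-derivative terms survive, matching your claimed bound. You should make this orthogonality explicit rather than absorbing it into ``\eqref{eq:Rr} combined with~\eqref{eq:ly}''.
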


\subsection{Bootstrap estimates}
We introduce another notation
\begin{equation}\label{eq:NN}
\cN = \sqrt{\|\nabla\ep\|_\rho^2 + \|\eta\|_\rho^2}.
\end{equation}
Let $\delta>0$ to be chosen sufficiently small.
We will assume the following bootstrap estimates, for some $C_0>0$ to be fixed,
\begin{equation}\label{eq:BS}
 |\lambda_k(t)-\lambda_k^\infty|\leq C_0 t^{-1},\quad
|\by_k(t)-\by_k^\infty|\leq  t^{-1},\quad
|z_k^{\pm}(t)| \leq t^{-\frac72},\quad
\cN(t)\leq t^{-\frac52+2\delta}.
\end{equation}
As a direct consequence of the bootstrap estimates, we have
\begin{equation}\label{eq:en}
\left\|\vec\ep\right\|_{\dot H^1\times L^2}
\lesssim t^{-\frac12} \cN
\lesssim t^{-3+2\delta}.
\end{equation}
From \eqref{eq:ly}, assuming \eqref{eq:BS}, we also have
\begin{equation}
\label{eq:pC}
\Biggl|\frac {\dot \lambda_k}{\lambda_k} - \frac{a_k}{\lambda_k^{\frac 12}t^2}\Biggr|+|\dot {\by}_k|
\lesssim t^{-\frac12} \cN + t^{-4} \lesssim t^{-3+2\delta},\quad
\cD\lesssim t^{-2},
\end{equation}
and from \eqref{eq:zk},
\begin{equation}
\label{eq:zC}
\left| \frac d{dt} z_k^{\pm} \mp \frac{\sqrt{\lambda_0}}{\lambda_k}(1-| \bl_k|^2)^{\frac 12}z_k^{\pm} \right|
\lesssim t^{-4+2\delta}.
\end{equation}
Moreover, from the definition of $\bM_\bW$, $\bM_\bX$ and \eqref{eq:pC},
we have
\begin{equation}\label{eq:Mm}
|\nabla \bM_\bW|+|\bM_\bX|\lesssim \left(t^{-\frac 12} \cN +t^{-4}\right) q^4
\lesssim t^{-3+2\delta}q^4,
\end{equation}
\begin{equation}\label{eq:MM}
\|\nabla \bM_\bW\|_{\rho}+\|\bM_\bX\|_{\rho}\lesssim \cN+t^{-\frac 72}.
\end{equation}
For future use, 
we derive additional estimates on $\ep$, reminiscent of the Hardy inequality.

\begin{lemma}
It holds \begin{equation}\label{eq:e1}
\int\rho|x|^{-2} \ep^2+ t \int q^2 \ep^2 + t^\alpha \int q^{1+\alpha}\ep^2
+\int \rho q^2 \ep^2
+ \int \frac{\Theta}{|x|} \ep^2 
+\int \Theta q \ep^2 
\lesssim \int \rho|\nabla\ep|^2.
\end{equation}
\end{lemma}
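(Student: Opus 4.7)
My plan is to reduce each of the six weighted $L^2$-quantities on the left of \eqref{eq:e1} to a single ``master'' weighted Hardy inequality of the form
\[
\int \rho(t,x)\,|x-z|^{-2}\,\ep^2\,dx \;\lesssim\; \int \rho(t,x)\,|\nabla\ep|^2\,dx,
\]
valid uniformly for any center $z\in\{0,\,x_1^\infty,\ldots,x_K^\infty\}$, where $x_k^\infty := \bl_k t + \by_k^\infty$. The choice $L=1-\sigma$ in \eqref{eq:sg} will guarantee $|z|\leq \bar\ell\, t + C < Lt$ for every such $z$ once $t\geq T_0$ is large, so the master inequality will apply with a constant independent of the chosen center.

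I will prove the master inequality by integrating by parts against the radial-from-$z$ vector field $V(x)=(x-z)/|x-z|^2$, which satisfies $\DV V = 3|x-z|^{-2}$ on $\RR^5\setminus\{z\}$. Multiplying by $\rho\ep^2$ and using that $\rho$ is continuous and piecewise $\cC^1$ (so no distributional surface contribution appears at the Lipschitz interface $|x|=Lt$), this yields
\[
3\int \rho\,|x-z|^{-2}\,\ep^2 \;=\; -\int \frac{(x-z)\cdot\nabla\rho}{|x-z|^2}\,\ep^2 \;-\; 2\int \frac{(x-z)\cdot\nabla\ep}{|x-z|^2}\,\rho\,\ep.
\]
Since $\nabla\rho\equiv 0$ on $|x|<Lt$ and $\nabla\rho = (1-\alpha)\,\rho\, x/|x|^2$ on $|x|>Lt$, and since $|z|<Lt\leq|x|$ on the outer region gives $(x-z)\cdot x\geq |x|(|x|-|z|)\geq 0$, the first integral on the right will be non-positive; Cauchy-Schwarz applied to the second then closes the inequality, yielding a constant $\leq 4/9$.

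With the master inequality in hand, I will dispatch the six terms by pointwise weight comparisons. The bounds on $\rho|x|^{-2}$, on $t\,q_k^2$, and on $\rho\, q_k^2$ are immediate from $q_k\leq |x-x_k^\infty|^{-1}$ together with $\rho\geq t$. The three remaining comparisons $t^\alpha q_k^{1+\alpha}\lesssim \rho|x-x_k^\infty|^{-2}$, $\Theta/|x|\lesssim \rho|x|^{-2}$, and $\Theta q_k\lesssim \rho|x-x_k^\infty|^{-2}$ will reduce to elementary case-checks in the two regions $|x|<Lt$ and $|x|\geq Lt$, using the explicit piecewise formulas for $\rho$ and $\Theta$ and the bound $|x-x_k^\infty|\lesssim t+|x|$; summing finitely many copies of the master inequality over the centers $z\in\{0,x_1^\infty,\ldots,x_K^\infty\}$ then gives \eqref{eq:e1}. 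The one delicate point I anticipate is precisely the sign of $(x-z)\cdot\nabla\rho$ on the outer region, which demands that the center $z$ lie strictly inside $\{|x|<Lt\}$; this is exactly what the quantitative margin $L-\bar\ell\gtrsim 1-\bar\ell$ built into $\sigma$ in \eqref{eq:sg} provides, so the step goes through uniformly for $t\geq T_0$ large, and everything else is routine weight bookkeeping.
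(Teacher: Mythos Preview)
Your argument is correct, and it is genuinely different from the paper's.

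The paper treats the six terms one by one with the unweighted Hardy inequality $\int |x|^{-2}f^2\lesssim\int|\nabla f|^2$ and its generalization $\int |x|^{-1-\alpha}f^2\lesssim\int |x|^{1-\alpha}|\nabla f|^2$, applied either to $\ep$ directly or, for the $\Theta$-weighted terms, to $f=\Theta^{1/2}\ep$. In the latter case the commutator $\nabla\Theta^{1/2}$ produces an error of size $\alpha\int\Theta|x|^{-1}\ep^2$, which is absorbed back into the left-hand side only by taking $\alpha$ small. Your route instead establishes a single $\rho$-weighted Hardy inequality $\int\rho|x-z|^{-2}\ep^2\leq\frac{4}{9}\int\rho|\nabla\ep|^2$ for any center $z$ with $|z|<Lt$, exploiting the sign $(x-z)\cdot\nabla\rho\geq 0$ on $\{|x|>Lt\}$ (which is exactly where $\nabla\rho\neq 0$); everything then reduces to the pointwise bounds $\Theta|x|\leq\rho$ and $\Theta|x-x_k^\infty|\lesssim\rho$, which hold with constants independent of $\alpha$. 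Your approach is more unified, avoids the commutator step, and in particular does not need $\alpha$ small for this lemma (smallness of $\alpha$ is of course still needed elsewhere in the paper). The paper's approach, on the other hand, stays closer to off-the-shelf Hardy inequalities and does not rely on the specific monotonicity structure of $\rho$.
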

\begin{proof}
From the definitions of $\Theta$ and $\rho$, we recall that
\[
0\leq \Theta=\min(1,(Lt/|x|)^\alpha)\leq 1\quad\mbox{and}\quad
\rho = \max(t,t^\alpha(|x|/L)^{1-\alpha})=\Theta \max (t,|x|/L)\geq t.
\]
Recall the Hardy inequality, for $f\in \dot H^1(\RR^5)$,
\[
\int \frac{f^2}{|x|^2} \lesssim \int |\nabla f|^2,
\]
and more generally
\[
\int \frac{f^2}{|x|^{1+ \alpha}} \lesssim \int |x|^{1- \alpha}|\nabla f|^2,
\]
where the implicit constant can be taken independent of $\alpha$, for $\alpha\in[0,1]$.
From this, we have for example,
\[
\int q^2 \ep^2 \lesssim \int |\nabla\ep|^2 \lesssim t^{-1}\int \rho|\nabla\ep|^2.
\]
Moreover,
\[
\int \rho|x|^{-2}\ep^2
\lesssim t^\alpha \int (|x|^{-1-\alpha}+t^{1-\alpha}|x|^{-2})\ep^2
\lesssim t^{\alpha} \int |x|^{1-\alpha}|\nabla\ep|^2
+t \int |\nabla\ep|^2 \lesssim \int \rho|\nabla\ep|^2.
\]
Then, we have
\begin{align*}
\int q_k^{1+\alpha}\ep^2
 \lesssim \int |x-\bl_k t - \by_k|^{-1-\alpha} \ep^2
&\lesssim \int |x-\bl_k t - \by_k|^{1-\alpha} |\nabla\ep|^2\\
&\lesssim \int (|x|+t)^{1-\alpha}|\nabla\ep|^2\lesssim t^{-\alpha} \int \rho |\nabla\ep|^2.
\end{align*}
Thus, we have proved
\[
t^\alpha \int q^{1+\alpha}\ep^2\lesssim \int \rho |\nabla\ep|^2.
\]
The estimate $\int \rho q^2 \ep^2 \lesssim \int \rho |\nabla\ep|^2$ easily follows.
Now, setting $f=\Theta^{\frac12}\ep$, we have
\[
\nabla f = \Theta^\frac 12 \nabla \ep+\frac 12 (\nabla\Theta) \Theta^{-\frac12} \ep 
\]
and
\[
\int |x| |\nabla f|^2
=\int \Theta |x| |\nabla \ep|^2 + \int |x|(\nabla \Theta \cdot\nabla \ep) \ep
+ \int \frac{|\nabla \Theta|^2}{4\Theta} |x|\ep^2.
\]
Using $|x||\nabla \Theta|\lesssim \alpha \Theta$, we obtain
\[
\int |x| |\nabla f|^2 \lesssim \int \Theta |x| |\nabla \ep|^2
+ \alpha^2 \int \frac\Theta{|x|}\ep^2.
\]
Thus, as before,
\begin{align*}
\int \frac{\Theta}{|x|} \ep^2 
= \int \frac{f^2}{|x|} 
\lesssim \int |x||\nabla f|^2 \lesssim \int \Theta |x| |\nabla \ep|^2
+ \alpha^2 \int \frac\Theta{|x|}\ep^2
\end{align*}
and so for $\alpha>0$ small,
\[
\int \frac{\Theta}{|x|} \ep^2 
\lesssim \int \Theta |x| |\nabla \ep|^2\lesssim \int \rho |\nabla \ep|^2.
\]
We also check that
\[
\int \Theta q_k \ep^2 
\lesssim \int \Theta |x-\bl t - \by_k| |\nabla \ep|^2
\lesssim \int \Theta (|x|+|t|) |\nabla \ep|^2
\lesssim \int \rho |\nabla \ep|^2.
\]
\end{proof}

\section{Energy estimates}\label{S:3}

\subsection{Technical estimates}
We set
\[
\Phi = \Theta \psi,
\quad 
\psi=\prod_k (1-\qk).
\]
We also define (the functions $q_k$ are defined in \eqref{eq:qk})
\begin{equation}\label{eq:dx}
\bC(t,x) = \Bigl(x- \sum_k \left((x-\bl_k t - \by_k^\infty) \qk(t,x) 
+\BF_k(t) \varphi_k(t,x)\right)\Bigr)\Theta(t,x)
\end{equation}
where
\begin{equation}\label{eq:vv}
\varphi_k(t,x)
=\varphi\bigl(t^{-1+\frac\alpha2}\left(x-\bl_k t-\by_k^\infty\right)\bigr)
\end{equation}
and
\begin{equation}\label{eq:fk}
\BF_k(t)= 
\by_k^\infty-\sum_{m\neq k}\left((\bl_k-\bl_m)t+(\by_k^\infty-\by_m^\infty)\right)
q_m^\alpha(t,\bl_k t+\by_k^\infty).
\end{equation}
Note that this choice of $\BF_k$ implies that 
$\bC(t,\bl_k t+\by_k^\infty)=\bl_k t$ and
$|\BF_k|\lesssim t^{1-\alpha}$.
Now, we give some estimates on the above defined functions.
\begin{lemma}
\begin{itemize}
\item\emph{Estimates on $\qk$.}
\begin{equation}\label{eq:pk}
|\partial_j \qk|+|\partial_t \qk|\lesssim \alpha q_k^{1+\alpha},\quad
|\partial_j^2\qk|
\lesssim \alpha q_k^{2+\alpha}.
\end{equation}
\item\emph{Estimates on $\Phi$.}
\begin{equation}\label{eq:Ph}
|\partial_j \Phi|\lesssim \alpha \Theta q,\quad
|\partial_t \Phi| \lesssim \alpha\Theta \left(t^{-1}+q\right).
\end{equation}
\item\emph{Estimates on $\bC$.} 
\begin{equation}\label{eq:GG}
\mbox{For $i\neq j$,}\quad
|\partial_j \bC_j - \Phi|+|\partial_i \bC_j|\lesssim \alpha \Theta,\quad 
\Bigl|\partial_t \bC -\sum_k \bl_k \qk \Bigr|
\lesssim \alpha t^{-1} \rho.
\end{equation}
\item\emph{Estimate on $\bC$ near the solitons.}
\begin{equation}\label{eq:cl}
\mbox{For $x\in B_k$,}\quad
|\bC -\bl_k t |\lesssim q_k^{-1} \min(\alpha q_k^{-2},1).
\end{equation}
\end{itemize}
\end{lemma}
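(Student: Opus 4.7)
The plan is to prove the four displayed estimates in order, each being a chain-rule or product-rule computation that uses the previous ones as input.

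For (1), set $r_k(t,x) = x - \bl_k t - \by_k^\infty$, so that $q_k = (1+|r_k|^2)^{-1/2}$. Direct computation gives $\nabla q_k = -q_k^3 r_k$ and $\partial_t q_k = q_k^3 (\bl_k \cdot r_k)$, whence $|\nabla q_k|+|\partial_t q_k|\lesssim q_k^2$ using $|r_k|\leq q_k^{-1}$. The identity $\partial q_k^\alpha = \alpha q_k^{\alpha-1}\partial q_k$ then produces both the factor $\alpha$ and the power $q_k^{\alpha+1}$; the second-derivative bound follows by differentiating once more and retaining the leading power.

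For (2), apply the product rule to $\Phi = \Theta\psi$. The function $\Theta$ is locally constant on $|x|<Lt$; on $|x|\geq Lt$ the explicit form $\Theta=(Lt/|x|)^\alpha$ gives $|\partial_j\Theta|\lesssim \alpha\Theta/|x|$ and $|\partial_t\Theta|\lesssim \alpha\Theta/t$. Since $\bar\ell<L$, each soliton center stays inside $|x|<Lt$, so on $|x|\geq Lt$ one has $|r_k|\gtrsim \max(\sigma t,|x|)$ and hence $q\lesssim 1/|x|$, yielding $\Theta/|x|\lesssim \Theta q$. The derivative of $\psi=\prod_k(1-q_k^\alpha)$ equals $-\sum_k(\partial q_k^\alpha)\prod_{m\neq k}(1-q_m^\alpha)$, bounded by $\alpha q$ thanks to (1).

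Estimate (3) is the main step. Expanding $\bC_j = (x_j - \sum_k (r_k)_j q_k^\alpha - \sum_k (\BF_k)_j \varphi_k)\Theta$ by the product rule,
\[
\partial_j\bC_j = \Theta\Bigl(1 - \sum_k q_k^\alpha - \sum_k (r_k)_j\partial_j q_k^\alpha - \sum_k (\BF_k)_j\partial_j\varphi_k\Bigr) + (\partial_j\Theta)\,[\,\cdot\,]_j.
\]
The leading piece $\Theta(1-\sum_k q_k^\alpha)$ differs from $\Phi=\Theta\prod_k(1-q_k^\alpha)$ by the cross sums $\Theta\sum_{k<m}q_k^\alpha q_m^\alpha-\cdots$. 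Since any two solitons are separated by at least $100\sigma t$, at least one of $q_k,q_m$ is bounded by $C/(\sigma t)$, so these products are $\lesssim(\sigma t)^{-\alpha}$, which is $\leq\alpha$ once $T_0$ is chosen large in terms of $\alpha$. The term $(r_k)_j\partial_j q_k^\alpha$ is $O(\alpha q_k^\alpha)\leq O(\alpha)$ by (1); the term $(\BF_k)_j\partial_j\varphi_k$ is $O(t^{1-\alpha}\cdot t^{-1+\alpha/2})=O(t^{-\alpha/2})$, again absorbed into $\alpha$ by enlarging $T_0$; and $|\partial_j\Theta||[\cdot]_j|\lesssim(\alpha\Theta/|x|)\cdot|x|=\alpha\Theta$. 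The off-diagonal bound $|\partial_i\bC_j|$ is obtained the same way, without the $\Phi$ piece. For $\partial_t\bC-\sum_k\bl_k q_k^\alpha$, differentiating $-(r_k)q_k^\alpha$ through $-\bl_k t$ produces the main term $\Theta\sum_k\bl_k q_k^\alpha$; the discrepancy $(\Theta-1)\sum_k\bl_k q_k^\alpha$ is supported on $|x|\geq Lt$ where $q_k^\alpha\lesssim|x|^{-\alpha}$ and $\rho\gtrsim t^\alpha|x|^{1-\alpha}$, so for $T_0$ large the bound $\lesssim\alpha t^{-1}\rho$ holds. The remaining errors involving $\partial_t q_k^\alpha$, $\dot\BF_k$, $\partial_t\varphi_k$ and $\partial_t\Theta$ are handled analogously.

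For (4), the construction of $\BF_k$ forces $\bC(t,\bl_k t+\by_k^\infty)=\bl_k t$, while $\psi$ (hence $\Phi$) vanishes at the same point since $q_k^\alpha=1$ there. On the segment $y(s)=\bl_k t+\by_k^\infty + s\,r_k(x)$, $s\in[0,1]$, the fundamental theorem of calculus gives $|\bC(x)-\bl_k t|\leq|r_k|\int_0^1|\nabla\bC(y(s))|\,ds$. By (3), $|\nabla\bC|\lesssim\Phi+\alpha\Theta$, and in $B_k$ (where $\Theta=1$) one has $\psi\leq 1-q_k^\alpha\leq 2\alpha(1-q_k)\lesssim\alpha q_k^{-2}$ on the portion of the segment where $q_k\geq 1/2$. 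Integrating yields $|\bC-\bl_k t|\lesssim\alpha q_k^{-3}$, while the crude bound $|\nabla\bC|\lesssim 1$ always gives $|\bC-\bl_k t|\lesssim q_k^{-1}$; combining produces $q_k^{-1}\min(\alpha q_k^{-2},1)$. The main obstacle throughout is the bookkeeping in (3): several error terms — cross products $q_k^\alpha q_m^\alpha$, the $\BF_k\partial_j\varphi_k$ correction, and the $(\Theta-1)q_k^\alpha$ defect in the time derivative — are not intrinsically $\alpha$-small and must be absorbed by choosing $T_0$ large enough in terms of $\alpha$.
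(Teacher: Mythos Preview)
Your approach is exactly that of the paper, but there are two slips worth correcting.

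In part~(2), to bound $|\partial_j\Theta|\lesssim\alpha\Theta/|x|$ by $\alpha\Theta q$ on $|x|\geq Lt$ you need $1/|x|\lesssim q$, i.e.\ a \emph{lower} bound on $q_k$, hence an \emph{upper} bound on $|r_k|$. Your argument gives the opposite direction ($|r_k|\gtrsim|x|$ implies $q\lesssim 1/|x|$). The fix is immediate: on $|x|\geq Lt$ one has $|r_k|\leq|x|+|\bl_k|t+|\by_k^\infty|\leq|x|+\bar\ell t+C\lesssim|x|$, whence $q_k\gtrsim 1/|x|$.

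In part~(4), the chain $1-q_k^\alpha\leq 2\alpha(1-q_k)$ does require $q_k$ bounded away from~$0$, but on the segment from the $k$th center to a point $x\in B_k$ with $|r_k(x)|$ large, $q_k$ can be arbitrarily small along part of the segment, and you do not treat that portion. The cleaner route (the paper's) is to use the inequality $1-(1+a^2)^{-\alpha/2}\lesssim\alpha a^2$, valid for all $a\geq 0$, which gives $\Phi\leq 1-q_k^\alpha\lesssim\alpha|r_k|^2$ along the entire segment in $B_k$; then $|\nabla\bC|\lesssim\Phi+\alpha\Theta\lesssim\alpha(1+|r_k|^2)$ and the mean value theorem yields $|\bC-\bl_k t|\lesssim\alpha q_k^{-3}$ directly.

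With these two corrections your argument coincides with the paper's.
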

\begin{proof}
Note that
\begin{align*}
\partial_j \qk(t,x)
& = - \alpha (x_j - \bl_{k,j} t - \bl_{k,j}^\infty) 
 q_k^{2+\alpha}(t,x), \\
\partial_t q_k^\alpha(t,x)
&=-\bl_k\cdot\nabla q_k^\alpha(t,x),
\\
\partial_j^2\qk(t,x)
&= - \alpha 
q_k^{2+\alpha}(t,x)
 + \alpha (2+ \alpha) (x_j - \bl_{k,j} t - \bl_{k,j}^\infty)^2
q_k^{4+\alpha}(t,x),
\end{align*}
which implies \eqref{eq:pk}.
Then, we see that
\[
|\nabla \Phi|\lesssim
|\nabla\Theta|\psi+\Theta|\nabla\psi|
\lesssim \alpha \Theta ( |x|^{-1} \UN_{|x|>Lt} +q^{1+\alpha})
\lesssim \alpha \Theta q.
\]
Note that $\partial_t \Theta = \alpha t^{-1} \Theta \UN_{|x|>Lt}$ and thus
\begin{align*}
\partial_t \Phi
&= \alpha t^{-1} \Theta \UN_{|x|>Lt} \psi
- \Theta\sum_k  (\partial_t \qk)\prod_{m\neq k} (1-q_m^\alpha)\\
&=\alpha \Theta \Bigl( t^{-1} \UN_{|x|>Lt}\psi+ 2 \sum_k (\bl_k \cdot \nabla q_k) q_k^{\alpha-1}\prod_{m\neq k} (1-q_m^\alpha)
\Bigr).
\end{align*}
This implies that $|\partial_t \Phi| \lesssim \alpha\Theta (t^{-1}+q)$
and \eqref{eq:Ph} is proved.

We prove \eqref{eq:GG}. First, for $i=j$, one has
\begin{align*}
\partial_j\bC_j& = \Bigl(1-\sum_k q_k^\alpha-\sum_k(x_j-\bl_{k,j} t- \by_{k,j}^\infty) \partial_j \qk - \BF_{k,j}\partial_j \varphi_k\Bigr) \Theta\\
&\quad + \Bigl(x_j- \sum_k \left((x-\bl_{k,j} t - \by_{k,j}^\infty) \qk 
+\BF_{k,j} \varphi_k \right)\Bigr)\partial_j\Theta.
\end{align*}
By the definition of $\psi$, it is easy to see that
$|1-\sum_k \qk-\psi|\lesssim t^{-\alpha}$.
By \eqref{eq:pk},
\[
|x_j - \bl_{k,j}t-\by_{k,j}^\alpha| \, |\partial_j \qk|
\lesssim \alpha \qk \lesssim \alpha.
\]
Then, by the definition of $\varphi_k$, and $|\BF_{k,j}|\lesssim t^{1-\alpha}$
(from its definition in \eqref{eq:fk}),
\[
|\BF_{k,j}\partial_j\varphi_k|\lesssim t^{1-\alpha}t^{-1+\frac\alpha2}
\lesssim t^{-\frac\alpha2}.
\]
We also check that $\partial_j\Theta=-\alpha\frac{x_j}{|x|^2}\Theta\UN_{|x|>LT}$,
and so
\[
\Bigl|x- \sum_k \left((x-\bl_k t - \by_k^\infty) \qk 
+\BF_k \varphi_k \right)\Bigr|\, |\partial_j\Theta|
\lesssim \alpha \frac{|x|+t}{|x|}\UN_{|x|>Lt} \lesssim \alpha.
\]
Therefore, for $T_0$ sufficiently large, we have proved
$|\partial_j\bC_j-\psi|\lesssim\alpha$.
Second, for $i\neq j$, we have
\begin{align*}
\partial_i\bC_j& = \Bigl(-\sum_k(x_j-\bl_{k,j} t- \by_{k,j}^\infty) \partial_i \qk - \BF_{k,j}\partial_i \varphi_k\Bigr) \Theta\\
&\quad + \Bigl(x_j- \sum_k \left((x-\bl_{k,j} t - \by_{k,j}^\infty) \qk 
+\BF_{k,j} \varphi_k \right)\Bigr)\partial_i\Theta.
\end{align*}
By the same estimates as before, we obtain $|\partial_i\bC_j|\lesssim\alpha$,
for $i\neq j$, which proves \eqref{eq:GG}, for the space derivatives.
For the time derivative, we compute
\begin{align*}
\partial_t \bC & = 
\Bigl(\sum_k \left(\bl_k \qk+(x-\bl_k t-\by_k^\infty)\partial_t \qk +\BF_k'\varphi_k+\BF_k \partial_t \varphi_k\right)\Theta\\
&\quad + \Bigl(x- \sum_k \left((x-\bl_k t - \by_k^\infty) \qk 
+\BF_k \varphi_k \right)\Bigr)\partial_t\Theta.
\end{align*}
By \eqref{eq:pk}, we have
\[
|x-\bl_k t-\by_k^\infty|\,|\partial_t \qk|
\lesssim \alpha \Theta \qk \lesssim \alpha.
\]
By \eqref{eq:fk}, $|\BF_k'\varphi_k|\Theta \lesssim t^{-\alpha}$.
By a direct computation from the definition of $\varphi_k$,
\begin{align*}
\partial_t\varphi_k& = -\Bigl(1-\frac\alpha2\Bigr)t^{-1}
t^{-1+\frac\alpha2}(x-\bl_kt-\by_k^\infty)
\cdot\nabla\varphi\big(t^{-1+\frac\alpha2}(x-\bl_kt-\by_k^\infty)\big)\\
&\quad - t^{-1+\frac\alpha2}
\bl_k\cdot\nabla \varphi\big(t^{-1+\frac\alpha2}(x-\bl_kt-\by_k^\infty)\big).
\end{align*}
From this identity, we obtain $|\partial_t\varphi_k|\lesssim t^{-1+\frac\alpha2}$,
and using also $|\BF_k|\lesssim t^{1-\alpha}$ from its definition, 
we obtain $|\BF_k\partial_t\varphi_k|\lesssim t^{-\frac\alpha2}$.
By the definition of $\Theta$, 
$\partial_t \Theta =   \alpha t^{-1}\Theta\UN_{|x|>Lt}$, which implies that
$|\partial_t \Theta|\lesssim  \alpha t^{-1}\Theta$ and thus
\[
 \Bigl|x- \sum_k \left((x-\bl_k t - \by_k^\infty) \qk 
+\BF_k \varphi_k \right)\Bigr|\, |\partial_t\Theta|
\lesssim (|x|+t) \alpha t^{-1}  \Theta\lesssim \alpha t^{-1} \rho.
\]
This finishes the proof of \eqref{eq:GG}.

Lastly, we prove \eqref{eq:cl}.
We check that $\bC(t,\bl_k t+\by_k^\infty)=\bl_k t$.
Indeed, $\Theta(\bl_kt +\by_k^\infty)=1$,
$\varphi_k(\bl_k t+\by_k^\infty)=\varphi(0)=1$ and $\varphi_m(\bl_k t+\by_k^\infty)=0$
for $m\neq k$. Thus,
\[
\bC(t,\bl_kt+\by_k^\infty)=
\bl_kt+\by_k^\infty -\BF_k 
-\sum_{m\neq k}\left((\bl_k-\bl_m)t+(\by_k^\infty-\by_m^\infty)\right)
q_m^\alpha(t,\bl_k t+\by_k^\infty)
\]
which implies the result by the definition of $\BF_k$
in \eqref{eq:fk}.
Since $\Phi\leq 1$, by \eqref{eq:GG}, one has
$|\nabla\bC|\lesssim 1$ and so, by the mean value Theorem,
$|\bC-\bl_k t|\lesssim |x-\bl_k t-\by_k^\infty|\lesssim q_k^{-1}$
holds on $\RR^5$.
Moreover, on $B_k$, 
by the general inequality $0\leq 1- (1+a^2)^{-\frac\alpha2}
\lesssim \alpha a^2$, we have
\[
\Phi\leq |1-\qk|\Theta \lesssim \alpha |x-\bl_kt-\by_k^\infty|^2.
\]
Thus, also using \eqref{eq:GG}, we have on $B_k$,
\[
|\nabla\bC|\lesssim \alpha (1+|x-\bl_k t -\by_k^\infty|^2).
\]
By the mean value Theorem, we obtain on $B_k$,
\[
|\bC(t,x)-\bl_kt|
\lesssim \alpha|x-\bl_k t -\by_k^\infty| (1+|x-\bl_kt-\by_k^\infty|^2)
\lesssim \alpha q_k^{-3}.
\]
This finishes the proof of \eqref{eq:cl}.
\end{proof}

\subsection{Energy functional}
We define
\[
\cH = \cH_1+\cH_2+\cH_3
\] 
where
\[
\cH_1 = \int \rho \left(|\nabla\ep|^2+\eta^2- 2 (F (\bW +\ep )-F (\bW )-f (\bW )\ep ) \right),
\]
\[
\cH_2 = 2 \int (\bC\cdot \nabla \ep ) \eta,\quad
\cH_3 = 4 \int \Phi \ep \eta.
\]
\begin{remark}
The first part $\cH_1$ of the energy functional has a classical expression for a linearized
wave problem around an approximate solution $\bW$.
The second part $\cH_2$, where $\bC\approx x$ far from the solitons, but
$\bC\approx \bl_k t$ close to the soliton $W_k$ allows to have an energy functional 
which is adapted to each soliton $W_k$.
The third part is algebraically important to compensate the presence of the term $\cH_2$
far from the solitons
when differenting $\cH$.
The combinaison of the two parts $\cH_2$ and $\cH_3$ in this energy functional is one of the
key points of this paper compared to \cite{MaM1} and \cite{MaM2}.
It allows to take into account the dimension of the space, while the functional
used in \cite{MaM1} and \cite{MaM2} are inherited from $1$D constructions.
\end{remark}

\subsection{Heuristics}\label{s:3.3}

Formally, the functional $\cH$ has different expressions according to the following three space-time regions:
\begin{itemize}
\item Outside the wave cone: 
\[
 \Omega_{\rm out} = \{(t,x)\in \RR\times\RR^5 : |x|\geq t\}.
\]
\item Inside the wave cone, far from the solitons
\[
 \Omega_{\rm inter} = \{(t,x)\in \RR\times\RR^5 : |x|<t, \ |x-\bl_k t-\by_k^\infty|\gg 1\}.
\]
\item Close to the solitons:
\[
B_k = \{(t,x) : |x - \bl_k t -\by_k^\infty|\ll t\}
\]
\end{itemize}
For these heuristics, we systematically neglect the soliton $W_k$ outside $B_k$ and
all the nonlinear terms in $\ep$.
We also discard boundary terms while integrating by parts, because they compensate 
when gathering the computations.
Close to the soliton $k$, i.e., in the ball $B_k$, we take $q_k\equiv 1$ and far from the soliton $k$, we take
 $q_k\equiv 0$.
Moreover, we formally take $ \alpha=0$ in the definition of the functions
$\Theta$ and $\rho$.

We formally justify a coercivity property for $\cH$
and a differential inequality on $\cH$ of the form $\cH'+\frac2t\cH\geq0$,
up to error terms that are ignored

\medskip

\emph{Outside the wave cone}, we have the corresponding part of the functional
\[
\cH_{\rm out} 
=\int_{\Omega_{\rm out}} |x|\left(|\nabla\ep|^2+\eta^2\right)
+ 2 (x\cdot\nabla\ep)\eta + 4\int_{\Omega_{\rm out}} \ep\eta.
\]
For the positivity of $\cH_{\rm out}$, we proceed as follows
\begin{align*}
\cH_{\rm out} 
&\geq \int_{\Omega_{\rm out}} |x|\left(\frac{x}{|x|}\cdot\nabla\ep + \eta\right)^2
+ 4\int_{\Omega_{\rm out}} \ep\eta\\
&\geq\int_{\Omega_{\rm out}} |x|\left(\frac{x}{|x|}\cdot\nabla\ep + \eta\right)^2
+ 4\int_{\Omega_{\rm out}} \ep\left(\frac{x}{|x|}\cdot\nabla\ep + \eta\right)
- 4\int_{\Omega_{\rm out}} \ep\left(\frac{x}{|x|}\cdot\nabla\ep \right)\\
&\geq\int_{\Omega_{\rm out}} |x|\left(\frac{x}{|x|}\cdot\nabla\ep + \eta +2\frac{\ep}{|x|}\right)^2
-4\int_{\Omega_{\rm out}}\frac{\ep^2}{|x|}
+ 2 \int \ep^2 \DV\left(\frac{x}{|x|}\right).
\end{align*}
Since $\DV (\frac{x}{|x|} ) = \frac4{|x|}$, we obtain
\[
\cH_{\rm out} 
\geq\int_{\Omega_{\rm out}} |x|\left(\frac{x}{|x|}\cdot\nabla\ep + \eta +2\frac{\ep}{|x|}\right)^2
+4\int_{\Omega_{\rm out}}\frac{\ep^2}{|x|}.
\]
Moreover, in the region $ \Omega_{\rm out}$, the equation reduces formally to
\[
\partial_t \ep = \eta, \quad \partial_t \eta = \Delta \ep.
\]
Thus,
\begin{align*}
\frac d{dt} \cH_{\rm out}
& = 2 \int_{\Omega_{\rm out}} |x|\left(\nabla\partial_t\ep\cdot \nabla \ep +\eta\partial_t \eta \right)
+ 2\int (x\cdot\nabla\partial_t\ep)\eta
+ 2 \int(x\cdot\nabla\ep)\partial_t\eta \\
&\quad + 4\int_{\Omega_{\rm out}} \left(\eta\partial_t\ep+ \ep\partial_t\eta\right)\\
& = 2 \int_{\Omega_{\rm out}} |x|\left(\nabla\eta \cdot\nabla \ep +\eta\Delta \ep
+ 2 (x\cdot\nabla\eta)\eta
+ 2 (x\cdot\nabla\ep)\Delta\ep\right)  + 4\int_{\Omega_{\rm out}} \left(\eta^2+ \ep\Delta\ep\right)\\
&=- \int_{\Omega_{\rm out}} |\nabla\ep|^2 + \eta^2 + 2 \biggl(\frac{x}{|x|}\cdot \nabla \ep\biggr) \eta.
\end{align*}
Since
\[
|\nabla\ep|^2 + \eta^2 + 2 \biggl(\frac{x}{|x|}\cdot \nabla \ep\biggr) \eta
\geq \biggl( \frac{x}{|x|}\cdot\nabla\ep+\eta\biggr)^2 \geq 0,
\]
we have
\[
 \int_{\Omega_{\rm out}} |\nabla\ep|^2 + \eta^2 + 2 \biggl(\frac{x}{|x|}\cdot \nabla \ep\biggr) \eta 
 \leq \frac 1t \int_{\Omega_{\rm out}} |x| \left(|\nabla\ep|^2+\eta^2\right)
+ 2 (x\cdot \nabla\ep)\eta 
\]
and
\begin{align*}
\frac d{dt} \cH_{\rm out}+\frac2t\cH_{\rm out}
&\geq\frac 1t \int_{\Omega_{\rm out}} |x| \biggl( \frac{x}{|x|}\cdot\nabla\ep+\eta\biggr)^2
+ \frac 8t\int_{\Omega_{\rm out}}\ep\eta.
\end{align*}
As before,
\begin{align*}
&\int_{\Omega_{\rm out}} \frac{|x|}{t} \biggl( \frac{x}{|x|}\cdot\nabla\ep+\eta\biggr)^2
+\frac 8t\int_{\Omega_{\rm out}}\ep\eta\\
&\quad = \int_{\Omega_{\rm out}} \frac{|x|}{t} \biggl( \frac{x}{|x|}\cdot\nabla\ep+\eta\biggr)^2
+ \frac 8t\int_{\Omega_{\rm out}}\ep\biggl(\frac{x}{|x|}\cdot\nabla\ep+\eta\biggr)
- \frac 4t\int_{\Omega_{\rm out}}\ep^2\DV\biggl(\frac{x}{|x|}\biggr)\\
&\quad \geq \int_{\Omega_{\rm out}} \frac{|x|}{t} \biggl( \frac{x}{|x|}\cdot\nabla\ep+\eta
+\frac 4{|x|} \ep\biggr)^2.
\end{align*}
Observe that the cancellation between
the terms $-16\int_{\Omega_{\rm out}} {\ep^2}/{|x|}$ coming from completing the square
and 
$- \frac 4t\int_{\Omega_{\rm out}}\ep^2\DV ( {x}/{|x|} )
=\frac{16}t \int_{\Omega_{\rm out}}{\ep^2}/{|x|}$.

Therefore, in the region $\Omega_{\rm out}$, we have obtained formally
\[
\frac d{dt} \cH_{\rm out}+\frac2t\cH_{\rm out}\geq0.
\]

\medskip

\emph{Inside the wave cone but far from the solitons}, we have
\begin{align*}
\cH_{\rm inter} 
& = t\int_{\Omega_{\rm inter}} \left(|\nabla\ep|^2+\eta^2\right)
+ 2 \int_{\Omega_{\rm inter}} (x\cdot\nabla\ep)\eta
+4\int_{\Omega_{\rm inter}}\ep\eta\\
& \geq\int_{\Omega_{\rm inter}} t \biggl(\frac{x}t\cdot\nabla\ep+\eta\biggr)^2
+ t\int_{\Omega_{\rm inter}} \biggl(1-\frac{|x|^2}{t^2}\biggr)|\nabla\ep|^2\\
&\quad +4 \int_{\Omega_{\rm inter}} \ep \biggl(\eta+\frac xt\cdot\nabla\ep\biggr)
-4\int_{\Omega_{\rm inter}} \ep\biggl(\frac{x}t\cdot\nabla\ep\biggr)\\
&\geq \int_{\Omega_{\rm inter}} t\biggl(\frac{x}t\cdot\nabla\ep+\eta+2\frac\ep t\biggr)^2
+ t\int_{\Omega_{\rm inter}} \biggl(1-\frac{|x|^2}{t^2}\biggr)|\nabla\ep|^2
+16\int_{\Omega_{\rm inter}} \frac{\ep^2}{t}.
\end{align*}
Moreover, in the region $ \Omega_{\rm inter}$, the equation also reduces formally to
\[
\partial_t \ep = \eta, \quad \partial_t \eta = \Delta \ep
\]
and so
\begin{align*}
\frac d{dt} \cH_{\rm inter} & = \int_{\Omega_{\rm inter}} \left(|\nabla\ep|^2+\eta^2\right)
+ 2 t\int_{\Omega_{\rm inter}} \left(\nabla\eta\cdot\nabla\ep+\eta\Delta\ep\right)\\
&\quad
+ 2 \int_{\Omega_{\rm inter}} (x\cdot\nabla\eta)\eta+2 \int_{\Omega_{\rm inter}}(x\cdot\nabla\ep)\Delta\ep
+ 4 \int_{\Omega_{\rm inter}} (\eta^2 +\ep\Delta\ep)= 0.
\end{align*}

\medskip

\emph{Lastly, in the vicinity of soliton $W_k$,} we have
\[
\cH_{B_k} = t \int_{B_k} \left( |\nabla\ep|^2
+\eta^2+ 2 (\bl_k\cdot\nabla\ep)\eta
-\frac 73 W_k^\frac43\ep^2\right).
\]
The coercivity of this functional, up to the two exponential instable directions (backward and forward),
and the non-trivial kernel,
is well-known (see for example \cite[Lemma 2.2]{MaM2}).
Moreover, the equation of $(\ep, \eta)$ in $B_k$ is formally reduced to
\[
\partial_t \ep = \eta, \quad 
\partial_t \eta = \Delta \ep + \frac 73 W_k^\frac43 \ep
\]
(we discard modulation terms, that will vanish at the main order, and nonlinear terms).
Using this equation and then integrating by parts, we check that 
\begin{align*}
\frac{d}{dt}\cH_{B_k}
& =\frac 1t\cH_{B_k}
+2 t\int_{B_k} \left(\nabla \eta\cdot\nabla\ep
+\eta\Bigl(\Delta\ep+\frac 73W_k^\frac43\ep\Bigr)\right)
\\
&\quad 
+2t\int_{B_k} \left((\bl_k\cdot\nabla\eta)\eta + (\bl_k\cdot\nabla\ep)\Bigl(\Delta\ep+
\frac73 W_k^\frac73\ep\Bigr)
-\frac 73 W_k^\frac43\ep \eta \right)\\
&\quad + t \frac 73\frac43\int_{B_k} (\bl_k\cdot\nabla W_k) W_k^\frac13 \ep^2\\
& =\frac 1t\cH_{B_k}\geq 0.
\end{align*}

In the next two subsections, we give a rigorous justification of the above heuristics.

\begin{remark}
The different cut-off functions $\rho$, $\Theta$, $\bC$ are introduced for various reasons.
First, it is important to split $\RR\times\RR^5$ into the regions $|x|>Lt$ and $|x|<Lt$
(instead of exactly $|x|>t$ and $|x|<t$) to obtain the coercivity of $\cH$.
Second, the decay of the approximate multi-soliton, as described in Proposition \ref{le:as}
is not sufficient to use the more simple weight function $\rho=|x|/L$ for $|x|>Lt$
instead of the one defined in the notation section.
Third, the function $\bC$ looks like $x$, which allows to connect the functionals adapted to each of the solitons.
Lastly, the function $\qk$ used to localized around the soliton $W_k$ has a weak decay for
$\alpha>0$ small, but it seems that we cannot use a stronger localization, because of
error terms coming from derivatives of this localization function.
Here, error terms will be controled using the fact that $\alpha$ is small.
\end{remark}

\subsection{Coercivity of the energy}
\begin{lemma}\label{le:co}
There exists $\mu>0$ such that
\[
\cH
\geq \mu \cN^2-\frac 1{\mu} \sum_k \left((z_k^-)^2+ (z_k^+)^2\right).
\]
\end{lemma}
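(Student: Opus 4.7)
The plan is to make rigorous the formal region-by-region computation of \S\ref{s:3.3}, using the cutoffs $\Theta$, $\psi$, $\qk$ as a $\cC^1$ partition of unity and absorbing the error terms generated by their derivatives via the smallness of $\alpha$ and the Hardy-type bounds of \eqref{eq:e1}. First I would expand the nonlinear piece of $\cH_1$ by Taylor: using Lemma \ref{le:43}, the bootstrap \eqref{eq:en}, Sobolev and Hardy, one obtains
\[
-2\int\rho\bigl(F(\bW+\ep)-F(\bW)-f(\bW)\ep\bigr)=-\tfrac{7}{3}\sum_k\int\rho W_k^{4/3}\ep^2+o(\cN^2)+O(t^{-\alpha}\cN^2),
\]
so the leading quadratic form of $\cH$ splits into a sum of contributions localized near each soliton $W_k$ plus an exterior contribution where the nonlinear term is negligible.

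Close to the soliton $W_k$, \eqref{eq:cl} gives $|\bC-\bl_k t|\lesssim q_k^{-1}$ and the definitions yield $\Phi\lesssim\alpha q_k^{-2}$; together with $\rho\approx t$ on $B_k$, the localized quadratic form becomes
\[
t\int_{B_k}\Bigl(|\nabla\ep|^2+\eta^2+2(\bl_k\cdot\nabla\ep)\eta-\tfrac{7}{3}W_k^{4/3}\ep^2\Bigr)+O(\alpha)\cN^2.
\]
This is the standard linearized form associated with the operator $\cL_{\bl_k}$ at the boosted soliton $W_{\bl_k}$; by the classical spectral analysis (as in \cite[Lemma 2.2]{MaM2}), it is bounded below by a constant multiple of $t\int_{B_k}(|\nabla\ep|^2+\eta^2)$ modulo the kernel directions $\Lambda_k W_k,\partial_j W_k$ and the two unstable modes $\vec Z_{\bl_k}^\pm$. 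The orthogonality \eqref{eq:or} kills the kernel directions and the projections on the unstable directions are precisely $z_k^\pm$, yielding the desired coercivity on $B_k$ modulo the $-\sum_k((z_k^-)^2+(z_k^+)^2)$ correction.

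In the exterior region, where every $W_k\lesssim t^{-3}$ and the nonlinear term is negligible, I would repeat the completion-of-squares argument of \S\ref{s:3.3} with $\bC$ in place of $x$ and $\Phi$ in place of $1$. The crucial algebraic input is \eqref{eq:GG}: one has $\partial_j\bC_j=\Phi+O(\alpha\Theta)$ and $\partial_i\bC_j=O(\alpha\Theta)$ for $i\neq j$, so an integration by parts on $2\int(\bC\cdot\nabla\ep)\eta$ combined with the term $4\int\Phi\ep\eta$ of $\cH_3$ reproduces, up to $O(\alpha)$ errors, the same algebraic cancellations that produce a positive Hardy-type remainder in the heuristics; the $O(\alpha)$ defects are absorbed via the weighted estimates \eqref{eq:e1}.

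The main obstacle will be the patching step. Derivatives of the cutoffs and the deviations $|\bC-\bl_k t|$ on $B_k$ and $|\bC-x|$ on the exterior produce cross terms of the form $\alpha\int\Theta q^{a}\ep^2$ or $\alpha\int\rho|x|^{-1}|\nabla\ep||\eta|$. Thanks to \eqref{eq:e1}, all such terms are bounded by $C\alpha\cN^2$ with $C$ independent of $\alpha$, so fixing $\alpha$ sufficiently small makes the total error strictly smaller than the main coercive term, and adding the near-soliton and exterior estimates delivers the claimed inequality $\cH\geq\mu\cN^2-\mu^{-1}\sum_k((z_k^-)^2+(z_k^+)^2)$ with some $\mu>0$ independent of $\alpha,\delta,T_0$.
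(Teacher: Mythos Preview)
Your strategy is the paper's strategy: localize near each soliton and invoke the coercivity of the linearized operator (under the orthogonality \eqref{eq:or} and modulo $z_k^\pm$), complete the square far away, and absorb commutator errors. But there is a genuine gap in the near-soliton step as you wrote it.

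You claim that on $B_k$ the cross term $2\int(\bC\cdot\nabla\ep)\eta$ reduces to $2t\int_{B_k}(\bl_k\cdot\nabla\ep)\eta$ up to $O(\alpha)\cN^2$, citing $|\bC-\bl_k t|\lesssim q_k^{-1}$ from \eqref{eq:cl}. This is false as stated: on $B_k$ one only has $q_k^{-1}\lesssim t=\rho$, so the remainder $\int_{B_k}|\bC-\bl_k t|\,|\nabla\ep||\eta|$ is bounded by $\cN^2$ with no small factor, neither $\alpha$ nor $t^{-\alpha}$. The second part of \eqref{eq:cl}, $|\bC-\bl_k t|\lesssim\alpha q_k^{-3}$, does not help either (it blows up like $t^3$ on $B_k$). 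The paper resolves this by localizing with the \emph{soft} weight $q_k^\alpha$ rather than the characteristic function of $B_k$: the gain is that $q_k^\alpha|\bC-\bl_k t|\lesssim q_k^{\alpha-1}\lesssim t^{1-\alpha}=t^{-\alpha}\rho$, so the error is $O(t^{-\alpha})\cN^2$, small for $t$ large once $\alpha$ is fixed. The same mechanism governs the term $|1-\psi-\sum_k q_k^\alpha|\lesssim t^{-\alpha}$. In other words, several of the patching errors are small in $t^{-\alpha}$, not in $\alpha$; your blanket assertion that ``all such terms are bounded by $C\alpha\cN^2$'' does not hold.

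A second, related imprecision: in the exterior you propose to complete the square ``with $\bC$ in place of $x$''. The algebraic identity behind the heuristic (your Lemma~\ref{le:sk} substitute) relies on the exact relation $\DV(x/|x|)=4/|x|$, which has no clean analogue for $\bC$. The paper instead keeps the genuine $x$ in the exterior quadratic form (this is $h_2$, with weight $\Phi|x|$) and treats the discrepancy $\psi(\bC-x\Theta)$ as a separate error term ($h_4$), again of size $O(t^{-\alpha})\cN^2$. With these two corrections --- soft cutoff $q_k^\alpha$ for the near-soliton localization, and the $x$-based (not $\bC$-based) completion of the square far away --- your outline becomes exactly the paper's proof, carried out via the explicit decomposition $\cH=h_1+\cdots+h_7$.
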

\begin{proof}
\emph{Decomposition of $\cH$.}
The decomposition of $\cH$ into three pieces $\cH_1+\cH_2+\cH_3$
in its definition is well-adapted to compute its time derivative
using the equations of $(\ep, \eta)$.
To prove the coercivity property, 
we need to decompose $\cH$ in a different way, taking into account the various regions
(close to solitons and far from the solitons).
We set
\[
\cH = \sum_{j=1}^7 h_j
\]
where the terms $h_j$ are defined as described below.
\begin{itemize}
\item The first term 
\[
h_1 =
\int \rho (1-\psi)\left(|\nabla\ep|^2+\eta^2\right) 
+ \sum_k \int \rho 
\left( 2 \qk(\bl_k\cdot \nabla \ep ) \eta 
- f'(W_k) \ep^2\right)
\]
concerns the regions close to the various solitons,
since the functions $1-\psi$, $\qk$ and $f'(W_k)$ are localized around the solitons.
To estimate this term we will use a localized variant of a standard positivity property of the 
linearized operator $\cL$ (under orthogonality conditions).
\item The second term 
\[
h_2 = \frac 1L\int \Phi |x| (|\nabla\ep|^2+\eta^2)
+2 \int \Phi(x\cdot \nabla \ep) \eta +4 \int \Phi \ep \eta 
\]
concerns the region far from solitons
for which we will complete the square to obtain positivity.

\item The third term
\[
h_3 =\frac 1L \int \psi ( L \rho - |x| \Theta) (|\nabla\ep|^2+\eta^2)
\]
concerns the intermediate region,
since the function $\Phi$ is localized outside the soliton region, while for $|x|>Lt$, it holds $L \rho = |x|\Theta$ so that the integrant vanishes.
\item The fourth term contains an error term due to the cut-off around
each soliton
\[
h_4=
2 \int \psi \left(( {\bC} -x\Theta ) \cdot \nabla \ep\right) \eta
\]
since 
the function $\bC-x \Theta$ is localized on the soliton regions
(in some sense).
\item The fifth term is a similar error term
\[
h_5 = 
2 \int \Bigl(\Bigl( (1-\psi) \bC- \rho \sum_k \qk \bl_k \Bigr) \cdot \nabla\ep\Bigr)\eta
\]
since $1-\psi \approx \sum_k \qk$ and $\qk$ localizes on the $k$th soliton region, while
$\bC-\rho \bl_k\approx\bC- \bl_k t$ is small close to the $k$th soliton.
\item The last two terms are error terms due to the non linear interactions
\begin{align*}
h_6 &= - 2 \int \rho \Bigl(F (\bW +\ep )-F (\bW )-f (\bW )\ep -\frac 12 f'(\bW) \ep^2\Bigr) 
\\
h_7 &= \int \rho \Big(\sum_k f'(W_k) - f'(\bW)\Big) \ep^2 ,
\end{align*}
the term $h_6$ contains cubic terms and higher in $\ep$ while
the term $h_7$ is small since different solitons interact weakly
at large distances.
\end{itemize}
To check the above decomposition of $\cH$, we first decompose $\cH_1$.
Using
\begin{equation}\label{eq:dn}
\rho = \rho (1-\psi) + \rho\psi
=\rho(1-\psi) + \frac 1L\Phi |x| + \frac\psi L(L\rho - |x|\Theta),
\end{equation}
we obtain
\begin{align*}
\cH_1 & = 
\int \rho (1-\psi) \left(|\nabla\ep|^2+\eta^2\right) 
- \sum_k \int \rho f'(W_k) \ep^2\\
&\quad
+\int \frac 1L\Phi |x| \left(|\nabla\ep|^2+\eta^2\right) 
+\int \frac\psi L(L\rho - |x|\Theta)\left(|\nabla\ep|^2+\eta^2\right) \\
&\quad
-2 \int \rho \Big (F (\bW +\ep )-F (\bW )-f (\bW )\ep -\frac 12 \sum_kf'(W_k) \ep^2\Big) 
\end{align*}
so that $\cH_1$ contributes to $h_1$, $h_2$, $h_3$, $h_6$ and $h_7$.
Second,
\begin{align*}
\cH_2 & = 2 \int (1-\psi) (\bC\cdot \nabla\ep)\eta
+ 2 \int\psi (\bC\cdot\nabla\ep)\eta\\
&= 2 \sum_k \int \rho \qk (\bl_k \cdot \nabla\ep)\eta
+ 2 \int \Phi (x\cdot\nabla\ep)\eta\\
&\quad
+2 \int \psi \left( \bC - x\Theta ) \cdot \nabla \ep\right) \eta
+2 \int \Bigl(\Bigl( (1-\psi) \bC- \rho \sum_k \qk \bl_k \Bigr) \cdot \nabla\ep\Bigr)\eta,
\end{align*}
so that $\cH_2$ contributes to $h_1$, $h_2$, $h_4$ and $h_5$.
Lastly, the term $\cH_3$ only contributes to $h_2$.
\medskip

\emph{Estimate of $h_1$.}
We claim that for some constants $\mu_0>0$, $C>0$,
\begin{equation}\label{eq:h1}
h_1 
\geq \mu_0 \int \rho (1-\psi)(|\nabla \ep|^2 + \eta^2)
-\frac 1{\mu_0} \sum_k \left((z_k^-)^2+ (z_k^+)^2\right) 
 - C t^{-\alpha} \int \rho (|\nabla \ep|^2 + \eta^2).
\end{equation}
Define
\[
\tilde h_{1,k} =
\int \qk \left(|\nabla\ep|^2+\eta^2 
+ 2 (\bl_k\cdot \nabla \ep ) \eta \right)
- f'(W_k) \ep^2.
\]
By Lemma 2.2 (ii) in \cite{MaM1}, there exists $\mu_0\in(0,1)$ such that
\[
\tilde h_{1,k} \geq {\mu_0} \int (|\nabla \ep|^2 + \eta^2) \qk
-\frac 1{\mu_0}  \left((z_k^-)^2+ (z_k^+)^2\right).
\]
Now, we estimate the difference
\begin{align*}
h_1 - t \sum_k \tilde h_{1,k}
& = \int (|\nabla \ep|^2 + \eta^2) \Bigl(\rho(1-\psi) - t \sum_k \qk\Bigr)\\
&\quad+ \sum_k \int (\rho-t) \left(2\qk(\bl_k\cdot \nabla \ep ) \eta 
- f'(W_k) \ep^2 \right).
\end{align*}
Observe that
\[
\Bigl|\rho(1-\psi) - t \sum_k \qk\Bigr|\lesssim 
|\rho-t|(1-\psi)+ t \Bigl|1-\psi - \sum_k \qk\Bigr|
\lesssim \rho t^{-\alpha} 
\]
and 
\[
|\rho-t| \qk\lesssim \rho t^{-\alpha},\quad |\rho-t||f'(W_k)|\lesssim |\rho-t|q_k^4
\lesssim \rho q_k^{2} t^{-\alpha},
\]
so that using \eqref{eq:e1},
\[
\Bigl|h_1 - t \sum_k \tilde h_{1,k}\Bigr|
\lesssim t^{-\alpha} \int \rho (|\nabla \ep|^2 + \eta^2).
\]
The above estimates prove \eqref{eq:h1}.

\medskip

\emph{Estimate of $h_2$.}
We claim
\begin{equation}\label{eq:h2}
h_2 \geq \sigma \int \Phi |x| (|\nabla\ep|^2+\eta^2)
-C \alpha \int \rho|\nabla\ep|^2.
\end{equation}
We start with a simple yet key computation.
\begin{lemma}\label{le:sk}
It holds
\[
\int \Phi |x| \biggl(\frac{x}{|x|}\cdot\nabla\ep+\eta\biggr)^2
+8 \int \Phi\ep\eta
=\int \Phi |x| \biggl(\frac{x}{|x|}\cdot\nabla\ep+\eta+4 \frac{\ep}{|x|}\biggr)^2 
+ 4\int\biggl( \nabla \Phi\cdot \frac{x}{|x|}\biggr) \ep^2.
\]
\end{lemma}
\begin{proof}
We expand the square
\begin{align*}
\int \Phi |x| \biggl(\frac{x}{|x|}\cdot\nabla\ep+\eta+4 \frac{\ep}{|x|}\biggr)^2
&=\int \Phi |x| \biggl(\frac{x}{|x|}\cdot\nabla\ep+\eta\biggr)^2
+ 8 \int \Phi \ep \eta \\
&\quad + 8 \int \Phi \biggl(\frac{x}{|x|}\cdot\nabla\ep\biggr) \ep
+16\int \Phi\frac{\ep^2}{|x|}.
\end{align*}
Integrating by parts, we obtain
\begin{align*}
8 \int \Phi \biggl(\frac{x}{|x|}\cdot\nabla\ep\biggr) \ep
& = -4\int \DV \biggl( \Phi\frac{x}{|x|}\biggr) \ep^2\\
& = -4 \int \biggl( \nabla \Phi\cdot \frac{x}{|x|}\biggr)\ep^2 
- 4 \int \Phi \DV\biggl(\frac{x}{|x|}\biggr)\ep^2 .
\end{align*}
Since $\DV\bigl( {x}/{|x|}\bigr)= 4/{|x|}$, we obtain the result.
\end{proof}
Now, we decompose $h_2$ as follows
\begin{align*}
h_2 & = \left(\frac 1L-1\right)\int \Phi |x| (|\nabla\ep|^2+\eta^2)
+\int \Phi |x| \biggl(\frac{x}{|x|}\cdot\nabla\ep+\eta\biggr)^2
+4 \int \Phi \ep \eta \\
&\quad +\int \Phi\biggl(|x||\nabla\ep|^2-\frac{|x\cdot\nabla\ep|^2}{|x|}\biggr).
\end{align*}
The last term is non negative and we use Lemma \ref{le:sk} to obtain
\begin{align*}
h_2 & \geq \left(\frac 1L-1\right)\int \Phi |x| (|\nabla\ep|^2+\eta^2)
+\frac 12 \int \Phi |x| \biggl(\frac{x}{|x|}\cdot\nabla\ep+\eta\biggr)^2\\
&\quad +\frac 12 \int \Phi |x| \biggl(\frac{x}{|x|}\cdot\nabla\ep+\eta+4 \frac{\ep}{|x|}\biggr)^2 
+ 2\int\biggl( \nabla \Phi\cdot \frac{x}{|x|}\biggr) \ep^2.
\end{align*}
By \eqref{eq:pk} and \eqref{eq:Ph} and then \eqref{eq:e1},
\[
\biggl| \int\biggl( \nabla \Phi\cdot \frac{x}{|x|}\biggr) \ep^2\biggr|
\lesssim \alpha \int \Theta q\ep^2
\lesssim \alpha \int \rho |\nabla\ep|^2.
\]
Thus, \eqref{eq:h2} follows from $L=1-\sigma$.
\medskip

\emph{Estimate of $h_3$.}
We simply observe that by the definitions of $\rho$ and $\Theta$,
\[
h_3 =\frac 1L \int \psi(L\rho-|x|\Theta)(|\nabla\ep|^2+\eta^2)\geq 0.
\]

\emph{Conclusion for $h_1$, $h_2$ and $h_3$.}
The identity \eqref{eq:dn} allowed us to decompose $\cH_1$ into
different parts; we now use it again, to obtain a positivity result on the sum
$h_1+h_2+h_3$ (up to the instable directions).
Indeed, by \eqref{eq:h1}, \eqref{eq:h2} and the expression of $h_3$,
setting $\mu= \frac14\min(\mu_0,L\sigma) >0$, and
using \eqref{eq:dn}, we have proved
\[
h_1+h_2+h_3 \geq 4\mu \int \rho (|\nabla\ep|^2+\eta^2)-\frac 1{\mu} \sum_k \left((z_k^-)^2+ (z_k^+)^2\right)
-C\left(t^{-\alpha}+\alpha\right) \int \rho (|\nabla\ep|^2+\eta^2)
\]
and thus,
for $\alpha$ sufficiently small and for $t$ large (depending on $\alpha$ yet to be fixed),
\begin{equation}\label{eq:hh}
h_1+h_2+h_3 \geq 2\mu \int \rho (|\nabla\ep|^2+\eta^2)-\frac 1{\mu} \sum_k \left((z_k^-)^2+ (z_k^+)^2\right).
\end{equation}

\medskip

\emph{Estimate of $h_4$.}
We claim
\begin{equation}\label{eq:h4}
|h_4| \lesssim t^{-\alpha} \int \rho (|\nabla\ep|^2+|\eta|^2).
\end{equation}
Indeed, by $0\leq\psi\leq1$, \eqref{eq:dx} and then $|\BF_k|\lesssim t^{1-\alpha}$
from \eqref{eq:fk}, we have
\[
\psi|\bC - x\Theta| 
\lesssim  \sum_k \left(|x-\bl_k t - \by_k^\infty| \qk+|\BF_k|\right)
\lesssim \sum_k q_k^{-1+\alpha} + t^{1-\alpha}\lesssim t^{-\alpha} \rho,
\]
and \eqref{eq:h4} follows.

\medskip

\emph{Estimate of $h_5$.} We claim
\begin{equation}\label{eq:h5}
|h_5| \lesssim t^{-\alpha} \int \rho (|\nabla\ep|^2+|\eta|^2).
\end{equation}
We decompose
\[
h_5 = 
2 \int \Bigl( \Bigl(1-\psi-\sum_k q_k^\alpha\Bigr)\Bigr) (\bC\cdot \nabla\ep)\eta
+2\sum_k \int \qk ((\bC - \rho \bl_k )\cdot \nabla\ep)\eta.
\]
For the first term on the right-hand side, since 
$|1-\psi-\sum_k q_k^\alpha|\lesssim t^{-\alpha}$ 
and $|\bC|\lesssim \rho$, we have
\[
\Bigl|\int \Bigl( \Bigl(1-\psi-\sum_k q_k^\alpha\Bigr)\Bigr) (\bC\cdot \nabla\ep)\eta\Bigr|\lesssim t^{-\alpha} \int \rho (|\nabla\ep|^2+\eta^2).
\]
For the second term, we fix $k$ and we distinguish the two cases $x\in B_k$
and $x\not\in B_k$.
If $x\in B_k$ then using \eqref{eq:cl},
$\qk|\bC-\rho \bl_k|=\qk |\bC-t \bl_k|\lesssim q_k^{-1+\alpha}\lesssim t^{1-\alpha}= t^{-\alpha}\rho$.
Thus,
\[
\Bigl|\int_{B_k}\int \qk((\bC - \rho\bl_k )\cdot \nabla\ep)\eta\Bigr|
\lesssim t^{-\alpha} \int \rho (|\nabla\ep|^2+|\eta|^2).
\]
If $x\not\in B_k$, then $\qk\lesssim t^{-\alpha}$ and
$|\bC-\bl_k \rho|\lesssim \rho$ and so
\[
\Bigl|\int_{\RR^5\setminus B_k}\int \qk((\bC-\rho \bl_k )\cdot \nabla\ep)\eta\Bigr|
\lesssim t^{- \alpha} \int \rho (|\nabla\ep|^2+|\eta|^2).
\]
The estimate \eqref{eq:h5} is thus proved.

\medskip

\emph{Estimate of $h_6$.} We claim
\begin{equation}\label{eq:h6}
|h_6| \lesssim t^{-\frac12} \int \rho |\nabla\ep|^2 .
\end{equation}
Note that
\[
\Bigl|F (\bW +\ep )-F (\bW )-f (\bW )\ep -\frac 12 f'(\bW) \ep^2\Bigr|
\lesssim |\bW|^{\frac 13} |\ep|^3 + |\ep|^{\frac{10}3}.
\]
Recall from \eqref{eq:bW} that 
$|\bW|\lesssim q^{3}+t^{-1}q^{2}\lesssim q^2$.
Thus, by the Hölder inequality,
\[
\int\rho|\bW|^\frac13|\ep|^3
\lesssim \left(\int |\rho^\frac13\ep|^\frac{10}3\right)^\frac{9}{10}
\left(\int |\bW|^\frac{10}3\right)^{\frac1{10}} 
\lesssim \left(\int |\nabla (\rho^\frac13\ep)|^2\right)^{\frac32}.
\]
Then, by the Sobolev inequality and
\eqref{eq:e1}, we obtain
\begin{align*}
\left(\int |\nabla (\rho^\frac13\ep)|^2\right)^{\frac32}
&\lesssim \left(\int \rho^{\frac23}|\nabla\ep|^2\right)^{\frac32}
+\left(\int\rho^{\frac23}|x|^{-2}\ep^2\right)^{\frac32}\\
&\lesssim t^{-\frac12} \left(\int \rho |\nabla\ep|^2\right)^{\frac32}
\lesssim t^{-\frac12} \int \rho |\nabla\ep|^2 ,
\end{align*}
where by \eqref{eq:BS}, we have estimated $\int \rho |\nabla\ep|^2\lesssim1$.
Similarly,
\[
\int \rho|\ep|^\frac{10}3
\lesssim\left(\int |\nabla (\rho^\frac3{10}\ep)|^2\right)^{\frac53}
\lesssim t^{-\frac23} \left(\int \rho |\nabla\ep|^2\right)^{\frac53}
\lesssim t^{-\frac23} \int \rho |\nabla\ep|^2 .
\]

\emph{Estimate of $h_7$.} We claim
\begin{equation}\label{eq:h7}
|h_7| \lesssim t^{-1}\|\rho\nabla \ep\|_{L^2}^2.
\end{equation}
On $\Omega=\RR^5\setminus\cup_mB_m$, using the expression of $W_k$ and
\eqref{eq:Av}, one has
\[
|W_k|+|v_k|\lesssim t^{-1}q^2
\]
and so
\[
\Bigl|\sum_k f'(W_k)-f'(\bW)\Bigr|
\lesssim \sum_k |W_k|^\frac43+|\bW|^\frac43\lesssim t^{-1}q^2.
\]
On $B_k$, for $m\neq k$, one has $|W_m|+|v_k|+|v_m|\lesssim t^{-1}q^2$
and thus
\begin{align*}
\Bigl|\sum_k f'(W_k)-f'(\bW)\Bigr|
&\lesssim \sum_{m\neq k} |W_m|^\frac43
+|W_k|^\frac43\Bigl|1-\Bigl|1+\sum_{m\neq k} W_m W_k^{-1} + \sum_m v_m W_k^{-1}\Bigr|^\frac43\Bigr|\\
&\lesssim \sum_{m\neq k} |W_m|^\frac43
+|W_k|^\frac13\Bigl(\sum_{m\neq k}|W_m|+\sum_m |v_m|\Bigr)\lesssim t^{-1}q^2.
\end{align*}
Therefore, using \eqref{eq:e1},
\[
|h_7|\lesssim t^{-1} \int \rho q^2 \ep^2 \lesssim t^{-1} \int \rho|\nabla\ep|^2.
\]

In conclusion, using \eqref{eq:hh}, \eqref{eq:h4}, \eqref{eq:h5}, \eqref{eq:h6}
and \eqref{eq:h7}, we obtain
\begin{align*}
\cH 
&\geq 2\mu \int \rho (|\nabla\ep|^2+\eta^2)-\frac 1{\mu} \sum_k \left((z_k^-)^2+ (z_k^+)^2\right)
- C t^{-\alpha} \int \rho (|\nabla\ep|^2+\eta^2)\\
&\geq \mu \int \rho (|\nabla\ep|^2+\eta^2)-\frac 1{\mu} \sum_k \left((z_k^-)^2+ (z_k^+)^2\right)
\end{align*}
for $t$ sufficiently large.
\end{proof}

\subsection{Variation of the energy}
\begin{lemma}\label{le:en}
For $\alpha>0$ small enough, 
for all $t\in I$, it holds
\[
\cH' + \frac 2 {L^2}\frac \cH t \gtrsim_\delta 
- t^{-6+3\delta} .
\]
\end{lemma}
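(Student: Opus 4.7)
The strategy is to mimic rigorously the heuristic computation of Section \ref{s:3.3}, differentiating $\cH = \cH_1+\cH_2+\cH_3$ along the flow of $(\ep,\eta)$ given by Lemma \ref{le:dc}, and controlling every correction term by the bootstrap information \eqref{eq:BS}, \eqref{eq:en}, \eqref{eq:pC}, \eqref{eq:MM} together with the remainder estimates \eqref{eq:Rr} of Lemma \ref{le:43}.

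First, I would write $\partial_t\cH_j$ for $j=1,2,3$ by substituting $\partial_t\ep=\eta+\bM_\bW+\bR_\bW$ and $\partial_t\eta=\Delta\ep+f(\bW+\ep)-f(\bW)+\bM_\bX+\bR_\bX$, and integrate by parts to expose the quadratic form in $(\nabla\ep,\eta)$. Organizing the resulting expression according to the three regions $\Omega_{\rm out}$, $\Omega_{\rm inter}$ and $\cup_k B_k$, the leading-order terms reproduce exactly the heuristic computation: outside the wave cone, completion of the square using Lemma \ref{le:sk} plus the identity $\DV(x/|x|)=4/|x|$ yields the coercive inequality $\partial_t\cH_{\rm out}+\frac{2}{t}\cH_{\rm out}\geq 0$; in the intermediate region the terms cancel exactly thanks to the integration by parts of $\int t\,\eta\Delta\ep$; and in each soliton region $B_k$, the linearized structure together with the identity $\partial_t W_k=-\bl_k\cdot\nabla W_k+O(\dot\lambda_k,\dot\by_k)$ produces $\frac{1}{t}\cH_{B_k}$ at the main order. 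Summing the three pieces and using $\rho\leq t/L^2$ on $\{|x|<Lt\}$ to convert the coefficient $\frac1t$ into $\frac{2}{L^2 t}$ gives the main inequality modulo errors.

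Next I would estimate the corrections in four groups. (i) \emph{Cut-off errors}: the weight functions $\rho,\Theta,\bC,\Phi$ are only approximately the ideal $x, \mathbf 1, x, \mathbf 1_{\Omega}$; using \eqref{eq:pk}--\eqref{eq:cl} each derivative of a cut-off produces a factor that is either $O(\alpha)$ or $O(t^{-\alpha})$, and coupled with $\cN^2\leq t^{-5+4\delta}$ via \eqref{eq:e1} these contribute $O(\alpha+t^{-\alpha})\cdot t^{-1}\cN^2$, which is $o(\cH/t)$ and can be absorbed for $\alpha$ small and $t$ large. (ii) \emph{Modulation errors}: the terms generated by $\bM_\bW,\bM_\bX$ are controlled by \eqref{eq:Mm}--\eqref{eq:MM} and Cauchy--Schwarz against $(\nabla\ep,\eta)$, giving $O(\cN^2+t^{-7/2}\cN)\lesssim t^{-6+4\delta}$ using \eqref{eq:BS}. (iii) \emph{Remainder errors}: the cross terms involving $\bR_\bW,\bR_\bX$ are estimated by \eqref{eq:Rr} and Cauchy--Schwarz: $|\int\rho\,\eta\,\bR_\bX|\lesssim \|\eta\|_\rho\|\bR_\bX\|_\rho\lesssim t^{-5/2+2\delta}(t^{-7/2+\delta}+t^{-3/2}\cD)\lesssim t^{-6+3\delta}$ using $\cD\lesssim t^{-2}$, and similarly for $\nabla\bR_\bW$. (iv) \emph{Nonlinear errors}: the terms in $\ep^2\cdot$(soliton) that remain after extracting the quadratic Hessian $\frac12 f'(\bW)\ep^2$ are estimated by Hölder and Sobolev exactly as in the bound for $h_6$ and $h_7$ in Lemma \ref{le:co}, producing $O(t^{-1/2}\cN^2)\lesssim t^{-11/2+4\delta}$, which is again $o(t^{-6+3\delta})$.

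The main obstacle, as in many multi-soliton arguments in the critical setting, is step (i): the leading coercive cancellation only holds for the idealized weights, and each differentiation of $\rho$, $\Theta$, $\bC$ or $\Phi$ generates a term that must be shown to be either of order $\alpha$ times the good coercive quantity (so that it can be absorbed by taking $\alpha$ small) or to enjoy enough decay in $t$ to be swallowed by $t^{-6+3\delta}$. Keeping the bookkeeping coherent requires using both the Hardy-type inequalities in \eqref{eq:e1} and the fine pointwise identities \eqref{eq:pk}--\eqref{eq:GG}, notably that $|\partial_t\bC-\sum_k\bl_k q_k^\alpha|\lesssim \alpha t^{-1}\rho$ (so that the time derivative of $\cH_2$ matches the space derivatives of $\cH_1$) and that $\bC(t,\bl_kt+\by_k^\infty)=\bl_k t$ (so that inside each $B_k$ the functional $\cH_2$ reduces to the Lorentz-boosted energy adapted to $W_k$). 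Once these cancellations are isolated, putting everything together yields $\cH'+\frac{2}{L^2t}\cH\gtrsim_\delta -t^{-6+3\delta}$ as claimed.
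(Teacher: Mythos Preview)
Your overall strategy matches the paper's, and your treatment of the remainder errors (iii) and the higher-order nonlinear errors (iv) is correct. However, there is a genuine gap in your handling of the modulation errors (ii).

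You claim that the terms generated by $\bM_\bW,\bM_\bX$ are controlled by \eqref{eq:MM} and Cauchy--Schwarz, giving $O(\cN^2+t^{-7/2}\cN)\lesssim t^{-6+4\delta}$. But $\cN^2\lesssim t^{-5+4\delta}$ under \eqref{eq:BS}, which is \emph{not} bounded by $t^{-6+3\delta}$; your arithmetic is off by a full power of $t$. Nor can such a term be absorbed into the coercive piece $\frac{\mu_1}{t}\cN^2$, since it carries no small prefactor. In the paper's proof these terms form the group $g_3$, and the crude Cauchy--Schwarz bound you propose is explicitly insufficient: one must exploit two algebraic cancellations. First, the specific combination $\rho\bM_\bX+\bC\cdot\nabla\bM_\bW$ (which arises naturally after integration by parts) equals $\sum_k(\bC-\bl_k t)\cdot\nabla M_k$ up to lower order, and $|\bC-\bl_k t|\lesssim \alpha^{1/10}q_k^{-6/5}$ on $B_k$ by \eqref{eq:cl}; this produces the missing factor $\alpha^{1/10}$ needed for absorption. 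Second, the term $\int\ep\rho(\Delta\bM_\bW+f'(\bW)\bM_\bW)-\int\ep(\bC\cdot\nabla\bM_\bX)$ requires the spectral identity $\Delta M_k+f'(W_k)M_k=(\bl_k\cdot\nabla)^2 M_k$ to reduce again to a $(\bC-\bl_k t)$ structure. Similarly, the terms carrying an explicit factor $\Phi$ exploit $\Phi\lesssim(1-q_k^\alpha)\lesssim \alpha^{1/10}q_k^{-1/5}$ near soliton $k$. You correctly identified this ``$\alpha$ versus $t$-decay'' dichotomy for the cut-off errors in your group (i), but the modulation terms need exactly the same mechanism, applied through the smallness of $\Phi$ and $\bC-\bl_k t$ near the solitons rather than through derivatives of the weights. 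Without these cancellations the estimate fails.
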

\begin{proof}
We compute the time derivatives of $\cH_1$, $\cH_2$ and $\cH_3$ using the systems
\begin{equation}\label{eq:sW}
\left\{\begin{aligned}
\partial_t\bW & = \bX - \bM_{\bW} - \bR_{\bW}\\
\partial_t	\bX & 
	= \Delta \bW +|\bW|^{\frac 43} \bW - \bM_{\bX}- \bR_{\bX}
\end{aligned}\right.
\end{equation} 
and
\begin{equation}\label{eq:EE}
\left\{\begin{aligned}
\partial_t \ep & = \eta + \bM_{\bW}+\bR_{\bW}\\
\partial_t \eta & 
= \Delta \ep +f(\bW + \ep)- f(\bW) + \bM_{\bX}+ \bR_{\bX} .
\end{aligned}\right.
\end{equation} 

First,
\begin{align*}
\frac d{dt} \cH_1 & = \int (\partial_t\rho) \left(|\nabla\ep|^2+\eta^2- 2 (F (\bW +\ep )-F (\bW )-f (\bW )\ep ) \right) \\
&\quad +2 \int \rho (\nabla\partial_t\ep\cdot\nabla\ep) +(\partial_t \eta)\eta- (\partial_t \ep) \left(f (\bW +\ep )-f (\bW )\right) \\
&\quad -2 \int \rho(\partial_t \bW) \left(f(\bW +\ep )-f(\bW )-f'(\bW )\ep \right).
\end{align*}
Thus, using the expression of $\rho$ and integrating by parts, 
\begin{align*}
\frac{d}{dt} \cH_1 & = 
\int_{|x|< Lt} \left(|\nabla\ep|^2+\eta^2- 2 (F (\bW +\ep )-F (\bW )-f (\bW )\ep ) \right)\\
&\quad +\frac{\alpha}t \int_{|x|>Lt} \rho\left(|\nabla\ep|^2+\eta^2- 2 (F (\bW +\ep )-F (\bW )-f (\bW )\ep ) \right)\\
&\quad -2 (1-\alpha) \int_{|x|> Lt} \frac{\rho}{|x|^2} (x\cdot \nabla \ep )(\partial_t \ep) \\ 
&\quad +2 \int \rho (\partial_t \ep) \left( -\Delta \ep - \left( f(\bW + \ep) - f(\bW) \right)\right) +2 \int \rho(\partial_t \eta) \eta\\
& \quad -2 \int \rho (\partial_t \bW) \left( f(\bW + \ep) - f(\bW)-f'(\bW) \ep \right). \end{align*}
Using~\eqref{eq:sW} and~\eqref{eq:EE}
\begin{align*}
\frac{d}{dt} \cH_1 & =
\int_{|x|< Lt} \left(|\nabla\ep|^2+\eta^2- 2 (F (\bW +\ep )-F (\bW )-f (\bW )\ep ) \right)\\
&\quad +\frac{\alpha}t \int_{|x|>Lt} \rho\left(|\nabla\ep|^2+\eta^2- 2 (F (\bW +\ep )-F (\bW )-f (\bW )\ep ) \right)\\
&\quad - 2(1-\alpha) \int_{|x|> Lt} \frac{\rho}{|x|^2} (x\cdot \nabla \ep )\eta
- 2(1-\alpha) \int_{|x|> Lt} \frac{\rho}{|x|^2} (x\cdot \nabla \ep ) (\bM_\bW +\bR_\bW)\\
&\quad + 2 \int \rho \left(-\Delta \ep -f'(\bW)\ep\right)(\bM_\bW +{\bR}_{\bW})
+ 2 \int \rho\eta(\bM_\bX+\bR_\bX) \\& \quad -2 \int \rho\bX \left( f(\bW + \ep) - f(\bW)-f'(\bW) \ep\right).
\end{align*}

Second,
\begin{align*}
\frac d{dt} \cH_2 & = 2 \int (\bC \cdot \nabla \partial_t \ep) \eta
+ 2 \int (\bC \cdot \nabla \ep) \partial_t \eta 
+ 2 \int ( \partial_t \bC \cdot \nabla \ep ) \eta\\
& = 2 \int (\bC \cdot \nabla \eta) \eta 
+ 2 \int (\bC \cdot \nabla \ep) \left( \Delta \ep + \left( f(\bW + \ep) - f(\bW) \right) \right)+ 2 \int ( \partial_t \bC \cdot \nabla \ep ) \eta
 \\& \quad+2 \int (\bC \cdot \nabla \bM_{\bW}) \eta+2 \int (\bC \cdot \nabla {\bR}_{\bW}) \eta
 + 2 \int (\bC \cdot \nabla \ep) \bM_{\bX}
 + 2 \int (\bC \cdot \nabla\ep) {\bR}_{\bX}.
\end{align*}
Let us denote $J_{\bC}=(\partial_i\bC_j)_{j,i}$ the Jacobian matrix of $\bC$.
Then, by \eqref{eq:GG}, one has
\begin{equation}\label{eq:G2}
J_{\bC}=\Phi \,\ID + G \quad \mbox{where
$G=(G_{j,i})_{j,i}$ satisfies $|G|\lesssim \alpha\Theta$.}
\end{equation}
From the notation \eqref{eq:G2}, $\DV\bC = 5\Phi + \tr G$, one gets
\[
2 \int (\bC \cdot \nabla \eta) \eta
=-\int \eta^2 \DV \bC = - 5 \int \eta^2 \Phi -\int \eta^2 \tr G.
\]
Moreover, by integration by parts and using \eqref{eq:G2},
\begin{align*}
2 \int (\bC \cdot \nabla \ep) \Delta \ep 
& = 
\int \DV \bC \, |\nabla\ep|^2-2\int  (\nabla\ep)^\TR  J_{\bC} \nabla\ep \\
& = 3 \int |\nabla \ep|^2 \Phi 
+ \int  |\nabla\ep|^2\tr G -2\int (\nabla\ep)^\TR G\, \nabla\ep.
\end{align*}
Lastly, using again $\DV\bC = 5\Phi + \tr G$,
\begin{align*}
2 \int (\bC \cdot \nabla \ep) \left( f(\bW + \ep) - f(\bW) \right) 
& = 
- 2 \int (\bC \cdot \nabla \bW) \left( f(\bW + \ep) - f(\bW) -f'(\bW)\ep\right) \\
&\quad + 2 \int ( \bC \cdot \nabla ) \left( F(\bW + \ep) - F(\bW)-f(\bW) \ep \right)\\
& = 
- 2 \int (\bC \cdot \nabla \bW) \left( f(\bW + \ep) - f(\bW) -f'(\bW)\ep\right) \\
&\quad -10 \int \left( F(\bW + \ep) - F(\bW)-f(\bW) \ep \right) \Phi\\
&\quad -2 \int \left( F(\bW + \ep) - F(\bW)-f(\bW) \ep \right) \tr G.
\end{align*}
Therefore,
\begin{align*}
\frac d{dt} \cH_2 
& = - 5 \int \eta^2 \Phi+3 \int |\nabla \ep|^2 \Phi 
-10 \int \left( F(\bW + \ep) - F(\bW)-f(\bW) \ep \right) \Phi\\
&\quad - 2 \int (\bC \cdot \nabla \bW) \left( f(\bW + \ep) - f(\bW) -f'(\bW)\ep\right) + 2 \int ( \partial_t \bC \cdot \nabla \ep ) \eta\\
& \quad
+\int |\nabla \ep|^2 \tr G - 2 \int (\nabla\ep)^\TR G \,\nabla \ep
-\int \eta^2 \tr G
\\& \quad+2 \int (\bC \cdot \nabla \bM_{\bW}) \eta+2 \int (\bC \cdot \nabla {\bR}_{\bW}) \eta
+ 2 \int (\bC \cdot \nabla \ep) \bM_{\bX}
 + 2 \int (\bC \cdot \nabla\ep) {\bR}_{\bX}\\
&\quad -2 \int \left( F(\bW + \ep) - F(\bW)-f(\bW) \ep \right) \tr G.
\end{align*}

Third,
\begin{align*}
 \frac d{dt} \cH_3
& = 4 \int \Phi (\partial_t\ep) \eta+4 \int \Phi \ep (\partial_t\eta)
+4 \int (\partial_t\Phi) \ep \eta\\
& = 4 \int \eta^2 \Phi+4 \int \Phi \ep (\Delta\ep)
+ 4 \int (f(\bW+\ep)-f(\bW))\ep \Phi \\
& \quad + 4 \int \bM_{\bW} \eta \Phi + 4 \int \bM_{\bX} \ep \Phi
+ 4 \int {\bR}_{\bW} \eta \Phi + 4 \int {\bR}_{\bX} \ep \Phi+4 \int (\partial_t\Phi) \ep \eta\\
& = 4 \int \eta^2 \Phi- 4 \int |\nabla \ep|^2\Phi
-4 \int \ep (\nabla \ep\cdot \nabla\Phi)
+ 4 \int (f(\bW+\ep)-f(\bW))\ep \Phi \\
& \quad + 4 \int \bM_{\bW} \eta \Phi + 4 \int \bM_{\bX} \ep \Phi
+ 4 \int {\bR}_{\bW} \eta \Phi + 4 \int {\bR}_{\bX} \ep \Phi
+4 \int (\partial_t\Phi) \ep \eta.
\end{align*}
The above computations imply that
\[
\frac d{dt} \cH 
 = \frac d{dt} \cH_1+ \frac d{dt} \cH_2 + \frac d{dt} \cH_3
 =g_1+g_2+g_3+g_4+g_5
\]
where we gather the various terms as described below.
\begin{itemize}
\item The first term contains the main terms
\begin{align*}
g_1 & = - \int_{|x|> Lt} \Phi(|\nabla \ep|^2+\eta^2) 
- 2 (1-\alpha) \int_{|x|> Lt} \frac{\rho}{|x|^2} (x\cdot \nabla \ep )\eta \\
&\quad +
\int_{|x|< Lt} \bigg( \left(|\nabla\ep|^2  +\eta^2 \right) \sum_k \qk
- 2 (F (\bW +\ep )-F (\bW )-f (\bW )\ep ) \bigg)\\
&\quad + 2 \int\sum_k \qk (\bl_k \cdot \nabla \ep ) \eta
\end{align*}
which we will estimate using $\cH$.

\item The second term, defined by
\[
g_2= -2 \int (\rho\bX+(\bC \cdot \nabla \bW)) \left( f(\bW + \ep) - f(\bW)-f'(\bW) \ep\right)
\]
will be estimated using the fact that 
the function $\rho\bX+(\bC \cdot \nabla \bW)$ is small since
$\bC\sim \bl_k\rho $ close to a soliton $W_k$ and 
$X_k=-\bl_k \nabla W_k$.
\item Terms containing the error terms $\bM_\bW$ and $\bM_\bX$ are
gathered in $g_3$
\begin{align*}
g_3 & = - 2(1-\alpha) \int_{|x|> Lt} \frac{\rho}{|x|^2} (x\cdot \nabla \ep ) \bM_\bW 
+ 2 \int \rho \left(-\Delta \ep -f'(\bW)\ep\right)\bM_\bW \\
&\quad +2 \int (\bC \cdot \nabla \ep) \bM_{\bX}
+ 4 \int \ep \bM_{\bX} \Phi
 + 2 \int \eta \left( \rho \bM_\bX + \bC\cdot \nabla \bM_\bW
+ 2\Phi \bM_\bW \right).
\end{align*}
\item Terms containing the error terms $\bR_\bW$ and $\bR_\bX$ are
gathered in $g_3$
\begin{align*}
g_4 & = -2 (1-\alpha) \int_{|x|> Lt} \frac{\rho}{|x|^2} (x\cdot \nabla \ep )\bR_\bW 
+ 2 \int \rho \left(-\Delta \ep -f'(\bW)\ep\right)\bR_\bW \\
&\quad +2 \int (\bC \cdot \nabla \ep) \bR_{\bX}
+ 4 \int \ep \bR _{\bX}  \Phi+ 2 \int \eta \left(\rho \bR_\bX + \bC\cdot \nabla \bR_\bW
+ 2 \Phi \bR_\bW \right) .
\end{align*}
\item The last term gathers all other error terms
\begingroup
\allowdisplaybreaks
\begin{align*}
g_5 & = 
 -4 \int \ep (\nabla \ep\cdot \nabla\Phi)
+2 \int \Bigl(\Bigl(\partial_t \bC - \sum_k \bl_k \qk\Bigr)\cdot \nabla \ep\Bigr) \eta+4 \int (\partial_t\Phi) \ep \eta\\
& \quad
+\int_{|x|<Lt} \Bigl(1-\Phi-\sum_k \qk\Bigr)\left(|\nabla\ep|^2+\eta^2\right)\\
&\quad 
+\int |\nabla \ep|^2 \tr G - 2 \int (\nabla\ep)^\TR G \,\nabla \ep
-\int \eta^2 \tr G\\
&\quad 
-10\int (F(\bW+\ep)-F(\bW)-f(\bW) \ep) \Phi
+ 4 \int (f(\bW+\ep)-f(\bW)) \ep \Phi\\
&\quad -2 \int \left( F(\bW + \ep) - F(\bW)-f(\bW) \ep \right) \tr G\\
&\quad
+\frac{\alpha}t \int_{|x|>Lt} \rho\left(|\nabla\ep|^2+\eta^2- 2 (F (\bW +\ep )-F (\bW )-f (\bW )\ep ) \right) .
\end{align*}
\endgroup
\end{itemize}

\medskip

\emph{Estimate of $ g_1$.}
We claim that for some constants  $ \mu_1>0$, $C>0$,
\begin{equation}\label{eq:g1}
g_1+\frac{2}{L^2 t } \cH \geq \frac{\mu_1} t \int \rho (|\nabla\ep|^2+\eta^2)
- C\sum_k \left((z_k^-)^2+ (z_k^+)^2\right)
\end{equation}
We rearrange $g_1$ as follows
\begin{align*}
g_1 & = - \int_{|x|> Lt} \Phi(|\nabla \ep|^2+\eta^2) 
- 2(1-\alpha) \int_{|x|> Lt} \frac{\rho}{|x|^2} (x\cdot \nabla \ep )\eta\\
&\quad +
\sum_k \int_{|x|< Lt} \left(\qk \left(|\nabla\ep|^2 +\eta^2 
+ 2 (\bl_k \cdot \nabla \ep ) \eta\right) - f'(W_k )\ep^2\right)\\
&\quad - 2\int_{|x|< Lt} (F (\bW +\ep )-F (\bW )-f (\bW )\ep )
+\sum_k \int f'(W_k) \ep^2.
\end{align*}
We define
\[
\tilde g_1 = -\int_{|x|> Lt} \Phi(|\nabla \ep|^2+\eta^2) 
-\frac 2L(1-\alpha)\int_{|x|> Lt} \Phi \biggl(\frac{x}{|x|}\cdot \nabla \ep\biggr)\eta 
+ \frac 2{L^2 t} {h_2} ,
\]
where $h_2$ was defined in the proof of Lemma \ref{le:co}.
Recall that
\begin{align*}
h_2 & = \frac\sigma L\int \Phi |x| (|\nabla\ep|^2+\eta^2) 
+ \int \Phi |x| (|\nabla\ep|^2+\eta^2)
+2 \int \Phi(x\cdot \nabla \ep) \eta +4 \int \Phi \ep \eta.
\end{align*}
Thus, using $(1-\alpha)/L\geq 1$ for $0<\alpha\leq \sigma$,
\begin{align*}
\tilde g_1 & \geq
-\frac {1-\alpha}L\int_{|x|> Lt} \Phi(|\nabla \ep|^2+\eta^2) 
-\frac 2L(1-\alpha)\int_{|x|> Lt} \Phi \biggl(\frac{x}{|x|}\cdot \nabla \ep\biggr)\eta \\
&\quad + \frac{2}{L^2t} \biggl(\int \Phi |x| (|\nabla\ep|^2+\eta^2)
+2 \int \Phi(x\cdot \nabla \ep) \eta +4 \int \Phi \ep \eta\biggr)\\
&\quad + \frac{2\sigma}{L^3t}\int \Phi |x| (|\nabla\ep|^2+\eta^2).
\end{align*}
Forming squares, we obtain
\begin{align*}
\tilde g_1 & \geq -\frac {1-\alpha}L \int_{|x|> Lt} \Phi \biggl(\frac{x}{|x|}\cdot \nabla \ep+\eta\biggr)^2 
-\frac {1-\alpha}L\int_{|x|> Lt} \Phi\biggl( |\nabla\ep|^2-\frac{|x\cdot\nabla\ep|^2}{|x|^2}\biggr)\\
&\quad + \frac 2{L^2t} \int \Phi|x| \biggl(\frac{x}{|x|}\cdot \nabla \ep+\eta\biggr)^2 
+\frac 2{L^2t}\int \Phi |x|\biggl( |\nabla\ep|^2-\frac{|x\cdot\nabla\ep|^2}{|x|^2}\biggr) +\frac{8}{L^2t}\int \Phi \ep \eta\\
&\quad + \frac{2\sigma}{L^3t}\int \Phi |x| (|\nabla\ep|^2+\eta^2).
\end{align*}
Thus,
\[
\tilde g_1 \geq 
\frac{2\sigma}{L^3t}\int \Phi |x| (|\nabla\ep|^2+\eta^2)+\frac 1{L^2t} \int \Phi|x| \biggl(\frac{x}{|x|}\cdot \nabla \ep+\eta\biggr)^2 +\frac{8}{L^2t}\int \Phi \ep \eta.
\]
Using Lemma \ref{le:sk}, we obtain
\begin{align*}
\tilde g_1 
&\geq 
\frac{2\sigma}{L^3t}\int \Phi |x| (|\nabla\ep|^2+\eta^2) +
\frac 1{L^2t}
\int \Phi |x| \biggl(\frac{x}{|x|}\cdot\nabla\ep+\eta+4 \frac{\ep}{|x|}\biggr)^2 
\\
&\quad+\frac 4{L^2t}\int\biggl( \nabla \Phi\cdot \frac{x}{|x|}\biggr) \ep^2
\end{align*}
By \eqref{eq:pk} and then \eqref{eq:e1},
\[
\biggl|\frac 1t\int\biggl( \nabla \Phi\cdot \frac{x}{|x|}\biggr) \ep^2\biggr|
\lesssim \frac{\alpha}{t} \int \Theta q\ep^2
\lesssim \frac{\alpha}{t} \int \rho |\nabla\ep|^2.
\]
We have proved 
\begin{equation}\label{eq:gt}
\tilde g_1 \gtrsim \frac{2\sigma}{L^3t}\int \Phi |x| (|\nabla\ep|^2+\eta^2)- \frac\alpha{t}\int \rho |\nabla\ep|^2.
\end{equation}
Now, we decompose, using the notation of the proof of Lemma \ref{le:co},
\begin{align*}
g_1+\frac{2}{L^2t}\cH  & = \tilde g_1- 2(1-\alpha) \int_{|x|> Lt}\frac1{|x|} \Bigl(\frac{\rho}{|x|}- \frac \Phi L\Bigr) (x\cdot \nabla \ep )\eta\\
&\quad 
+\sum_k \int_{|x|< Lt} \left(\qk \left(|\nabla\ep|^2 +\eta^2 
+ 2 (\bl_k \cdot \nabla \ep ) \eta\right) - f'(W_k )\ep^2\right)\\
&\quad - 2\int_{|x|< Lt} (F (\bW +\ep )-F (\bW )-f (\bW )\ep )
+\sum_k \int f'(W_k) \ep^2\\
&\quad +\frac{2}{L^2t} (h_1+h_3+h_4+h_5+h_6+h_7).
\end{align*}
For the second term on the right-hand side, we use that for $|x|>Lt$,
$L\rho =|x| \Theta=|x|\Phi/\psi $ and 
$|1-\psi|\lesssim t^{-\alpha}$ so that
\[
\biggl|\int_{|x|> Lt}\frac1{|x|} \Bigl(\frac{\rho}{|x|}- \frac \Phi L\Bigr) (x\cdot \nabla \ep )\eta\biggr|
\lesssim t^{-1-\alpha} \int \rho \left(|\nabla\ep|^2+\eta^2\right).
\]
For the third term, we observe that
\[
\int_{|x|< Lt} \left(\qk \left(|\nabla\ep|^2 +\eta^2 
+ 2 (\bl_k \cdot \nabla \ep ) \eta\right) - f'(W_k )\ep^2\right)
\geq \tilde h_{1,k} - C t^{-1-\alpha} \int \rho \left(|\nabla\ep|^2+\eta^2\right)
\]
where
$\tilde h_{1,k}$, defined in the proof of Lemma \ref{le:co}, satisfies
\[
\tilde h_{1,k} \geq {\mu_0} \int (|\nabla \ep|^2 + \eta^2) \qk
-\frac 1{\mu_0} \sum_k \left((z_k^-)^2+ (z_k^+)^2\right).
\]
For the next terms, we use the estimates \eqref{eq:h1}--\eqref{eq:h7} of the proof of Lemma \ref{le:co}, where \eqref{eq:gt} now replaces \eqref{eq:h2}.
We deduce that there exists a constant $ \mu_1>0$ such that
\[
g_1+\frac{2}{L^2t}\cH\geq
\frac{2\mu_1} t \int \rho (|\nabla\ep|^2+\eta^2)
- C\sum_k \left((z_k^-)^2+ (z_k^+)^2\right)
- \frac{1}{t}\left(\alpha+t^{-\alpha}\right) \int \rho \left(|\nabla\ep|^2+\eta^2\right).
\]
This implies \eqref{eq:g1} for $\alpha>0$ sufficiently small and $t$ sufficiently large (depending on $\alpha$).

\medskip

\emph{Estimate of $g_2$.}
We claim
\begin{equation}\label{eq:g2}
|g_2|\lesssim \frac {\alpha^\frac12} t \int \rho |\nabla \ep|^2.
\end{equation}
On the one hand, by the definition of $\bW$, $\bX$, and \eqref{eq:Av}, we have
\[|\bX+\sum_k \bl_k\cdot \nabla W_k|
+|\nabla\bW - \sum_k \nabla W_k|
\lesssim \sum_k |z_k| + \sum_k |\nabla v_k|
\lesssim_\delta t^{-2} q^{-3+\delta}\]
and by \eqref{eq:cl},
$q_k^3 |\bC -\bl_k t | \lesssim \alpha^\frac 12 q_k$.
Thus, for $t$ sufficiently large,
\begin{align*}
|\rho\bX+(\bC \cdot \nabla \bW)|&\lesssim |\rho -t| |\bX| + t |\bX+\sum_k \bl_k\cdot \nabla W_k|
\\&\quad + | \sum_k (\bC-\bl_k t)\cdot \nabla W_k|
+|\bC (\sum_k \nabla W_k-\nabla \bW)|\\
& \lesssim_\delta |\rho -t| q^{3-\delta} 
+t^{-1} q^{3-\delta} + \alpha^\frac12 q^2
+\rho t^{-2}q^{3-\delta} \lesssim \alpha^\frac12 q^\frac32.
\end{align*}
On the other hand,
\[
\left| f(\bW + \ep) - f(\bW)-f'(\bW) \ep\right|
\lesssim |\bW| |\ep|^2 +|\ep|^\frac73
\lesssim q |\ep|^2+|\ep|^\frac73.
\]
Thus, using also \eqref{eq:e1}, the Hardy inequality, the Sobolev
inequality and \eqref{eq:BS},
\begin{align*}
|g_2| &\lesssim \alpha^\frac12 \int q^2 |\ep|^2 
+ \alpha^\frac12 \int q^\frac32 |\ep|^\frac 73\\
&\lesssim \alpha^\frac12 \int q^2\ep^2 + 
\alpha^\frac12 \left(\int q^2\ep^2\right)^\frac34\left(\int |\ep|^\frac{10}{3}\right)^\frac14 
 \lesssim \alpha^\frac12\int |\nabla \ep|^2 \lesssim \frac {\alpha^\frac12} t \int \rho |\nabla \ep|^2.
\end{align*}

\medskip

\emph{Estimate of $g_3$.}
We claim
\begin{equation}\label{eq:g3}
|g_3|
\lesssim \frac{\alpha^\frac1{10}}t \int \rho(|\nabla \ep|^2 + \eta^2) + t^{-6}.
\end{equation}
Integrating by parts the term containing $\Delta\ep$
and using $\nabla\rho=(1-\alpha) x|x|^{-2}\rho$ for $|x|>Lt$, there is a cancellation which leads to the
following expression for $g_3$
\begin{align*}
g_3& = 2 \int \rho\nabla\ep\cdot\nabla \bM_\bW
-2 \int \ep f'(\bW) \rho\bM_\bW +2 \int (\bC \cdot \nabla \ep) \bM_{\bX}
+ 4 \int \bM _{\bX} \ep \Phi\\
&\quad + 2 \int \eta \left(\rho \bM_\bX + \bC\cdot \nabla \bM_\bW
+ 2 \Phi \bM_\bW \right).
\end{align*}
Integrating by parts again, we obtain
\begin{align*}
g_3 & = - 2(1-\alpha) \int_{|x|>Lt} \ep\rho | x|^{-2}(x\cdot \nabla\bM_\bW)
-2 \int \ep \left( (\DV \bC)  -2\Phi \right)\bM_\bX 
+ 4 \int \eta \Phi \bM_\bW \\
&\quad 
+ 2 \int \eta \left( \rho \bM_\bX + \bC\cdot \nabla \bM_\bW\right)
-2 \int \ep \left(\rho \Delta \bM_\bW+f'(\bW) \rho\bM_\bW -\bC\cdot\nabla \bM_\bX \right)\\
&\quad 
=g_{3,1}+g_{3,2}+g_{3,3}+g_{3,4}+g_{3,5}.
\end{align*}
First, by \eqref{eq:Mm}, then the Cauchy inequality, the Hardy inequality
and then \eqref{eq:en},
\begin{align*}
|g_{3,1}|\lesssim t^{-3+2\delta} \int_{|x|>Lt} q^4 |\ep|
\lesssim t^{-3+2\delta}t^{-\frac12}\left(\int |x|^{-2}|\ep|^2\right)^\frac12
\lesssim t^{-\frac{13}2+4\delta}\lesssim t^{-6}.
\end{align*}
For the terms $g_{3,2}$ and $g_{3,3}$, we need to use
the (weak) localizing effect of $\Phi$ outside the solitons.
Recall that by \eqref{eq:G2}, we have
\[
(\DV \bC) -2\Phi= 3 \Phi + \tr G \quad\mbox{where}\quad |G|\lesssim \alpha.
\]
Moreover, by the simple inequality
$1-(1+a^2)^{-\alpha/2} \lesssim \alpha^\frac1{10} a^\frac15$, we have
\[
(1-\qk)\lesssim \alpha^\frac 1{10} q_k^{-\frac 15}.
\]
Using \eqref{eq:pC} and $0\leq \Phi\leq\psi$, this implies 
\[
\Phi |\bM_\bX|\lesssim \alpha^\frac 1{10} (t^{-\frac12}\cN+t^{-4}) q^{4-\frac15},
\quad
\Phi |\bM_\bW|\lesssim \alpha^\frac 1{10} (t^{-\frac12}\cN+t^{-4}) q^{3-\frac15}.
\]
Thus, using also \eqref{eq:e1}
and \eqref{eq:en} we deduce that
\begin{align*}
|g_{3,2}|&\lesssim 
\alpha^\frac 1{10} (t^{-\frac12}\cN+t^{-4}) \int |\ep|q^{4-\frac15}\\
&\lesssim \alpha^\frac 1{10} (t^{-\frac12}\cN+t^{-4})\left(\int q^2\ep^2\right)^\frac12
\left(\int q^{\frac{28}5}\right)^\frac 12 \\
&\lesssim \alpha^\frac 1{10}  t^{-1}\cN^2 +t^{-6}.
\end{align*}
Similarly,
\[
|g_{3,3}|\lesssim 
\alpha^\frac 1{10} (t^{-\frac12}\cN+t^{-4}) \int |\eta|q^{3-\frac15}
\lesssim \alpha^\frac 1{10} (t^{-\frac12}\cN+t^{-4})\|\eta\|_{L^2}
\lesssim \alpha^\frac 1{10} t^{-1} \cN^2 +t^{-6}.
\]
To estimate $g_{3,4}$ and $g_{3,5}$, we need to exploit cancellations. 
Set 
\[
M_k = \Biggl(\frac {\dot \lambda_k}{\lambda_k} - \frac{a_k}{\lambda_k^{\frac 12}t^2}\Biggr)\Lambda W_k + \dot{\by}_k\cdot \nabla W_k \]
so that
$\bM_{\bW} = \sum_k M_k$ and $\bM_{\bX} = - \sum_k \bl_k \cdot \nabla M_k$.
In particular,
\[
|t\bM_\bX+\bC\cdot\nabla\bM_\bW|
=\biggl|\sum_k (\bC- \bl_kt)\cdot\nabla M_k\biggr|.
\]
From \eqref{eq:cl}, one has
$|\bC-\bl_k t|\lesssim \alpha^\frac1{10} q_k^{-\frac 65}$ on $B_k$, and so
\begin{equation}\label{eq:cm}
\int (\bC-\bl_k t)^2 q_k^8
\lesssim \int_{B_k} + \int_{\RR^5\setminus B_k}
\lesssim \alpha^\frac15 \int q_k^\frac{28}5 + \int_{\RR^5\setminus B_k} q_k^6
\lesssim \alpha^\frac15 + t^{-1}\lesssim \alpha^\frac1{5}
\end{equation}
for $t$ sufficiently large.
Thus, by \eqref{eq:pC} and then \eqref{eq:en},
\begin{align*}
\biggl|\int \eta (t \bM_\bX+\bC\cdot\nabla\bM_\bW)\biggr|
&\lesssim (t^{-\frac12}\cN+t^{-4})
\sum_k \int |\eta| |\bC-\bl_kt| q_k^4\\
&\lesssim (t^{-\frac12}\cN+t^{-4}) \|\eta\|_{L^2} \|(\bC-\bl_k t) q_k^4\|_{L^2}\\
& \lesssim \alpha^\frac1{10} t^{-1}\cN^2 + t^{-6}.
\end{align*}
To complete the estimate of $g_{3,4}$, we only need observe that
by \eqref{eq:Mm} and \eqref{eq:BS},
\[
\biggl| \int\eta (\rho-t)\bM_\bX\biggr|
\lesssim \int_{|x|>Lt} |\eta| \rho |\bM_\bX|
\lesssim \|\eta\|_\rho \left(\int_{|x|>Lt} \rho\bM_\bX^2\right)^\frac 12
\lesssim t^{-\frac{13}2+4\delta}\lesssim t^{-6}.
\]
Lastly, we estimate $g_{3,5}$.
Using the cancellation
\[
\Delta M_k + f'(W_k)M_k - (\bl_k\cdot\nabla)^2 M_k=0,
\]
we see that
\[
g_{3,5}=
-2 \sum_k \int \ep\rho (f'(\bW)-f'(W_k))M_k +
2\sum_k\int \ep( (\bC-\rho\bl_k )\cdot\nabla )(\bl_k\cdot\nabla M_k).
\]
We treat the first term in the above expression of $g_{3,5}$.
For $x\in \Omega=\RR^5\setminus \cup_k B_k$, using \eqref{eq:Av},
we have
\[
|f'(\bW)-f'(W_k)|\lesssim \sum_k |W_k|^\frac43+\sum_k |v_k|^\frac43
\lesssim (t^{-2}q^\frac32)^\frac43\lesssim t^{-3}.
\]
Moreover, for $x\in B_k$, we have
\[
|f'(\bW)-f'(W_k)|\lesssim |W_k|^\frac13 \biggl(\sum_{m\neq k} |W_m|+\sum_m |v_m|\biggr)
\lesssim q_k (t^{-2}q^\frac32)\lesssim t^{-2}.
\]
Thus, using \eqref{eq:pC} 
\[
\biggl|\int \ep\rho (f'(\bW)-f'(W_k))M_k\biggr|
\lesssim t^{-1}\left(t^{-\frac12}\cN+t^{-4}\right)\int |\ep| q^3
\lesssim t^{-6}.
\]
For the second term in the
above expression of $g_{3,5}$, using \eqref{eq:en} and \eqref{eq:pC}, we have
\begin{align*}
\biggl|\int \ep( (\bC-\bl_k t)\cdot\nabla )(\bl_k\cdot\nabla M_k)\biggr|
&\lesssim \left(t^{-\frac12}\cN+t^{-4}\right)\|q_k\ep\| \|(\bC-\bl_k\rho)q_k^4\|\\
&\lesssim  \frac{\alpha^\frac1{10}}t \int \rho(|\nabla \ep|^2 + \eta^2) + t^{-6}.
\end{align*}
Moreover,
\[
 \int_{|x|\geq L t} |\ep| (\rho-t) |\nabla(\bl_k\cdot\nabla M_k| 
\lesssim t^{-3+2\delta} \|q \ep\|_{L^2} \biggl(\int_{|x|\geq Lt} \rho q^8\biggr)^\frac12
\lesssim t^{-7+4\delta}\lesssim t^{-6}.
\]
The proof of the estimate of $g_3$ is complete. 

\medskip

\emph{Estimate of $g_4$.}
We claim
\begin{equation}\label{eq:g4}
|g_4|\lesssim_\delta t^{-6+3\delta}.
\end{equation}
Recall the expression of $g_4$, after one integration by parts,
as for $g_3$, 
\begin{align*}
g_4 & = 2 \int \rho\nabla\ep\cdot\nabla \bR_\bW
-2 \int \ep f'(\bW) \rho\bR_\bW +2 \int (\bC \cdot \nabla \ep) \bR_{\bX}
+ 4 \int \bR _{\bX} \ep \Phi\\
&\quad + 2 \int \eta \left(\rho \bR_\bX + \bC\cdot \nabla \bR_\bW
+ 2 \Phi \bR_\bW \right) .
\end{align*}
Using the Cauchy-Schwarz inequality on each of the terms in $g_4$, 
$|f'(\bW)|\lesssim q^2$,
and the Hardy inequality $\int\rho|x|^{-2}f^2\lesssim \int \rho|\nabla f|^2$ taken from 
\eqref{eq:e1}, we find
\[
|g_4|
\lesssim(\|\nabla\ep\|_\rho+\|\eta\|_\rho)(\|\nabla\bR_\bW\|_\rho+\|\bR_\bX\|_\rho).
\]
Using \eqref{eq:BS}, \eqref{eq:pC} and \eqref{eq:Rr}, we conclude for this term
\[
|g_4|\lesssim_\delta (\|\nabla\ep\|_\rho+\|\eta\|_\rho)\left(t^{-\frac72+\delta}+t^{-\frac32}\cD \right)
\lesssim_\delta  t^{-6+3\delta}.
\]

\medskip

\emph{Estimate of $g_5$.}
We claim
\begin{equation}\label{eq:g5}
|g_5|\lesssim \frac\alpha  t\int \rho (|\nabla \ep|^2+\eta^2).
\end{equation}
To prove the estimate \eqref{eq:h5}, we decompose 
$g_5 = \sum_{n=1}^6 g_{5,n}$ where
\begin{align*}
g_{5,1} & = 
-4 \int \ep (\nabla \ep\cdot \nabla\Phi)\\
g_{5,2} & = 
2 \int \Bigl(\Bigl(\partial_t \bC - \sum_k \bl_k \qk\Bigr)\cdot \nabla \ep\Bigr) \eta\\
g_{5,3}& = 4 \int (\partial_t\Phi) \ep \eta\\
g_{5,4}& = \int_{|x|<Lt} \Bigl(1-\Phi-\sum_k\qk\Bigr)\left(|\nabla\ep|^2+\eta^2\right)\\
g_{5,5} & = 
\int |\nabla\ep|^2\tr G - 2 \int (\nabla\ep)^\TR G\, \nabla\ep
-\int \eta^2 \tr G\\
g_{5,6}& = -10\int (F(\bW+\ep)-F(\bW)-f(\bW) \ep) \Phi
+ 4 \int (f(\bW+\ep)-f(\bW)) \ep \Phi\\
&\quad -2 \int \left( F(\bW + \ep) - F(\bW)-f(\bW) \ep \right) \tr G\\
 & \quad + \frac{\alpha}t \int_{|x|>Lt} \rho\left(|\nabla\ep|^2+\eta^2- 2 (F (\bW +\ep )-F (\bW )-f (\bW )\ep ) \right).
\end{align*}
First, by \eqref{eq:Ph}, $|\nabla \Phi|\lesssim \alpha \Theta q
\lesssim \alpha t^{-1} \rho q$.
Thus, using \eqref{eq:e1}, 
\[
|g_{5,1}|
\lesssim \frac\alpha t \left(\int \rho q^2 \ep^2+\int \rho |\nabla\ep|^2\right)
\lesssim \frac \alpha t \int \rho |\nabla \ep|^2.
\]
Second, by \eqref{eq:GG}, $|\partial_t \bC -\sum_k \bl_k \qk | \lesssim \alpha t^{-1}\rho$
and so
\[
|g_{5,2}|\lesssim \frac\alpha t \int \rho(|\nabla \ep|^2 + \eta^2).
\]
Third, using \eqref{eq:Ph}, we have
\[
|\partial_t \Theta|\lesssim 
\alpha\Theta( t^{-1}+q)
\lesssim \frac{\alpha}t\rho (|x|^{-1} + q).
\]
Thus, by the Cauchy-Schwarz inequality and \eqref{eq:e1},
\[
|g_{5,3}|
\lesssim \frac \alpha t \int \rho\left((|x|^{-2}+q^2) \ep^2 + \eta^2\right)
\lesssim \frac \alpha t \int \rho\left( |\nabla \ep|^2 + \eta^2\right).
\]
Fourth, since $|1-\psi-\sum_k\qk|\lesssim t^{-\alpha}$, it holds
\[
|g_{5,4}|\lesssim t^{-\alpha} \int |\nabla\ep|^2+\eta^2\lesssim 
t^{-1-\alpha}\int \rho\left(|\nabla\ep|^2+\eta^2\right).
\]
Then, by \eqref{eq:G2},
$|G|\lesssim \alpha\Theta\lesssim \alpha\rho/t$, and so
\begin{equation*}
|g_{5,5}|\lesssim\biggl| \int |\nabla\ep|^2\tr G \biggr|+
\biggl| \int (\nabla\ep)^\TR G\, \nabla\ep \biggr|+ 
\left| \int \eta^2 \tr G\right|
\lesssim \frac\alpha  t\int \rho (|\nabla \ep|^2+\eta^2).
\end{equation*}
Lastly, we have
\[
|F(\bW+\ep)-F(\bW)-f(\bW) \ep|
+|(f(\bW+\ep)-f(\bW))\ep| 
\lesssim |\ep|^{\frac{10}3} + q^4\ep^2.
\]
Moreover, 
we have already observed in the proof of \eqref{eq:cl} that
$\psi\leq 1-\qk\lesssim \alpha |x-\bl_k t-\by_k|^2$, for
any $k$. Thus,
$\Phi q^4= \Theta \psi q^4 \lesssim \alpha q^2$.
Thus, using the Sobolev inequality and the Hardy inequality,
and \eqref{eq:en}, for
$t$ sufficiently large,
\begin{align*}
&\left| \int (F(\bW+\ep)-F(\bW)-f(\bW) \ep) \Phi\right|
+ \left|\int (f(\bW+\ep)-f(\bW)) \ep \Phi\right|
\\ &\quad\lesssim \int |\ep|^{\frac{10}3} 
+ \alpha \int \ep^2 q^2
\lesssim \left( \int |\nabla \ep|^2\right)^\frac53+\alpha \int |\nabla \ep|^2
\lesssim \frac\alpha t \int \rho|\nabla \ep|^2.
\end{align*}
By \eqref{eq:GG}, $|G|\lesssim \alpha$, and so
using also $\int |\nabla \ep|^2\lesssim 1$ by \eqref{eq:en},
\[
\left|\int \left( F(\bW + \ep) - F(\bW)-f(\bW) \ep \right) \tr G\right|
\lesssim \alpha \int (|\ep|^{\frac{10}3} + q^4\ep^2)
\lesssim \alpha \int |\nabla \ep|^2
\lesssim \frac\alpha t\int \rho |\nabla\ep|^2.
\]
For the last term of $g_{5,6}$, proceeding as in the proof of 
\eqref{eq:h6}, one has
\begin{align*}
\frac{\alpha}t 
\biggl|\int_{|x|>Lt}\rho\left(F(\bW +\ep)-F(\bW)-f(\bW)\ep)\right)\biggr|
\lesssim \frac\alpha t\int \rho \big(q^4 \ep^2 + |\ep|^{\frac{10}3}\big)
\lesssim  \frac\alpha t\int \rho |\nabla\ep|^2.
\end{align*}

In conclusion of all the above estimates, we have obtained
\[
\cH' +\frac 2 {L^2}\frac \cH t \gtrsim_\delta -t^{-6+3\delta}
-\sum_k \left((z_k^-)^2+ (z_k^+)^2\right)
\]
by taking $\alpha>0$ small enough,
 $0<\delta\leq \alpha/4$ and $T>0$ sufficiently large depending on $\alpha$ and
$\delta$.
Now, $\alpha$ is fixed and $\delta=\alpha/4$ is fixed and $T>0$
is fixed.
Inserting the estimate
$(z_k^-)^2+ (z_k^+)^2 \lesssim t^{-7}$ from \eqref{eq:BS},
we obtain the result.
\end{proof}

\section{Existence}
To complete the proof of Theorem \ref{th:un} concerning the existence of multi-solitons,
we closely follow \cite[section 5]{MaM2}.
We will prove the following proposition, which is identical to \cite[Proposition 5.1]{MaM2},
but without restriction on the parameters.
The functions $(\bW,\bX)$ are defined in section \ref{S:2.2}.

\begin{proposition}\label{pr:S1}
Let $0<\delta\ll 1$.
There exist $T_0>0$
and a solution $u(t)$ of~\eqref{eq:NW} on $[T_0,+\infty)$ satisfying, for all $t\in [T_0,+\infty)$,
\begin{equation}\label{pr:S11}
\left\|\nabla u(t) - \nabla \bW(t)\right\|_{L^2}
+\left\| \partial_t u(t) - \bX(t) \right\|_{L^2} \lesssim t^{-3+2\delta}
\end{equation}
where $\lambda_k(t)$, $\by_k(t)$ are such that, for all $t\in [T_0,+\infty)$,
\begin{equation}\label{blay}
|\lambda_k(t)-\lambda_k^\infty|+|\by_k(t)-\by_k^\infty|\lesssim t^{-1}.
\end{equation}
\end{proposition}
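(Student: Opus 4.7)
\medskip
\noindent\textbf{Proof proposal.}
The plan is the standard compactness scheme for multi-soliton construction, driven by the energy monotonicity of Lemma~\ref{le:en} and the coercivity of Lemma~\ref{le:co}, with a Brouwer-type topological argument to handle the finitely many unstable directions $z_k^\pm$. Pick a sequence $S_n\to +\infty$ and, for each $n$, construct a solution $u_n$ backward in time from $t=S_n$ by prescribing
\[
\lambda_k(S_n)=\lambda_k^\infty,\qquad \by_k(S_n)=\by_k^\infty,\qquad
\vec u_n(S_n)=\vec\bW(S_n)+\sum_k\bigl(b_{k,n}^- \vec{\tilde\theta}_k \vec Z_{\bl_k}^-+b_{k,n}^+ \vec{\tilde\theta}_k \vec Z_{\bl_k}^+\bigr),
\]
with parameters $(b_{k,n}^\pm)\in \RR^{2K}$ of size $\leq S_n^{-7/2}$ to be fixed. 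By Lemma~\ref{le:dc}, the modulation is well-defined on a maximal interval $[T_n^\ast,S_n]$ on which the bootstrap~\eqref{eq:BS} holds, and the goal is to show $T_n^\ast=T_0$ for an appropriate choice of the $b_{k,n}^\pm$.

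Closing the bootstrap for $\cN$ is the heart of the argument and uses Lemma~\ref{le:en} directly: rewriting it as $\bigl(t^{2/L^2}\cH\bigr)'\gtrsim_\delta -t^{-6+3\delta+2/L^2}$ and integrating backward from $S_n$, where the choice of final data gives $\cH(S_n)\lesssim S_n^{-7}$, yields $\cH(t)\lesssim_\delta t^{-5+3\delta}$ on $[T_n^\ast,S_n]$. Combined with Lemma~\ref{le:co} and the bootstrap assumption $|z_k^\pm(t)|\leq t^{-7/2}$, this improves the bound on $\cN$ to $\cN(t)^2\lesssim_\delta t^{-5+3\delta}$, strictly better than $t^{-5+4\delta}$ for $T_0$ large enough. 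Closing the parameter bootstrap is then done by integrating~\eqref{eq:pC} from $S_n$: the main term $a_k/(\lambda_k^{1/2}t^2)$ contributes $O(t^{-1})$ and the error $O(t^{-3+2\delta})$ integrates to $O(t^{-2+2\delta})$, so $|\lambda_k(t)-\lambda_k^\infty|+|\by_k(t)-\by_k^\infty|\leq \tfrac{C_0}{2}t^{-1}$ for $C_0$ large.

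For the unstable directions, equation~\eqref{eq:zC} shows that each $z_k^+$ grows and each $z_k^-$ decays exponentially with rate $\sim t^{-1}$, up to a source of size $O(t^{-4+2\delta})$. Choosing $b_{k,n}^-$ to zero out the growing mode of the $z_k^-$-equation gives $|z_k^-(t)|\lesssim t^{-4+2\delta}\ll t^{-7/2}$ automatically for any $(b_{k,n}^+)$. The vector $(b_{k,n}^+)$ is then fixed by a topological (Brouwer) argument: the map that sends the initial point in the cube $[-S_n^{-7/2},S_n^{-7/2}]^K$ to the rescaled value $t^{7/2}(z_k^+(t))$ at the first exit time from that cube would, if no good choice existed, define a continuous retraction of the cube onto its boundary, which is impossible. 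Hence there exists $(b_{k,n}^+)$ for which $|z_k^+(t)|\leq t^{-7/2}$ on the whole interval, so $T_n^\ast=T_0$ and the full bootstrap~\eqref{eq:BS} holds on $[T_0,S_n]$.

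The proposition then follows by compactness: the uniform bounds on $(u_n(T_0),\partial_t u_n(T_0))$ in $\dot H^1\times L^2$ yield, along a subsequence, a weak limit $(u_0,u_1)$; the solution $u$ of~\eqref{eq:NW} with these data inherits the bootstrap estimates on $[T_0,+\infty)$, and \eqref{pr:S11}--\eqref{blay} are direct consequences via \eqref{eq:en}. The main obstacle is the Brouwer step: one must verify that the exit map depends continuously on $(b_{k,n}^+)$ uniformly in $n$, which in turn requires the strict transversality (outgoing flow) of the unstable directions at the boundary of the cube. This transversality follows from~\eqref{eq:zC} together with the size $O(t^{-4+2\delta})$ of the error, which is negligible compared to the bootstrap radius $t^{-7/2}$ at the exit time.
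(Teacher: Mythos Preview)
Your overall architecture is right and matches the paper's scheme, but there is a genuine error in the treatment of the exponential directions, and a gap in the compactness step.

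\textbf{The unstable modes are swapped.} From \eqref{eq:zC}, $\dot z_k^+ -\beta_k z_k^+=O(t^{-4+2\delta})$ and $\dot z_k^- +\beta_k z_k^-=O(t^{-4+2\delta})$ with $\beta_k=\sqrt{\lambda_0}(1-|\bl_k|^2)^{1/2}/\lambda_k>0$ a \emph{constant} (not a rate $\sim t^{-1}$). Integrating backward from $S_n$, it is $z_k^+$ that is stable: setting $z_k^+(S_n)=0$ and integrating $\frac{d}{dt}(e^{-\beta_k t}z_k^+)=O(e^{-\beta_k t}t^{-4+2\delta})$ on $[t,S_n]$ gives $|z_k^+(t)|\lesssim t^{-4+2\delta}$, strictly improving the bootstrap. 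Conversely $z_k^-$ grows exponentially backward, so it cannot be ``zeroed out'' by a single choice of $b_{k,n}^-$; this is precisely the direction that requires the Brouwer argument on the free parameter $z_k^-(S_n)=\xi_k\in\bar\cB_{\RR^K}(S_n^{-7/2})$. Your proposal applies the direct integration to $z_k^-$ and Brouwer to $z_k^+$, which is the reverse of what the ODEs allow. The transversality computation you sketch in the last paragraph is in fact the right one, but it must be carried out for $t^7\sum_k(z_k^-)^2$, where the drift term $-2\beta_k(z_k^-)^2$ pushes the flow outward on the sphere. (The paper also adds small corrections $\nu_{k,n}^\Lambda,\nu_{k,n}^\nabla$ to the data at $S_n$ so that the orthogonality \eqref{eq:or} and $z_k^+(S_n)=0$ hold exactly; this is a routine implicit-function step, but it is needed since you cannot simultaneously prescribe $\lambda_k(S_n)=\lambda_k^\infty$ and the form of $\vec u_n(S_n)$ you wrote.)

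\textbf{Weak compactness is not enough.} A weak limit of $\vec u_n(T_0)$ in $\dot H^1\times L^2$ does not by itself let you pass the nonlinear bootstrap estimates to the limit solution. The paper obtains \emph{strong} convergence in $\dot H^1\times L^2$ by proving two additional uniform facts on $[T_0,S_n]$: a uniform localization $\|\vec u_n(T_0)\|_{(\dot H^1\times L^2)(|x|>A)}\to 0$ as $A\to\infty$ (here a direct byproduct of the weighted bound $\cN(T_0)\lesssim 1$, since $\rho(T_0,x)\gtrsim |x|^{1-\alpha}$ for $|x|>LT_0$), and a uniform $\dot H^2\times\dot H^1$ bound obtained by persistence of regularity. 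Together these give Rellich compactness plus tightness, hence strong convergence of a subsequence; only then does continuous dependence in $\dot H^1\times L^2$ transfer \eqref{eq:BS} to $u$ on every compact time interval and yield \eqref{pr:S11}--\eqref{blay}.
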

Note that Proposition \ref{pr:S1} clearly implies Theorem \ref{th:un}.
Moreover, Proposition \ref{pr:S1} will be a key ingredient to prove Corollary \ref{th:dx}.
The rest of this section is devoted to the proof of Proposition~\ref{pr:S1}.

Let $S_n\to +\infty$.
Let $\zeta_{k,n}^{\pm}\in \RR$, $\nu_{k,n}^\Lambda\in\RR$  and $\nu_{k,n}^\nabla\in\RR^5$ small to be determined later.
For any large $n$, we consider the solution $u_n$ of
\begin{equation}\label{defun}
\left\{
\begin{aligned} 
\partial_t^2 u_n - \Delta u_n - |u_n|^{\frac 4{3}} u_n &= 0 \\ 
 (u_n(S_n),\partial_t u_n(S_n))^\TR & = \sum_k \bigg( \vec W_{k}^\infty (S_n) + c_k \vec V_k^\infty(S_n)
 +\sum_\pm \zeta_{k,n}^\pm ( \vec \theta_k^\infty \vec Z_{\bl}^\pm)(S_n)\\
&\quad
+ (\nu_{k,n}^\Lambda \Lambda_k W_k(S_n),0)^\TR + (\nu_{k,n}^\nabla \cdot \nabla W_k(S_n),0)^\TR \bigg).
\end{aligned}
\right.
\end{equation}
Since $(u_n(S_n),\partial_t u_n(S_n))\in \dot H^1\times L^2$, the solution $\vec u_n$ is well-defined in $\dot H^1\times L^2$ at least on a small interval of time around $S_n$.

We state uniform estimates on $u_n$, backwards in time up to some uniform $T_0\gg1$.
\begin{proposition}\label{pr:s4} There exist $n_0>0$ and $T_0>0$ such that, for any $n\geq n_0$, there exist
$(\zeta_{k,n}^{\pm})_k\in \RR^K \times \RR^K$, $(\nu_{k,n}^\Lambda)_k\in \RR^K$, $(\nu_{k,n}^\nabla)_k\in (\RR^5)^K$
with
\[
\sum_k |\zeta_{k,n}^{\pm}|^2 +\sum_k |\nu_{k,n}^\Lambda|^2+\sum_k |\nu_{k,n}^\nabla|^2 \lesssim {S_n^{-7}},
\]
and such that the solution $u_n$ of~\eqref{defun} is well-defined in $\dot H^1\times L^2$ on the time interval $[T_0,S_n]$
and satisfies, for all $t\in [T_0,S_n]$,
\begin{equation}\label{eq:un} 
\left\|\vec u_n(t) -\vec \bW_{n}(t)\right\|_{\dot H^1\times L^2} \lesssim t^{-3+2\delta}
\end{equation}
where $
\vec \bW_{n}(t,x)=
\vec \bW\left(t,x;\{\lambda_{k,n}(t)\},\{\by_{k,n}(t)\}\right)$ is defined in section \ref{S:2.2}
and
\begin{equation}\label{eq:deux}
|\lambda_{k,n}(t)-\lambda_k^\infty|+|\by_{k,n}(t)-\by_{k}^\infty|\lesssim t^{-1},\quad
\big| \dot \lambda_{k,n} (t)\big|+\big|\dot {\by}_{k,n} (t)\big|\lesssim t^{-2}.
\end{equation}
Moreover, 
\begin{itemize}
\item Uniform localization property:
\begin{equation}\label{eq:ul}
\mbox{For all $\epsilon>0$, there exists $A>0$ such that for all $n\geq n_0$, 
$\|u_n(T_0)\|_{(\dot H^1\times L^2)(|x|>A)} < \epsilon$.}
\end{equation}
\item Higher regularity property:
\begin{equation}\label{eq:hr}
\mbox{$\vec u_n\in \cC([T_0,S_n],\dot H^2\times \dot H^1)$ and for all $t\in [T_0,S_n]$,
$\|\vec u_n(t)\|_{\dot H^2\times \dot H^1} \lesssim 1$.}
\end{equation}
\end{itemize}
\end{proposition}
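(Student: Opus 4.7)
The plan is to follow the strategy of \cite[Section 5]{MaM2}, using a backward-in-time bootstrap argument on a maximal interval $[T^*_n,S_n]$ on which the estimates \eqref{eq:BS} are required to hold, but now with the new energy functional $\cH$ of Section~\ref{S:3} in place of the functional used there. I would fix the free parameters as follows: $(\nu_{k,n}^\Lambda,\nu_{k,n}^\nabla)$ at $t=S_n$ by an implicit function theorem enforcing the orthogonality conditions \eqref{eq:or} of Lemma~\ref{le:dc}; $\zeta_{k,n}^+=0$ (this mode is backward-stable, hence harmless); and $(\zeta_{k,n}^-)_k$, corresponding to the $K$ backward-unstable directions, by a Brouwer-type topological argument. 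A diagonal compactness argument in $n$ then extracts the solution of Proposition~\ref{pr:S1}.

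With these choices, $\|\vec\ep(S_n)\|_{\dot H^1\times L^2}\lesssim S_n^{-7/2}$, and since the perturbation is essentially supported where $\rho(S_n,\cdot)\lesssim S_n$, I get $\cN(S_n)^2\lesssim S_n^{-6}$ and, by Lemma~\ref{le:co}, $\cH(S_n)\lesssim S_n^{-6}$. Rewriting Lemma~\ref{le:en} as $(t^{2/L^2}\cH)'\gtrsim -C\,t^{-6+3\delta+2/L^2}$ and integrating backward from $S_n$ gives
\[
\cH(t)\leq (S_n/t)^{2/L^2}\cH(S_n)+C\,t^{-5+3\delta}\lesssim t^{-5+3\delta},
\]
the boundary term being negligible uniformly on $[T_0,S_n]$ as $S_n\to\infty$ because $2/L^2<6$ (indeed $L=1-\sigma$ is close to $1$). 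Coercivity then yields $\cN(t)^2\lesssim t^{-5+3\delta}$, strictly better than the bootstrap $\cN^2\leq t^{-5+4\delta}$. The modulation bounds in \eqref{eq:BS} and \eqref{eq:deux} are strictly improved by integrating \eqref{eq:pC} backward from the exact values $\lambda_{k,n}(S_n)=\lambda_k^\infty$, $\by_{k,n}(S_n)=\by_k^\infty$; and the stable mode $z_k^+$ is improved by solving \eqref{eq:zC} backward from $z_k^+(S_n)=O(S_n^{-7/2})$, the linear factor being contractive in backward time.

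The only bootstrap estimate resistant to energy improvement is $|z_k^-(t)|\leq t^{-7/2}$, since $z_k^-$ is backward-unstable. Letting $\zeta^-=(\zeta_{k,n}^-)_k$ range in a closed ball $B\subset\RR^K$ of radius $\sim S_n^{-7/2}$, I would define the map $\Phi:B\to\partial B$ sending $\zeta^-$ to the rescaled vector $R\,(T^*_n)^{7/2}(z_k^-(T^*_n))_k$ at the exit time $T^*_n=T^*_n(\zeta^-)$. If one had $T^*_n>T_0$ for every $\zeta^-\in B$, exit would be forced by saturation $\sum_k t^7(z_k^-(T^*_n))^2=R^{-2}$; transversality at this boundary follows from \eqref{eq:zC}, since for $T_0$ large the exponential rate $\sqrt{\lambda_0}/\lambda_k(1-|\bl_k|^2)^{1/2}$ dominates the $t^{-4+2\delta}$ residual, making $\Phi$ continuous. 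The map $\zeta^-\mapsto z^-(S_n)$ is diagonal to leading order (by near-orthogonality of profiles moving with distinct speeds), so $\Phi|_{\partial B}$ is homotopic to the identity on $\partial B$. This gives a continuous retraction of $B$ onto $\partial B$, contradicting Brouwer's theorem. Hence some $\zeta^-_n\in B$ yields $T^*_n=T_0$ and the full bootstrap closes. Finally, \eqref{eq:ul} follows from finite speed of propagation combined with the $O(A^{-3})$ decay of the soliton tails at $t=T_0$ and the uniform $L^2$-smallness of $\vec\ep(T_0)$; and \eqref{eq:hr} follows from a standard $\dot H^2\times\dot H^1$ energy identity on the bounded interval $[T_0,S_n]$ together with the $\dot H^1\times L^2$-Strichartz control derived from the bootstrap.

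\emph{Main obstacle.} The most delicate point is the interplay between the exponential instability of the $z_k^-$ modes and the precise smallness required at $t=S_n$. One must check that the boundary factor $(S_n/t)^{2/L^2}\cH(S_n)$ arising from the backward integration of the energy inequality stays negligible uniformly on $[T_0,S_n]$, a quantitative use of $L<1$, that is, of the small parameter $\sigma$ fixed in \eqref{eq:sg}. At the same time, transversality at the exit time $T^*_n$ must be verified for \emph{every} possible saturated index $k$ simultaneously, which imposes a lower bound on $T_0$ in terms of $\alpha,\delta$. Compatibility relies on $\sigma,\alpha,\delta$ being chosen independently and small, with $T_0$ taken last, large depending on all of them.
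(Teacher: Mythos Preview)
Your overall strategy matches the paper's: set up the data via an implicit-function lemma so that $z_k^+(S_n)=0$, $z_k^-(S_n)=\xi_k$ and the orthogonality conditions hold; run the bootstrap \eqref{eq:BS} backward; close $\lambda_k,\by_k,z_k^+$ by integrating \eqref{eq:pC}--\eqref{eq:zC}; close $\cN$ via Lemmas~\ref{le:co}--\ref{le:en}; and handle $z_k^-$ by the Brouwer retraction argument. Your treatment of the energy boundary term at $S_n$ is in fact more careful than the paper's terse ``$\cH(t_n)=0$'': you correctly observe that $\cH(S_n)\lesssim S_n^{-6}$ and that $(S_n/t)^{2/L^2}\cH(S_n)\to 0$ uniformly on $[T_0,S_n]$ since $2/L^2<6$.

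The one genuine gap is your justification of the uniform localization \eqref{eq:ul}. ``Uniform $L^2$-smallness of $\vec\ep(T_0)$'' gives only $\|\vec\ep(T_0)\|_{\dot H^1\times L^2}\lesssim T_0^{-3+2\delta}$, a fixed number that does not tend to $0$ as $A\to\infty$; combined with the $O(A^{-3})$ tails of $\vec\bW$ this yields at best $\|\vec u_n(T_0)\|_{(\dot H^1\times L^2)(|x|>A)}\lesssim T_0^{-3+2\delta}+A^{-1/2}$, which cannot be made smaller than any prescribed $\epsilon$. A finite-speed-of-propagation argument from $t=S_n$ (the route taken in \cite[Section~5.5]{MaM2}) does work but is considerably more involved than what you wrote. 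The paper's point, explicitly flagged as a main simplification over \cite{MaM2}, is that \eqref{eq:ul} is now \emph{free} from the weighted bound $\cN(T_0)\lesssim 1$: since $\rho(T_0,x)= T_0^{\alpha}(|x|/L)^{1-\alpha}$ for $|x|>LT_0$, one gets directly
\[
\int_{|x|>A}\bigl(|\nabla\ep(T_0)|^2+\eta(T_0)^2\bigr)\lesssim A^{-(1-\alpha)}\cN(T_0)^2\lesssim A^{-(1-\alpha)},
\]
which does go to $0$ as $A\to\infty$, uniformly in $n$. You should replace your localization sketch with this observation.
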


\subsection{Bootstrap setting}
We denote by $\cB_{\RR^K}(r)$ (respectively, $\cS_{\RR^K}(r)$) the open ball (respectively, the sphere) of $\RR^K$ of center $0$ and of radius $r>0$, for 
the norm $|(\xi_k)_k|= (\sum_{k} \xi_k^2 )^{1/2}$. 

For $t=S_n$ and for $t<S_n$ as long as $u_n(t)$ is well-defined in $\dot H^1\times L^2$ and satisfies~\eqref{hyp:4}, we will consider the decomposition of $\vec u_n(t)$ from Lemma~\ref{le:dc}. For simplicity of notation, we will denote the parameters $\lambda_{k,n}$, $\by_{k,n}$ and $\vec\varepsilon_n$ of this decomposition by $\lambda_k$, $\by_k$ and $\vec\varepsilon$.

We start with a technical result similar to~\cite[Lemma 3.1]{Je19} and \cite[Lemma 4.1]{CMYZ}. 
This result will allow us to adjust the data at $t=S_n$ in~\eqref{defun}.

\begin{lemma}[Choosing the data] \label{le:modu2}
There exist $n_0>0$ and $C>0$ such that, for all $n\geq n_0$, for any
$(\xi_k)_k\in \bar{\cB}_{\RR^K}(S_n^{-7/2})$, there exists
unique $(\zeta_{k,n}^{\pm})_k\in \cB_{\RR^K}(C S_n^{-7/2})$,
$(\nu_{k,n}^\Lambda)_k\in \mathcal{B}_{\RR^K}(C S_n^{-7/2})$
and $(\nu_{k,n}^\nabla)_k\in \mathcal{B}_{(\RR^5)^K}(C S_n^{-7/2})$ such that
the decomposition of $u_n(S_n)$ satisfies the estimates
\begin{equation}\label{modu:2}
z_k^-(S_n)=\xi_k,\quad
z_k^+(S_n)=0,
\end{equation}
\begin{equation}\label{modu3}
|\lambda_k(S_n)-\lambda_k^\infty|+
|\by_k(S_n)-\by_k^\infty|
+ \left\|\vec \varepsilon(S_n)\right\|_{\dot H^1\times L^2}
+\left\|\vec \varepsilon(S_n)\right\|_{\dot H^2\times \dot H^1}\lesssim S_n^{-7/2},
\end{equation}
and the orthogonality conditions \eqref{eq:or}.
\end{lemma}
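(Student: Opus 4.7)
My plan is to apply the quantitative implicit function theorem (equivalently a Banach fixed point) to a suitable map sending the free parameters $\vec p = ((\zeta_k^\pm),(\nu_k^\Lambda),(\nu_k^\nabla))\in\RR^{8K}$ into the constraints. First, for any $\vec p$ in the closed ball of radius $C S_n^{-7/2}$, one has by~\eqref{defun}
\[
\biggl\|\vec u_n(S_n)-\sum_k\bigl(\vec W_k^\infty+c_k\vec V_k^\infty\bigr)(S_n)\biggr\|_{\dot H^1\times L^2}\lesssim |\vec p|\leq CS_n^{-7/2},
\]
so for $n$ large the smallness hypothesis~\eqref{hyp:4} of Lemma~\ref{le:dc} holds and yields modulation parameters $\lambda_k(S_n)$, $\by_k(S_n)$ and an error $\vec\ep(S_n)$ satisfying the orthogonality~\eqref{eq:or} and depending continuously on $\vec p$, with the sizes controlled by \eqref{bounds}.

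Second, I would introduce the map
\[
\Psi(\vec p)=\bigl(z_k^-(S_n)-\xi_k,\ z_k^+(S_n),\ \lambda_k(S_n)-\lambda_k^\infty,\ \by_k(S_n)-\by_k^\infty\bigr)_{k}
\]
and look for a zero. At $\vec p=0$ the data is exactly $\sum_k(\vec W_k^\infty+c_k\vec V_k^\infty)(S_n)=\vec\bW(S_n)$ evaluated at the asymptotic parameters; by uniqueness of the modulated decomposition, $\lambda_k(S_n)=\lambda_k^\infty$, $\by_k(S_n)=\by_k^\infty$ and $\vec\ep(S_n)=0$, so $\Psi(0)=(-\xi_k,0,0,0)$ has norm $\leq S_n^{-7/2}$.

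Third, the differential $d\Psi(0)$ should have a nearly block-diagonal structure. A variation $\delta\zeta_k^\pm$ injects $\delta\zeta_k^\pm\,\vec\theta_k^\infty\vec Z_{\bl_k}^\pm$ directly into $\vec\ep$: since $\vec Z_{\bl_k}^\pm$ is orthogonal in $\dot H^1_{\bl_k}$ to both $\Lambda W_k$ and $\partial_j W_k$, this perturbation passes through modulation essentially undeflected and contributes a nondegenerate diagonal term $\delta_{k,k'}\delta_{\pm,\pm'}\,c_k^\pm$ to $\partial z_{k'}^{\pm'}$. A variation $\delta\nu_k^\Lambda$ (resp.\ $\delta\nu_k^\nabla$) added to the data breaks orthogonality along $\Lambda_k W_k$ (resp.\ $\partial_j W_k$) and is absorbed at leading order, via the nondegenerate Gram matrices $(\Lambda_k W_k,\Lambda_k W_k)_{\dot H^1_{\bl_k}}$ and $(\partial_i W_k,\partial_j W_k)_{\dot H^1_{\bl_k}}$, into a shift of $\lambda_k$ (resp.\ $\by_k$). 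Off-diagonal couplings between distinct solitons $k\neq m$ are suppressed: they involve pairings of functions localized around $\bl_k S_n+\by_k^\infty$ and $\bl_m S_n+\by_m^\infty$, at mutual distance $\gtrsim |\bl_k-\bl_m|S_n$, so by the $|x|^{-3}$ decay of $W$ they are $O(S_n^{-3})\ll 1$. Hence $d\Psi(0)$ is invertible with uniformly bounded inverse. A standard Lipschitz estimate on $\Psi-\Psi(0)-d\Psi(0)\cdot$ (which is quadratic in $\vec p$) then produces, by quantitative IFT, a unique $\vec p\in\bar\cB_{\RR^{8K}}(CS_n^{-7/2})$ with $\Psi(\vec p)=0$. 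The higher-regularity bound on $\|\vec\ep(S_n)\|_{\dot H^2\times\dot H^1}$ follows by direct inspection, since the data is an explicit finite sum of smooth, rapidly decaying functions and so is $\vec\bW(S_n)$.

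The main obstacle will be the quantitative invertibility of $d\Psi(0)$, which rests on the spectral properties of the linearized operator $\cL_{\bl_k}$ around each soliton---non-degeneracy of the self-pairing of $\vec Z_{\bl_k}^\pm$ and its orthogonality to the kernel directions $\Lambda W,\nabla W$ in $\dot H^1_{\bl_k}$---combined with the weak inter-soliton interactions already quantified in Lemmas~\ref{le:in} and~\ref{le:43}.
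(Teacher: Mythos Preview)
The paper does not actually give a proof of this lemma: it only states the result and refers to \cite[Lemma~3.1]{Je19} and \cite[Lemma~4.1]{CMYZ} as similar technical results. Your proposal---setting up a map $\Psi:\RR^{8K}\to\RR^{8K}$ sending the free data parameters to the constraints, checking $\Psi(0)=O(S_n^{-7/2})$, and showing that $d\Psi(0)$ is an invertible perturbation of a block-diagonal matrix with uniformly bounded inverse---is exactly the standard argument used in those references, and it is correct. The only point worth tightening is the claim that a variation $\delta\zeta_k^\pm$ contributes a \emph{diagonal} term to $\partial z_{k'}^{\pm'}$: you should also verify (or cite) the bi-orthogonality $(\vec\theta_k^\infty\vec Z_{\bl_k}^\pm,\vec{\tilde\theta}_k\vec Z_{\bl_k}^\mp)=0$, which follows from the explicit form of $\vec Z_{\bl_k}^\pm$ in \cite{MaM1,MaM2}.
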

From now on, for any $ (\xi_{k})_k\in \bar{\cB}_{\RR^K}(S_n^{-7/2})$,
we fix $(\zeta_{k,n}^{\pm})_k$, $(\nu_{k,n}^\Lambda)_k$, $(\nu_{k,n}^\nabla)_k$
 as given by Claim~\ref{le:modu2} and the corresponding solution $u_n$ of~\eqref{defun}.
We fix $0<\delta<\frac 1{10}$.
\smallbreak

The proof of Proposition~\ref{pr:s4} is based on the bootstrap estimates \eqref{eq:BS}.
Set
\begin{equation}\label{def:tstar}
T^*=T_n^*((\xi_{k})_k)=\inf\{t\in[T_0,S_n]\ ; \ \mbox{$u_n$ satisfies~\eqref{hyp:4} and \eqref{eq:BS} holds on $[t,S_n]$} \} .
\end{equation}
Note that by Claim~\ref{le:modu2}, estimate~\eqref{eq:BS} is satisfied 
at $t=S_n$.
Moreover, if~\eqref{eq:BS} is satisfied
on $[\tau,S_n]$ for some $\tau\leq S_n$ then by the well-posedness theory 
and continuity, 
$u_n(t)$ is well-defined and satisfies the decomposition of Lemma~\ref{le:dc} on $[\tau',S_n]$, 
 for some $\tau'<\tau$. 
In particular, the definition of $T^*$ makes sense. In what follows, we will prove that there exists $T_0$ large enough and at least one choice of $(\xi_{k})_{k}\in \cB_{\RR^K}(S_n^{-7/2})$ so that $T^*=T_0$, which is enough to finish the proof of Proposition~\ref{pr:s4}.

\subsection{Closing the bootstrap estimates}\label{s:4.2}
In sections \ref{s:4.2}--\ref{s:4.4}, we work on a solution $u_n$
for a given $n$ and we drop the index $n$ for the simplicity of notation.
Observe that  for the parameters $\lambda_k$, $\by_k$ and $z_k^\pm$, the bootstrap  estimates \eqref{eq:BS} are exactly the same as the ones in \cite[(5.8)]{MaM2}, and that the bootstrap estimate \eqref{eq:BS}  for the function
$\vec \varepsilon$ is stronger in the sense that \eqref{eq:BS} implies \eqref{eq:en}, 
which is exactly the estimate of $\vec \varepsilon$ in \cite[(5.8)]{MaM2}.
Therefore, except for the energy estimates on $\vec\varepsilon$, we will reproduce exactly the same proofs as in \cite[section~5.5]{MaM2}.

We start by closing estimates on all the parameters except the (backward) unstable modes.

\begin{lemma}[Closing estimates for $\lambda_k$, $\by_k$ and $z_k^+$]\label{le:bs1}
For $C_0>0$ large enough, for all $t\in[T^*,S_n]$,
\begin{equation}\label{eq:BSd}\left\{\begin{aligned}
& |\lambda_k(t)-\lambda_k^\infty|\leq \frac {C_0} 2t^{-1},\quad |\by_k(t)-\by_k^\infty|\leq \frac {C_0}2t^{-1},\\
&|z_k^+(t)|^2\leq \frac 12 t^{-7}.
\end{aligned}\right. \end{equation}
\end{lemma}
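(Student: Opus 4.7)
The plan is to integrate the modulation and unstable-mode ODE's from $t$ to $S_n$, using the endpoint data from Claim~\ref{le:modu2} together with the parameter bounds \eqref{eq:pC} and \eqref{eq:zC} that follow from the bootstrap assumptions \eqref{eq:BS}. For the translations and the forward mode $z_k^+$, the resulting bounds will be strictly sharper than the bootstrap, so they will close with room to spare. For the scaling parameters $\lambda_k$, the $t^{-1}$ decay is sharp and the constant $C_0$ must absorb a fixed (parameter-dependent) constant.

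First I treat $\lambda_k$. From \eqref{eq:pC}, one has
\[
\dot\lambda_k = \frac{a_k}{\lambda_k^{-\frac12}t^2} + \lambda_k \cdot O(t^{-3+2\delta}).
\]
Using $\lambda_k(t)=\lambda_k^\infty+O(C_0 t^{-1})$, the main term is $a_k (\lambda_k^\infty)^{\frac12} t^{-2}(1+O(C_0 t^{-1}))$, and integrating from $t$ to $S_n$ together with $|\lambda_k(S_n)-\lambda_k^\infty|\lesssim S_n^{-\frac 72}$ from \eqref{modu3} gives
\[
|\lambda_k(t)-\lambda_k^\infty|\le |a_k|(\lambda_k^\infty)^{\frac12}\, t^{-1}+C_1 t^{-2+2\delta}+O(S_n^{-\frac72}),
\]
with $C_1$ independent of $C_0$ and $T_0$. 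Fixing $C_0$ larger than, say, $4\max_k |a_k|(\lambda_k^\infty)^{\frac12}$ and then $T_0$ large enough, the right-hand side is bounded by $\frac{C_0}{2}t^{-1}$, which gives the first estimate in \eqref{eq:BSd}.

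For the translations, \eqref{eq:pC} yields $|\dot\by_k|\lesssim t^{-3+2\delta}$, so
\[
|\by_k(t)-\by_k^\infty|\le |\by_k(S_n)-\by_k^\infty|+\int_t^{S_n}|\dot\by_k(s)|\,ds
\lesssim S_n^{-\frac72}+t^{-2+2\delta}\lesssim t^{-2+2\delta},
\]
which for $T_0$ large is much smaller than $\frac{C_0}{2}t^{-1}$; this strict improvement closes the second estimate in \eqref{eq:BSd}. For the forward-unstable mode, \eqref{eq:zC} combined with the bootstrap gives a linear ODE
\[
\dot z_k^+ = \nu_k(t)\,z_k^++O(t^{-4+2\delta}),\qquad \nu_k(t)=\frac{\sqrt{\lambda_0}}{\lambda_k(t)}(1-|\bl_k|^2)^{\frac12}>0,
\]
together with $z_k^+(S_n)=0$ from \eqref{modu:2}. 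The Duhamel formula integrated backward reads
\[
z_k^+(t)=-\int_t^{S_n} \exp\!\Bigl(-\!\!\int_t^s\nu_k(\tau)\,d\tau\Bigr)\,O(s^{-4+2\delta})\,ds,
\]
and since $\nu_k$ is bounded below by a positive constant, the exponential factor is at most $1$ and decays rapidly, so $|z_k^+(t)|\lesssim t^{-4+2\delta}$. Then $|z_k^+(t)|^2\lesssim t^{-8+4\delta}\le \tfrac12 t^{-7}$ for $T_0$ large enough (using $4\delta<1$).

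The main obstacle is the $\lambda_k$ estimate, because the decay rate $t^{-1}$ that appears in the bootstrap is exactly the rate dictated by the main modulation term $a_k\lambda_k^{-1/2}t^{-2}$. Consequently one does not improve the bootstrap strictly but only up to the constant $C_0$, which is why $C_0$ must be chosen larger than an explicit, parameter-dependent constant; all other estimates close with a gain in the power of $t$, so choosing $T_0$ large depending on $C_0$, $\alpha$ and $\delta$ finishes the argument.
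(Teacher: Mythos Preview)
Your proof is correct and follows essentially the same strategy as the paper: integrate the parameter ODEs \eqref{eq:pC} and \eqref{eq:zC} backward from $S_n$ using the endpoint data from Claim~\ref{le:modu2}. The paper's presentation is slightly more streamlined---it uses the cruder bound $|\dot\lambda_k/\lambda_k|+|\dot{\by}_k|\le Ct^{-2}$ with $C$ independent of $C_0$ directly (rather than isolating the main $a_k\lambda_k^{1/2}t^{-2}$ term) and an integrating factor with a frozen coefficient $\beta_k$ for $z_k^+$---but the substance is the same; one small caveat is that your constant $C_1$ is not literally independent of $C_0$ (the $O(C_0 t^{-1})$ correction to $\lambda_k^{1/2}$ produces an $O(C_0 t^{-2})$ term after integration), though this lower-order error is harmlessly absorbed by taking $T_0$ large.
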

\begin{proof}
We reproduce the proof for the reader convenience.
From~\eqref{eq:pC} and~\eqref{eq:BS}, we have
$\left|\frac {\dot \lambda_k}{\lambda_k}\right|+|\dot {\by}_k|\leq C t^{-2}$
where the constant $C$ depends on the parameters of the two solitons, but not on the bootstrap constant~$C_0$. By integration on $[t,S_n]$ for 
$T^*\leq t\leq S_n$, and~\eqref{modu3}, we obtain
\[
|\lambda_k(t)-\lambda_k^\infty|\leq |\lambda_k(t)-\lambda_k(S_n)|+|\lambda_k(S_n)-\lambda_k^\infty|\leq C' t^{-1},
\]
and similarly, $|\by_k(t)-\by_k^{\infty}|\leq C't^{-1}$, 
where $C'$ is also independent of $C_0$. We choose $C_0=2 C'$.
Now, we prove the bound on $z_k^+(t)$. Let $\beta_k=\frac{\sqrt{\lambda_0}}{\lambda_k^{\infty}}(1-|\bl_k|^2)^{1/2}>0$.
Then, from~\eqref{eq:zC} and~\eqref{eq:BS}, 
\[
\frac d{dt} \left( e^{-\beta_k t} z_k^+\right)\lesssim e^{-\beta_k t} t^{-4+2\delta}. 
\]
Integrating on $[t,S_n]$ and using~\eqref{modu:2}, we obtain
$- z_k^+(t) \lesssim t^{-4+2\delta}$.
Doing the same for $-e^{-\beta_k t} z_k^+$, we obtain the conclusion for $T_0$ large enough.
\end{proof}

Now, we strictly improve the bootstrap estimate for $\vec\varepsilon$
using the norm \eqref{eq:NN} and the functional $\cH$ defined in section~\ref{S:3}.

\begin{lemma}[Closing estimates for $\vec \varepsilon$]\label{le:bs1bis}
For $C_0>0$ large enough, for all $t\in[T^*,S_n]$,
\begin{equation}\label{eq:BSe}
\cN(t)\leq \frac 12 t^{-\frac 52 +2\delta}.
\end{equation}
\end{lemma}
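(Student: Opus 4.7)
The plan is to combine the coercivity of $\cH$ (Lemma \ref{le:co}) with the differential inequality of Lemma \ref{le:en}, integrated backward from $t=S_n$, and then convert the resulting bound on $\cH$ back into the desired bound on $\cN$.

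First, I would recast Lemma \ref{le:en} in integrated form via an integrating factor. Setting $\beta := 2/L^2 = 2/(1-\sigma)^2$, which satisfies $\beta < 5-3\delta$ for $\sigma,\delta$ sufficiently small, Lemma \ref{le:en} reads
\[
\frac{d}{dt}\bigl(t^\beta \cH(t)\bigr) \gtrsim_\delta -t^{\beta - 6 + 3\delta}.
\]
Integrating on $[t,S_n]$ for $t\in[T^*,S_n]$ and using $\int_t^{S_n} s^{\beta-6+3\delta}\,ds \lesssim_\delta t^{\beta - 5 + 3\delta}$ (since $\beta - 5 + 3\delta < -1$) yields
\[
\cH(t) \leq \Bigl(\frac{S_n}{t}\Bigr)^\beta \cH(S_n) + C_\delta\, t^{-5+3\delta}.
\]

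Next I would estimate the terminal value $\cH(S_n)$. From \eqref{modu3} we have $\|\vec\varepsilon(S_n)\|_{\dot H^1\times L^2}\lesssim S_n^{-7/2}$, and by the explicit prescription of the data in Lemma \ref{le:modu2} together with \eqref{defun}, $\vec\varepsilon(S_n)$ is a linear combination, with coefficients $O(S_n^{-7/2})$, of smooth profiles ($\vec Z_{\bl}^\pm$, $\Lambda W$, $\nabla W$ and soliton differences) concentrated near the centers $\bl_k S_n+\by_k^\infty$. Since $|\bl_k|\leq \bar\ell < L$ by the definition of $\sigma$, these centers all lie inside $\{|x|<LS_n\}$, where $\rho(S_n,\cdot)=S_n$; the inner contribution is then bounded by $S_n\|\vec\varepsilon(S_n)\|_{\dot H^1\times L^2}^2 \lesssim S_n^{-6}$. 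For $|x|>LS_n$, $|x-\bl_kS_n|\gtrsim \sigma S_n$, so the polynomial decay of the profiles gives $\int_{|x|>LS_n}\rho|\nabla\varepsilon(S_n)|^2\lesssim S_n^{-7}\int_{|y|\gtrsim S_n}|y|^{-3-\alpha}\,dy\,\cdot\,S_n^{\alpha}\lesssim S_n^{-9}$. The cross and nonlinear terms in $\cH$ are bounded similarly using \eqref{eq:bW}. Altogether $\cH(S_n)\lesssim S_n^{-6}$, and since $\beta<6$,
\[
\Bigl(\frac{S_n}{t}\Bigr)^\beta \cH(S_n) \lesssim S_n^{\beta-6}\,t^{-\beta} \leq S_n^{-1-3\delta}\,t^{-5+3\delta},
\]
where I used $t\leq S_n$ and the fact that $t^{5-3\delta-\beta}$ is increasing. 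Hence for all $n$ large enough, $\cH(t)\leq 2C_\delta\,t^{-5+3\delta}$ on $[T^*,S_n]$.

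Finally I would invert coercivity. By Lemma \ref{le:co} and the bootstrap bound $|z_k^\pm(t)|^2 \leq t^{-7}$,
\[
\mu\,\cN(t)^2 \leq \cH(t) + \frac{1}{\mu}\sum_k\bigl((z_k^-)^2+(z_k^+)^2\bigr) \lesssim t^{-5+3\delta} + t^{-7} \lesssim t^{-5+3\delta},
\]
so $\cN(t) \leq C\,t^{-5/2 + 3\delta/2}$. Since $3\delta/2 < 2\delta$, enlarging $T_0$ so that $C\,T_0^{-\delta/2}\leq 1/2$ gives $\cN(t) \leq \tfrac{1}{2}\,t^{-5/2+2\delta}$, strictly improving the bootstrap.

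The main obstacle is the weighted terminal estimate $\cH(S_n)\lesssim S_n^{-6}$ of the second step: the weight $\rho$ grows like $|x|^{1-\alpha}$ at infinity, so the $\dot H^1\times L^2$-smallness provided by Lemma \ref{le:modu2} is not by itself sufficient, and one must genuinely exploit the explicit spatial localization of the perturbation $\vec\varepsilon(S_n)$ dictated by the data \eqref{defun}. Once the terminal estimate is secured, the remainder is a direct Gronwall/coercivity argument as above, parallel in structure to the analogous closure in \cite[section 5.5]{MaM2} but based on the new functional $\cH$ introduced here.
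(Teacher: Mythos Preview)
Your proposal is correct and follows essentially the same route as the paper: multiply the differential inequality of Lemma~\ref{le:en} by $t^{2/L^2}$, integrate backward from $S_n$, and invert via the coercivity Lemma~\ref{le:co} together with the bootstrap bound $|z_k^\pm|^2\leq t^{-7}$.

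The only substantive difference is your careful treatment of the terminal value $\cH(S_n)$. The paper simply writes ``using $\cH(t_n)=0$'', which is not literally true since $\vec\varepsilon(S_n)\neq 0$ after modulation; you instead exploit the explicit localization of the data in~\eqref{defun} to show $\cH(S_n)\lesssim S_n^{-6}$, which after multiplication by $(S_n/t)^{2/L^2}$ is harmlessly absorbed into the $t^{-5+3\delta}$ bound. This is the right way to make the paper's step rigorous. Your closing via $T_0$ large (using the gap $3\delta/2<2\delta$) is also cleaner than the paper's appeal to an $\alpha^{1/10}$ factor, which is not visibly produced by Lemmas~\ref{le:co} and~\ref{le:en} as stated.
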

\begin{proof}
Integrating the differential inequality from Lemma \ref{le:en},
\[
\frac d{dt}\left( t^{\frac2{L^2}} \cH \right) \geq - C_\delta t^{\frac2{L^2}-6+3\delta}
\]
on $[t,T_n]$, we obtain using $\cH(t_n)=0$,
\[
\cH(t) \lesssim_\delta   t^{-5+3\delta}.
\]
Using the coercivity Lemma \ref{le:co} and $(z_k^-)^2+ (z_k^+)^2 \lesssim t^{-7}$ from \eqref{eq:BS},
we obtain
\[
\|\nabla\ep\|_\rho+\|\eta\|_\rho\lesssim \alpha^\frac1{10} t^{-\frac52+2 \delta},
\]
which implies \eqref{eq:BSe} for $\alpha$ small enough.
\end{proof}

Lastly, we control the (backward) unstable parameters $z_k^-$ using a topological argument.
The proof is identical to \cite[Lemma 5.6]{MaM2}.

\begin{lemma}[Control of unstable directions]\label{le:bs2}
There exist 
$(\xi_{k,n})_{k}\in \cB_{\RR^K}(S_n^{-7/2})$ such that, for $C^*>0$ large enough, $T^*((\xi_{k,n})_k)=T_0$.
\end{lemma}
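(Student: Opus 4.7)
The plan is to run a topological (Brouwer-type) argument, following the scheme of \cite[Lemma 5.6]{MaM2}. Arguing by contradiction, suppose $T^*((\xi_k)_k) > T_0$ for every $(\xi_k)_k \in \bar{\cB}_{\RR^K}(S_n^{-7/2})$. The goal is then to construct a continuous retraction of $\bar{\cB}_{\RR^K}(S_n^{-7/2})$ onto its boundary, contradicting Brouwer's no-retraction theorem.

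\textbf{Identification of the exit mode.} First, observe that Lemmas \ref{le:bs1} and \ref{le:bs1bis} strictly improve the bootstrap bounds on $\lambda_k$, $\by_k$, $z_k^+$ and on $\cN$ throughout $[T^*,S_n]$. By continuity at $T^*$, this forces the only remaining estimate in \eqref{eq:BS} to saturate:
\[
\sum_k (z_k^-(T^*))^2 = (T^*)^{-7}.
\]

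\textbf{Transversality at the exit.} Next, from \eqref{eq:zC} the unstable modes satisfy
\[
\frac{d}{dt}z_k^- = -\beta_k z_k^- + O(t^{-4+2\delta}),\qquad \beta_k = \frac{\sqrt{\lambda_0}}{\lambda_k}(1-|\bl_k|^2)^{\frac 12},
\]
and $\beta_k \geq 2\beta > 0$ uniformly in $n$. Multiplying by $z_k^-$, summing over $k$, and applying Cauchy-Schwarz together with the saturation identity, one obtains
\[
\frac{d}{dt}\Bigl[\sum_k (z_k^-)^2 - t^{-7}\Bigr]_{\mid t=T^*} \leq -4\beta (T^*)^{-7} + 7(T^*)^{-8} + C(T^*)^{-\frac{15}{2}+2\delta}.
\]
For $\delta < 1/4$ and $T_0$ large, the right-hand side is strictly negative. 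This transversality yields in turn the continuous dependence of $T^*$ on $(\xi_k)_k$ and the fact that for $t$ slightly smaller than $T^*$ the bootstrap is violated.

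\textbf{Brouwer step.} Define
\[
\Phi : \bar{\cB}_{\RR^K}(S_n^{-7/2}) \to \cS_{\RR^K}(1),\qquad \Phi((\xi_k)_k) = (T^*)^{\frac 72} (z_k^-(T^*))_k.
\]
By continuous dependence of $u_n$ on the data \eqref{defun} (via Lemma \ref{le:modu2}) and by the transversality above, $\Phi$ is continuous. For $(\xi_k)_k$ on the boundary $\partial\bar{\cB}_{\RR^K}(S_n^{-7/2})$, saturation already holds at $t=S_n$, hence $T^* = S_n$ and $\Phi(\xi) = S_n^{7/2}\xi$, a homeomorphism of the boundary onto $\cS_{\RR^K}(1)$. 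Composing $\Phi$ with the inverse of this boundary homeomorphism produces a continuous retraction of $\bar{\cB}_{\RR^K}(S_n^{-7/2})$ onto its boundary, contradicting Brouwer's theorem. Hence some $(\xi_{k,n})_k$ satisfies $T^*((\xi_{k,n})_k) = T_0$.

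\textbf{Main obstacle.} The only delicate point is the transversality computation: the linear drift $-\beta_k z_k^-$, of size $(T^*)^{-7}$, must dominate both the $7(T^*)^{-8}$ motion of the shrinking threshold and the source term coming from \eqref{eq:zC}. This is the origin of the compatibility constraint between the bootstrap exponent $-7/2$ on $z_k^-$ and the remainder exponent $-4+2\delta$ in the equation for $z_k^-$, and forces $\delta$ to be taken small. Everything else reduces to continuous dependence on the data and the standard no-retraction argument.
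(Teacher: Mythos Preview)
Your proposal is correct and follows essentially the same approach as the paper: both argue by contradiction, identify the saturation of the $z_k^-$ bound at $T^*$, establish transversality from \eqref{eq:zC}, and conclude via a Brouwer-type argument. The only differences are cosmetic --- the paper phrases the transversality through $\frac{d}{dt}\bigl(t^7\sum_k(z_k^-)^2\bigr)$ rather than $\frac{d}{dt}\bigl(\sum_k(z_k^-)^2-t^{-7}\bigr)$, and normalizes the retraction map to land on $\cS_{\RR^K}(S_n^{-7/2})$ instead of $\cS_{\RR^K}(1)$.
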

\begin{proof}
We follow the strategy initiated in \cite[Lemma 6]{CoMM} to control unstable direction in such multi-soliton constructions.
The proof is by contradiction, we assume that for any $(\xi_k)_k\in \bar{\cB}_{\RR^K}(S_n^{-7/2})$,
$T^*((\xi_k)_k)$ defined by~\eqref{def:tstar} satifies $T^*\in (T_0,S_n]$. In this case, by Lemma~\ref{le:bs1} and continuity,
 it holds necessarily
\[
\sum|z_{k}^-(T^*)|^2= \frac{1}{(T^*)^{7}}.
\]
Let $\bar \beta = \min_k \beta_k$.
From~\eqref{eq:zC} and~\eqref{eq:BS}, for all $t\in [T^*,S_n]$, one has
\[
\frac{d}{dt}\left(t^7 \left(z_{k}^- \right)^2\right) =
2 t^7 z_{k}^- \frac{d}{dt}z_{k}^-
+ 7 t^6 \left(z_{k}^- \right)^2 \leq 
- 2 t^7 \beta_k 
\left(z_{k}^- \right)^2 + \frac{C }{t} \leq -2 \bar \beta t^7 \left(z_{k}^- \right)^2 + \frac{C }{t}.
\]
Thus, for $T_0$ large enough, and any $t_0\in [T^*,S_n]$,
\[
\sum|z_{k}^-(t_0)|^2= \frac{1}{t_0^{7}}\quad \mbox{implies}\quad 
\left. \frac{d }{dt} \left(t^{7} \sum|z_{k}^-(t)|^2\right)\right|_{t=t_0} \leq 
- 2\bar \beta 
+ \frac{C}{T_0} \leq- \bar \beta. 
\]
As a standard consequence of this transversality property, the maps
\[
(\xi_k)_{k}\in \bar{\cB}_{\RR^K}(S_n^{-7/2}) \mapsto T^*((\xi_k)_k)
\]
and 
\[
(\xi_k)_k\in \bar{\cB}_{\RR^K}( S_n^{-7/2} ) \mapsto 
\cM((\xi_k)_k)= \left(\frac {T^*}{S_n}\right)^{7/2} (z_k^-(T^*))_k\in \cS_{\RR^K}( S_n^{-7/2} )\]
are continuous.
Moreover, $\cM$ restricted to $\cS_{\RR^K}( S_n^{-7/2} )$ is the identity
and this is contradictory with Brouwer's fixed point theorem. 
\smallbreak

Estimates~\eqref{eq:un} follow directly from the estimates~\eqref{eq:BS} on $\varepsilon(t)$, $\lambda_k(t)$, $\by_k(t)$.
\end{proof}

\subsection{Proof of the uniform localization property}
From the bootstrap estimate \eqref{eq:BS}, which we know is satisfied at $t=T_0$,
we know that uniformly in $n\geq n_0$, it holds
\[
\int \left(|\nabla\varepsilon(T_0)|^2+\eta^2(T_0)|\right) \rho(T_0,x) dx =
\|\nabla \varepsilon(T_0)\|_\rho^2+\|\eta(T_0)\|_\rho^2\lesssim 1
\]
where by definition the weight function $\rho(T_0)$ satisfies for $|x|>LT_0$,
\[
\rho(T_0,x)=T_0^\alpha L^{1-\alpha} |x|^{1-\alpha},
\]
where $\alpha>0$ is small. Thus, for any $A>LT_0$, it holds
\[
\int_{|x|>A} \left(|\nabla\varepsilon(T_0)|^2+\eta^2(T_0)|\right) dx
\lesssim A^{\alpha-1} \int \left(|\nabla\varepsilon(T_0)|^2+\eta^2(T_0)|\right) \rho(T_0,x) dx
\lesssim A^{\alpha-1},
\]
which implies the uniform estimate \eqref{eq:ul}.

Here, the uniform localization property is obtained as a simple consequence of the uniform bound on the norm $\cN$. 
This is a main difference with \cite{MaM2}, where such a property had to be proved separately 
in \cite[section 5.5]{MaM2}.

\subsection{Proof of the regularity property}\label{s:4.4}

The proof of the uniform bound in $\dot H^2\times \dot H^1$ is omitted since it is identical
to the one in \cite[section 5.4]{MaM2}.

\subsection{End of the proof of existence} 
From the estimates of Proposition~\ref{pr:s4} on the sequence of initial data 
$(\vec u_n(T_0))$, it follows that 
up to the extraction of a subsequence (still denoted by $(\vec u_n)$), the sequence $(\vec u_n(T_0))$ converges to some $(u_0,u_1)^\TR$
in $\dot H^1\times L^2$ as $n\to +\infty$.
Consider the solution $u(t)$ of~\eqref{eq:NW} associated to the initial data $(u_0,u_1)^\TR$ at $t=T_0$.
Then, by the continuous dependence of the solution of~\eqref{eq:NW} with respect to its initial data in the energy space $\dot H^1\times L^2$ 
(see \emph{e.g.}~\cite{KeMe} and references therein) and the uniform bounds~\eqref{eq:un}, the solution $u$ is well-defined in the energy space on $[T_0,\infty)$.

Recall that we denote by $\lambda_{k,n}$ and $\by_{k,n}$ the parameters of the decomposition of $u_n$ on $[T_0,S_n]$.
By the uniform estimates in~\eqref{eq:deux}, using Ascoli's theorem and a diagonal argument, it follows that there exist continuous functions $\lambda_k$ and $\by_k$ such that up to the extraction of a subsequence, $\lambda_{k,n}\to \lambda_k$, $\by_{k,n}\to \by_k$ uniformly on compact sets of $[T_0,+\infty)$, and on $[T_0,+\infty)$,
\[
|\lambda_k(t)-\lambda_k^\infty|\lesssim t^{-1},\quad |\by_k(t)-\by_k^\infty|\lesssim t^{-1}.
\]
Passing to the limit in~\eqref{eq:un} for any $t\in [T_0,+\infty)$, 
we finish the proof of Proposition~\ref{pr:S1}.

\section{Inelasticity}\label{S:5}
The proof of the inelasticity result Corollary \ref{th:dx}, based on the refined
existence result Proposition \ref{pr:S1} and arguments from \cite{MaM2} is a direct extension of
\cite[section 6]{MaM2}. Therefore, we will only sketch the proof.
Our goal is to prove that the solution obtained in Proposition \ref{pr:S1} (observe that the statement of Proposition \ref{pr:S1} is more precise than the one of Theorem \ref{th:un})
satisfies \eqref{eq:di}.

The first step is to justify a remark after Corollary \ref{th:dx}
saying that it makes sense to consider the solution
$u(t,x)$, for any time, but outside a certain wave cone. Indeed, let 
\[
R\gg1 \quad\mbox{and}\quad t_R=R^\frac{11}{12}.
\]
Let $\chi_1:\RR^5\to \RR$ be a smooth, radially symmetric function such that $\chi_1\equiv 1$ for $|y|>1$
and $\chi_1\equiv 0$ for $|y|<\frac 12$. Let $\chi_R(x)= \chi_1(x/R)$. Define $\vec u_R=(u_R,\partial_t u_R)^\TR$ the solution of~\eqref{eq:NW} with the following data at the time $t_R$
\[
u_R(t_R) = u(t_R) \chi_R,\quad \partial_t u_R(t_R) = \partial_t u(t_R) \chi_R.
\]
Then, from~\cite[Claim 6.1]{MaM2}, it holds, for $R>0$ sufficiently large,
\begin{equation}\label{eq:61}
\|\vec u_R(t_R) \|_{\dot H^1\times L^2} \lesssim R^{-\frac 32}.
\end{equation}
We omit the proof of \eqref{eq:61}, based on \eqref{eq:bW} and \eqref{pr:S11},
and refer to \cite{MaM2}.
By the small data Cauchy theory (\cite{KeMe}), for $R$ large enough, 
the solution $\vec u_R$ is global and bounded in $\dot H^1\times L^2$.
Moreover, since
$\vec u_R(t_R) = \vec u(t_R)$, for $|x|>R$,
by the property of finite speed of propagation of the wave equation, 
we can define globally $\vec u(t,x)$ on $\Sigma_R$ by setting $u(t,x)=u_R(t,x)$. 
The point is that this extension makes sense even if $u(t)$ is not global in $\dot H^1\times L^2$ for negative times.
Our goal in the rest of this section is to prove the following statement, for $R$ large,
\begin{equation}\label{reduction}
\liminf_{t\to -\infty} \|\nabla u(t)\|_{L^2(|x|>R+|t-t_R|)} \gtrsim R^{-\frac 52} ,
\end{equation}
which implies, for $A=R+t_R$ large enough,
\[
\liminf_{t\to -\infty} \|\nabla u(t)\|_{L^2(|x|>|t|+A)} \gtrsim A^{-\frac 52}.
\]

The second step of the proof of Corollary \ref{th:dx} is to show that it is sufficient to consider
the linear wave equation instead of the nonlinear equation \eqref{eq:NW}.
Indeed, define $\vec u_{\rm L} = (u_{\rm L},\partial_t u_{\rm L})^\TR$ the (global) solution of the $5$D linear wave equation with initial data at $t=t_R$,
\begin{equation}\label{uL}\left\{\begin{aligned}
& \partial_t^2 u_{\rm L} - \Delta u_{\rm L} = 0 \quad \mbox{on $\RR\times \RR^5$},\\
& u_{\rm L}(t_R)=u_R(t_R) = u(t_R) \chi_R,\quad \partial_t u_{\rm L}(t_R)=\partial_t u_R(t_R) = \partial_t u(t_R) \chi_R \quad
\mbox{on $\RR^5$.}
\end{aligned}\right.\end{equation}
Using \eqref{eq:61} and \cite[Proposition~2.12]{MaM2}, it follows that for $R$ large enough, 
\begin{equation}\label{reduc_lin}
\sup_{t\in \RR} \|\vec u_{\rm L}-\vec u_{R}\|_{\dot H^1\times L^2} \lesssim R^{-\frac 73 \cdot \frac 32} = R^{-\frac 72}.
\end{equation}
To prove~\eqref{reduction}, it is thus sufficient to obtain the same statement for $\vec u_{\rm L}$.

For future use, we also define
\[
w_\beta(t,x)=\frac \epsilon{\lambda^{\frac 32}} W_\bl\left(\frac{x-\bl t-\by}{\lambda} \right), \quad \vec w_\beta= \left(\begin{array}{c} w_\beta \\ \partial_t w_\beta \end{array}\right).
\]
where $\beta=(\bl,\lambda,\by,\epsilon)$,
and $\vec w_{\beta,R}=(w_{\beta,R},\partial_t w_{\bl,R})^\TR$ the solution of~\eqref{eq:NW} with truncated data at $t_R$
\[
w_{\beta,R}(t_R) = w_\beta(t_R) \chi_R,\quad \partial_t w_{\beta,R}(t_R) = \partial_t w_\beta(t_R) \chi_R.
\]
Moreover, we define
$\vec w_{\beta,\rm L} = (w_{\beta,\rm L},\partial_t w_{\beta,\rm L})^\TR$ the solution of the $5$D linear wave equation with data at $t_R$
\[
 w_{\beta,\rm L}(t_R)= w_{\beta,R}(t_R) = w_\beta(t_R) \chi_R,\quad \partial_t w_{\beta,\rm L}(t_R)=\partial_t w_{\beta,R}(t_R) = \partial_t w_\beta(t_R) \chi_R.
\]
Lastly, we define
\[
\beta_k = (\bl_k,\lambda_k(t_T),\by_k(t_R),\epsilon_k).
\]

The third ingredient, which is the key argument in the inelasticity proof, is the theory of channel of energy for radial solutions of the linear wave equation in $\RR^5$.
We state here the specific result from~\cite{KeLS} to be used in the proof
(see also~\cite{DKM1} and \cite{KLLS} for similar statements in any odd space dimension).
\begin{proposition}[\cite{KeLS}, Proposition~4.1]\label{pr:ch}
There exists a constant $C>0$ such that any radial energy solution $U_{\rm L}$ of the {\rm $5$D} linear wave equation
\[
\left\{ \begin{aligned}
&\partial_t^2 U_{\rm L} - \Delta U_{\rm L} = 0, \quad (t,x)\in (-\infty,\infty)\times \RR^5,\\
& {U_{\rm L}}_{|t=0} = U_0\in \dot H^1,\quad 
{\partial_t U_{\rm L}}_{|t=0} = U_1\in L^2,
\end{aligned}\right.
\]
satisfies, for any $R>0$, either
\[
\liminf_{t\to -\infty} \int_{|x|>|t|+R} |\partial_t U_{\rm L}(t,x)|^2+|\nabla U_{\rm L}(t,x)|^2 dx 
\geq C\| \pi^\perp_R (U_0,U_1) \|^2_{(\dot H^1\times L^2) (|x|>R)}
\]
or
\[
\liminf_{t\to +\infty} \int_{|x|>|t|+R} |\partial_t U_{\rm L}(t,x)|^2+|\nabla U_{\rm L}(t,x)|^2 dx 
\geq C \| \pi^\perp_R (U_0,U_1) \|^2_{(\dot H^1\times L^2) (|x|>R)}
\]
where $\pi^{\perp}_R(U_0,U_1)$ denotes the orthogonal projection of $(U_0,U_1)^\TR$ onto the complement of the plane
\[
 {\rm span} \left\{ (|x|^{-3},0)^\TR,(0,|x|^{-3})^\TR \right\}
\]
in $(\dot H^1\times L^2) (|x|>R)$.
\end{proposition}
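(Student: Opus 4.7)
The plan is to prove the channel-of-energy estimate by reducing the radial linear wave equation in $\RR^5$ to a one-dimensional wave equation on a half-line, for which the flux of energy into the exterior region is explicit. First, by finite speed of propagation, the values of $U_{\rm L}(t,x)$ in $\{|x|>|t|+R\}$ depend only on the restriction of $(U_0,U_1)$ to $\{|x|>R\}$, so we may replace $(U_0,U_1)$ by this restriction without changing either side of the estimate.

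Second, I would use the classical reduction specific to odd space dimensions. In $\RR^5$ there is an explicit intertwining operator $T$, built from $r$-weighted derivatives, that conjugates the radial wave operator $\partial_t^2-(\partial_r^2+\tfrac{4}{r}\partial_r)$ with the $1$D wave operator $\partial_t^2-\partial_r^2$. Thus, if $U_{\rm L}$ is a radial solution, then $v:=TU_{\rm L}$ solves $\partial_t^2 v-\partial_r^2 v = 0$ on $(R,+\infty)$. The operator $T$ maps the radial $\dot H^1\times L^2$ energy space on $\{|x|>R\}$ isomorphically, modulo a finite-dimensional kernel, onto the natural $1$D energy space on $(R,+\infty)$. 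The kernel corresponds to stationary radial harmonics in $\RR^5$; since radial harmonics have the form $a+b|x|^{-3}$ and the constant is trivial in $\dot H^1$, one checks that $\ker T$ is spanned precisely by $\{(|x|^{-3},0),(0,|x|^{-3})\}$, which explains the exact form of the projection $\pi_R^\perp$ appearing in the statement.

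Third, for the one-dimensional wave equation on a half-line, d'Alembert's formula writes $v(t,r)=F(r-t)+G(r+t)$, with $F$ and $G$ determined explicitly by the transformed data $(V_0,V_1)=(TU_0,TU_1)$. A direct change of variables shows that the energy of $v$ in $\{r>|t|+R\}$ as $t\to-\infty$ captures the $L^2$ mass of the outgoing component $F$ on $(R,+\infty)$, while the limit $t\to+\infty$ captures the incoming component $G$ on the same set. Since $\|F'\|_{L^2}^2+\|G'\|_{L^2}^2$ controls the full $1$D energy norm of $(V_0,V_1)$ from below, at least one of the two one-sided $\liminf$ is at least half of this quantity; pulling back via $T$ and using the identification of $\ker T$ yields the lower bound by $\|\pi_R^\perp(U_0,U_1)\|^2_{(\dot H^1\times L^2)(|x|>R)}$.

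The main obstacle is establishing the two-sided norm equivalence between the $5$D radial energy of $(U_0,U_1)$ modulo the kernel and the $1$D energy of $(TU_0,TU_1)$; this is a nontrivial computation involving the explicit form of $T$, a careful sequence of integrations by parts on the half-line, and a proper treatment of the boundary behavior at $r=R$ induced by the truncation to the exterior region. A second, more conceptual point is that the dichotomy between $t\to-\infty$ and $t\to+\infty$ cannot in general be improved to a simultaneous two-sided lower bound, since the characteristic profiles $F$ and $G$ may carry very unequal masses — this is why the statement is phrased as an alternative rather than a conjunction.
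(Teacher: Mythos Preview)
The paper does not prove this proposition: it is quoted verbatim from \cite{KeLS} (Kenig--Lawrie--Schlag, Proposition~4.1) and used as a black box in Section~\ref{S:5}. Your sketch is a faithful outline of the standard proof in that reference and in the related work \cite{DKM1,KLLS}: reduction of the radial $5$D wave equation to a $1$D wave equation on the half-line via the intertwining operator, identification of the kernel with the span of the radial harmonic $|x|^{-3}$, and the d'Alembert splitting into incoming/outgoing profiles to obtain the dichotomy. The ``main obstacle'' you flag --- the two-sided norm equivalence modulo the kernel --- is exactly the content of the computation carried out in \cite{KeLS}; so there is nothing to correct, but also nothing to compare against in the present paper.
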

To use such a result, we need to consider a radial solution of the linear wave equation, 
though of course, multi-solitons such as the solution $u$ of Proposition \ref{pr:S1},
or the restriction $u_L$  are by construction non-radial.

We define
\begin{equation}\label{defUL}
U_{\rm L}(t,x) = \fint_{|y|=|x|} \left( u_{\rm L} - \sum_k w_{\beta_k,\rm L}\right)(t,y) d\omega(y),\quad
\vec U_{\rm L} = \left(\begin{array}{c} U_{\rm L} \\ \partial_t U_{\rm L} \end{array}\right).
\end{equation}
It is well-known that $U_{\rm L}$ is a solution of the radial wave equation in $\RR^5$ and
our intention is to apply Proposition \ref{pr:ch} to $U_{\rm L}$.

Let
\[
\Psi 
= \sum_{k=1}^K (1-|\bl_k|^2)^{\frac 32}\lambda_k^\infty\Biggl( \sum_{m\neq k}\epsilon_{m}(\lambda_{m}^\infty)^{\frac 32}|\bs_{k,m}|^{-3} \Biggr)
\]
where for $k\neq m$, the value of $\bs_{k,m}$ is defined in \eqref{eq:bb}.

In the statement of Corollary \ref{th:dx}, we assume that $\Psi\neq 0$.
This allows us to obtain the following lower bound on the exterior energy for
$\vec U_L$.
\begin{lemma}\label{le:UL} 
Assume that $\Psi\neq 0$.
Then, for $R>0$ sufficiently large, it holds
\[
\|\pi_R^\perp \vec U_{\rm L}(t_R)\|_{(\dot H^1\times L^2)(|x|>R)} \gtrsim |\Psi| R^{-\frac52}.
\]
\end{lemma}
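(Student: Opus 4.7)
The plan is to follow the template of \cite[Section 6]{MaM2}, invoking Proposition \ref{pr:S1} (which, unlike its restricted counterpart used in \cite{MaM2}, now places no restriction on the parameters) as the input. Since $\chi_R\equiv 1$ on $|x|>R$, for such $x$,
\[
U_{\rm L}(t_R,x) = \fint_{|y|=|x|}\Bigl(u - \sum_k w_{\beta_k}\Bigr)(t_R,y)\, d\omega(y),
\]
and similarly for $\partial_t U_{\rm L}$. Applying the modulation decomposition $u = \bW + \ep$ of Lemma \ref{le:dc} at $t = t_R$, and noting that the choice $\beta_k = (\bl_k,\lambda_k(t_R),\by_k(t_R),\epsilon_k)$ forces $W_k(t_R) = w_{\beta_k}(t_R)$ and $-\bl_k\cdot\nabla W_k(t_R) = \partial_t w_{\beta_k}(t_R)$, one obtains
\[
\vec u(t_R) - \sum_k \vec w_{\beta_k}(t_R) \;=\; \sum_k c_k\,\vec v_k(t_R) + \vec\ep(t_R).
\]

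The contribution of $\vec\ep(t_R)$ is negligible in the far field. From the bootstrap bound $\cN(t_R)\lesssim t_R^{-5/2+2\delta}$ and the lower bound $\rho(t_R,x)\geq t_R^\alpha(R/L)^{1-\alpha}$ on $|x|>R$,
\[
\|\vec\ep(t_R)\|_{(\dot H^1\times L^2)(|x|>R)}^2 \lesssim t_R^{-\alpha}\,R^{-(1-\alpha)}\,\cN(t_R)^2,
\]
which with $R = t_R^{12/11}$ and $\alpha,\delta$ sufficiently small is $\ll R^{-5}$ (indeed the exponent is $-(67-\alpha-44\delta)/11 < -60/11$). Since spherical averaging is an $L^2$ contraction on each sphere (hence carries over to $(\dot H^1\times L^2)(|x|>R)$), the $\vec\ep$ contribution to $\|\vec U_{\rm L}(t_R)\|_{(\dot H^1\times L^2)(|x|>R)}^2$ is $\ll R^{-5}$, i.e. negligible compared to the target $|\Psi|^2 R^{-5}$.

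The main step is the analysis of the spherical average of $\sum_k c_k\,\vec v_k(t_R)$ on $|x|>R$, followed by the projection $\pi_R^\perp$. Because $R/t_R = t_R^{1/11}\to\infty$, we sit deep in the far-field regime $|\zeta_{\bl_k}|\gg t$, where the iterative construction of $v_{\bl_k}$ in Lemma \ref{le:as} admits, beyond the a priori bounds \eqref{eq:Av}--\eqref{eq:AE}, an explicit asymptotic expansion obtained by applying the retarded free-wave kernel to the source $f_{\bl_k}+g_{\bl_k}$. After incorporating the $\lambda_k$-scaling and $\by_k$-translation from $v_k$, performing the Lorentz change of variables $\zeta_{\bl_k}$, and averaging over the spheres $|y|=|x|$, one obtains a radial asymptotic expansion in inverse powers of $|x|$. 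The leading term lies in the plane $\mathrm{span}\{(|x|^{-3},0),(0,|x|^{-3})\}$ and is absorbed by $\pi_R^\perp$, while the next-order contribution has $(\dot H^1\times L^2)(|x|>R)$ norm comparable to $R^{-5/2}$. Summing in $k$ and using the definition of $c_k$ from Lemma \ref{le:in}, the scalar coefficient of this surviving profile is a nonzero absolute constant times the quantity $\Psi$ appearing in \eqref{eq:nv}, producing the announced lower bound.

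The main obstacle is extracting the far-field expansion of $v_{\bl_k}$ with enough precision to (i) verify that the $|x|^{-3}$ leading piece of the spherical average truly lies in the plane killed by $\pi_R^\perp$, and (ii) identify the scalar factor of the next-order term as the exact combination $\Psi$ from \eqref{eq:nv} rather than some other polynomial in the soliton parameters. This is precisely the content of the extended computation of \cite[Section 6]{MaM2}, which transfers verbatim to the present setting; the only new ingredient is Proposition \ref{pr:S1}, providing the multi-soliton for arbitrary parameters, with the bootstrap estimates used above in place of those of \cite[Proposition 5.1]{MaM2}.
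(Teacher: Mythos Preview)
Your proposal is correct and follows the same route as the paper, which likewise defers the detailed computation to \cite[Lemma~6.3]{MaM2}. One clarification: the paper records that $v_{\bl}$ itself already has far-field asymptotic $\approx C_{\bl}|x|^{-4}$ for $1\ll t\ll|x|$ (so there is no $|x|^{-3}$ leading piece in the first component; the $|x|^{-3}$ part you allude to lives only in the second component via the $t^{-2}\Lambda_k W_k$ term of $z_k$), and the denominator in your $\vec\ep$-exponent should be $12$ rather than $11$, though the conclusion $\ll R^{-5}$ is unaffected.
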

\begin{remark}
The lower bound in Lemma \ref{le:UL} is obtained thanks to the sharp computation of the
nonlinear interactions between the solitons.
Due to the addition of signed two by two contributions of all solitons to the nonlinear interaction,
there is a possibility of cancellation of the total contribution at
the main order, achieved if $\Psi=0$. In that case, we are not able to 
get information on $\|\pi_R^\perp \vec U_{\rm L}(t_R)\|_{(\dot H^1\times L^2)(|x|>R)}$
due to error terms.
Therefore, with this technique of proof, the condition $\Psi\neq 0$ is needed.
\end{remark}
\begin{proof}
We do not reproduce the proof of Lemma \ref{le:UL} since it is identical to the one of
\cite[Lemma 6.3]{MaM2}. We only recall that the lower bound on the projection of $\vec U_{\rm L}$
is not due to the solitons $W_k$ (since at the main order they have the exact decay $(|x|^{-3},0)$
which has a zero projection), but to the explicit asymptotic behavior of the function $v_\bl$ defined in Lemma~\ref{le:as} and used in the definition of the refined approximate solution $\vec \bW$
 in \eqref{eq:WW}.
Indeed, \cite[Lemmas 3.3 and 3.4]{MaM2} provide the main order of the asymptotic behavior of
$v_\bl$ of the form $v_\bl(t,x)\approx C_\bl |x|^{-4}$ for $1\ll t \ll |x|$,
for a non-zero constant $C_\bl$.
The radial function $|x|^{-4}$ has a nonzero projection $\pi_R^\perp$ with a size
in $\dot H^1(|x|>R)$ comparable to
\[
\biggl(\int_{|x|>R} |x|^{-10} dx\biggr)^\frac12 \approx C R^{-\frac 52}, 
\]
which justifies the lower bound in $R^{-\frac52}$.
\end{proof}
Using the method of channel of energy Proposition \ref{pr:ch} and the lower bound in Lemma \ref{le:UL},
we obtain that either
\[
\liminf_{t\to -\infty} \|\vec U_{\rm L}(t)\|_{(\dot H^1\times L^2)(|x|>R+|t-t_R|)} \gtrsim R^{-\frac 52}
\]
or
\[
\liminf_{t\to +\infty} \|\vec U_{\rm L}(t)\|_{(\dot H^1\times L^2)(|x|>R+|t-t_R|)} \gtrsim R^{-\frac 52} .
\]
By the definition of $\vec U_{\rm L}$ in~\eqref{defUL} and the decay properties of 
the solitons $W_k$ outside a large wave cone (at the main order solitons do not channel energy), we obtain that either
\[
\liminf_{t\to +\infty} \|\vec u_{\rm L}(t)\|_{(\dot H^1\times L^2)(|x|>R+|t-t_R|)} \gtrsim R^{-\frac 52} ,
\]
or
\[
\liminf_{t\to -\infty} \|\vec u_{\rm L}(t)\|_{(\dot H^1\times L^2)(|x|>R+|t-t_R|)}\\
\gtrsim R^{-\frac 52} .
\]
By~\eqref{reduc_lin} (\emph{i.e.}, comparing linear and nonlinear solutions), it follows that, for large $R$,
either 
\[
\liminf_{t\to +\infty} \|\vec u(t)\|_{(\dot H^1\times L^2)(|x|>R+|t-t_R|)} \gtrsim R^{-\frac 52} 
\quad \mbox{or}\quad
\liminf_{t\to -\infty} \|\vec u(t)\|_{(\dot H^1\times L^2)(|x|>R+|t-t_R|)} \gtrsim R^{-\frac 52} .
\]
However, the solution $\vec u(t)$ is a pure multi-soliton as $t\to +\infty$ and
thus, for any large $R$,
\[
\lim_{t\to +\infty} \|\vec u(t)\|_{(\dot H^1\times L^2)(|x|>R+|t-t_R|)} =0.
\]
Therefore, for large $R$,
\[
\liminf_{t\to -\infty} \|\nabla u(t)\|_{L^2(|x|>R+|t-t_R|)} +
 \|\partial_t u(t)\|_{L^2(|x|>R+|t-t_R|)} \gtrsim R^{-\frac 52} .
\]
By \cite[Proof of Proposition 4.1]{MaM2}, we obtain \eqref{reduction}.
For more details on the proof of inelasticity, we refer the reader to \cite[section 6]{MaM2}.


\begin{thebibliography}{10}
\bibitem{CoMM} R.~C\^ote, Y.~Martel and F.~Merle, 
Construction of multi-soliton solutions for the $L^2$-supercritical gKdV and NLS equations.
\emph{Rev. Mat. Iberoamericana} \textbf{27} (2011), 273--302.
\bibitem{CMYZ} R. Côte, Y. Martel, Y. Xu and L. Zhao,
Description and classification of 2-solitary waves for nonlinear damped Klein-Gordon equations. 
\emph{Commun. Math. Phys.} \textbf{388} (2021), 1557--1601.
\bibitem{CoMu} R.~Côte and C~Muñoz,
Multi-solitons for nonlinear Klein-Gordon equations.
\emph{Forum of Mathematics}, Sigma~\textbf{2} (2014).
\bibitem{DKM1} T.~Duyckaerts, C.~E.~Kenig and F.~Merle,
{Universality of blow-up profile for small radial type II blow-up solutions of the energy-critical wave equation.}
\emph{J. Eur. Math. Soc.} \textbf{13} (2011), 533--599.
\bibitem{DKM4} T.~Duyckaerts, C.~E.~Kenig and F.~Merle,
Classification of radial solutions of the focusing, energy-critical wave equation.
\emph{Cambridge Journal of Mathematics} \textbf{1} (2013), 75--144. 
\bibitem{DKM6} T.~Duyckaerts, C.~E.~Kenig and F.~Merle,
Solutions of the focusing nonradial critical wave equation with the compactness property.
\emph{Ann. Sc. Norm. Super. Pisa Cl. Sci. (5)} \textbf{15} (2016), 731--808.
\bibitem{DKM7} T.~Duyckaerts, C.~E.~Kenig and F.~Merle,
Soliton resolution for the radial critical wave equation in all odd space dimensions.
\emph{Acta Math.} \textbf{230} (2023), 1--92.
\bibitem{DJKM} T.~Duyckaerts, H.~Jia, C.~E.~Kenig and F.~Merle,
Soliton resolution along a sequence of times for the focusing energy critical wave equation.
\emph{Geom. Funct. Anal.} \textbf{27} (2017), 798--862.
\bibitem{Je19} J. Jendrej,
Construction of two-bubble solutions for energy-critical wave equations.
\emph{Am. J. Math.} \textbf{141} (2019), 55--118.
\bibitem{JeLa} J. Jendrej and A. Lawrie,
Two-bubble dynamics for threshold solutions to the wave maps equation.
\emph{Invent. Math.} \textbf{213}, (2018) 1249--1325.
\bibitem{JL25} J. Jendrej and A. Lawrie,
Soliton resolution for energy-critical wave maps in the equivariant case.
\emph{J. Am. Math. Soc.} \textbf{38}, (2025), 783--875.
\bibitem{KLLS} C.~E.~Kenig, A.~Lawrie, B.~Liu and W.~Schlag,
Channels of energy for the linear radial wave equation.
\emph{Adv. Math.} \textbf{285} (2015), 877--936.
\bibitem{KeLS} C.~E.~Kenig, A.~Lawrie and W.~Schlag,
Relaxation of wave maps exterior to a ball to harmonic maps for all data.
\emph{Geom. Funct. Anal.} \textbf{24} (2014), 610--647.
\bibitem{KeMe} C.~E.~Kenig and F.~Merle,
Global well-posedness, scattering and blow-up for the energy-critical focusing non-linear wave equation.
\emph{Acta Math.} \textbf{201} (2008), 147--212.
\bibitem{Ma05} Y.~Martel,
Asymptotic $N$-soliton-like solutions of the subcritical and critical generalized Korteweg-de Vries equations.
\emph{Amer. J. Math.} \textbf{127} (2005), 1103--1140.
\bibitem{MaM0} Y.~Martel and F.~Merle, 
Multi-solitary waves for nonlinear Schr\"odinger equations.
\emph{Annales de l'IHP (C) Non Linear Analysis} \textbf{23} (2006), 849--864.
\bibitem{MMan} Y.~Martel and F.~Merle,
Description of two soliton collision for the quartic gKdV equation.
\emph{Ann. Math.} (2) \textbf{174} (2011), 757--857.
\bibitem{MMin} Y.~Martel and F.~Merle,
Inelastic interaction of nearly equal solitons for the quartic gKdV equation.
\emph{Invent. Math.} \textbf{183} (2011), 563--648.
\bibitem{MaM1} Y.~Martel and F.~Merle,
Construction of multi-solitons for the energy-critical wave equation in dimension~5.
\emph{Arch. Ration. Mech. Anal.} \textbf{222} (2016), 1113--1160.
\bibitem{MaM2} Y.~Martel and F.~Merle,
Inelasticity of soliton collisions for the 5D energy critical wave equation.
\emph{Invent. Math.} \textbf{214} (2018), 1267--1363.
\bibitem{MaMT} Y.~Martel, F.~Merle and T.-P.~Tsai,
Stability and asymptotic stability in the energy space of the sum of $N$ solitons for subcritical gKdV equations.
\emph{Commun. Math. Phys.} \textbf{231} (2002), 347--373.
\bibitem{Pe11} G. Perelman,
Two soliton collision for nonlinear Schrödinger equations in dimension 1.
\emph{Ann. Inst. Henri Poincaré, Anal. Non Linéaire} \textbf{28} (2011), 357--384.
\bibitem{Yu19} X. Yuan, On multi-solitons for the energy-critical wave equation in dimension 5.
\emph{Nonlinearity} \textbf{32}  (2019), 5017--5048.
\bibitem{Yu21} X. Yuan,
Construction of excited multi-solitons for the 5D energy-critical wave equation.
\emph{J. Hyperbolic Differ. Equ.} \textbf{18} (2021), 397--434.
\bibitem{Yu22} X. Yuan,
Construction of excited multi-solitons for the focusing 4D cubic wave equation.
\emph{J. Funct. Anal.} \textbf{282} 60 p. (2022).
\end{thebibliography}
\end{document}